%% file: J_for_GLn_v4.tex
\documentclass[10pt]{article}

\usepackage{amsfonts}
\usepackage{amsmath}
\usepackage{amssymb}
\usepackage{amsthm}
\usepackage[alphabetic]{amsrefs}
\usepackage{calc}
\usepackage{graphicx}
\usepackage[hidelinks]{hyperref}
\usepackage[left=1in,top=1in,right=1in,foot=1in]{geometry}
\usepackage{todonotes}
\usepackage{microtype}
\usepackage{tikz-cd}

\input{preamble.tex}

\newcommand{\Hs}{\mathcal{H}}
\newcommand{\Morita}{\sim_{\mathrm{Morita}}}
\newcommand{\HHH}{\mathrm{HH}}
\newcommand{\CC}{\mathcal{C}}

\newcommand{\Vect}{\mathbf{Vect}}

\newcommand{\J}{\mathcal{J}}

\newcommand{\Par}{\mathrm{Par}}

\providecommand{\keywords}[1]{\textbf{\textit{Keywords---}} #1}

\title{On Braverman-Kazhdan's asymptotic Hecke algebra for inner forms of $\GL_n$}
\date{\today}
\author{Stefan Dawydiak \thanks{School of Mathematics and Statistics, University of Glasgow; email \texttt{stefan.dawydiak@glasgow.ac.uk}}}

\begin{document}
\maketitle

\begin{abstract}
We study Braverman-Kazhdan's asymptotic Hecke algebra $\J(G)$ for inner forms $G$ of $p$-adic $\GL_n$. We show that $\J(G)$ and the property for a $G$-representation to extend to a $\J(G)$-module are defined over $\bar{\Q_\ell}$, and hence make sense in the context of the categorical local Langlands correspondence. We show a rudimentary form of compatibility with Hecke operators, allowing us discuss stalks of sheaves on $\Bun_n$ corresponding to the trivial vector bundles on the stack of $L$-parameters, in particular the Whittaker sheaf, in terms of 
$\mathcal{J}(\GL_n)$-modules. 
We provide explicit formulas in terms of reductive centralizer of $L$-parameters for many functions in $\J(G)$, and show that $\J(G)$ has the same Hochschild homology as $C_c^\infty(G)$, and that the Kazhdan-Lusztig bijection appears in the isomorphism. We proceed via Bushnell-Kutzko and S\'{e}cherre-Stevens types, generalizing a theorem of Suzuki for $\GL_n$ for which we provide a proof.
\end{abstract}
\keywords{Asymptotic Hecke algebra, affine Hecke algebra, Bushnell-Kutzko type, S\'{e}cherre-Stevens type, categorical local Langlands correspondence}
%

\tableofcontents

\section{Introduction}

%
%

\subsection{Schwartz functions over $\bar{\Q_\ell}$}
Let $G_b$ be an inner form of $\GL_n$ over a non-archimedean local field $F$. 
The representation theory of such and more general groups was initiated in analogy with Harish-Chandra's theory of real reductive groups, but has since become increasing algebraic. More recently, the theory has become geometric, thanks to \cite{FS}, \cite{Zhu}. This last transformation requires\footnote{We are certainly in no position to have an opinion on the permanence of this state of affairs, but as we explain below, literally working with $\C$ would not on its own resolve the problem we are interested in.} realizing $G(F)$-representations on $\bar{\Q_\ell}$-vector spaces, as opposed to $\C$-vector spaces. Most algebraic aspects of the theory of $p$-adic groups can be transported by choosing any isomorphism $\bar{\Q_\ell}\simeq\C$, but the key parts of Harish-Chandra's original analytic theory, the tempered representations, cannot be. This is because different choices of isomorphism will induce a field automorphism $\gamma$ of $\C$, which will almost never preserve the archimedean absolute value on $\C$. For instance, if $f$ is an almost square-integrable or rapidly decaying function on $G(F)$, then $\gamma\circ f$ almost never is. 
On the other hand, the existence of a notion of temperedness in the geometric Langlands program raises the question of whether there is a reasonable notion of temperedness in the setting of the categorical local Langlands program, too. An equivalent question, by results of Schneider-Stuhler-Zink \cite{SchZnk}, \cite{SchStuhler}, is whether there is a reasonable notion of Harish-Chandra Schwartz algebra $\mathcal{C}(G)$ of rapidly-decaying function in the categorical setting. Naively, one expects such a notion to relate to the distinction between quasicoherent and ind-coherent sheaves on the stack of Langlands parameters.

The goal of the present paper is to explain that such a space of functions has in fact already been proposed in by Braverman-Kazhdan in \cite{BK}, and outline some consequences of this for inner forms $G_b$ of $G=\GL_n$. The Harish-Chandra Schwartz algebra consists of all matrix coefficients of discrete series representations, plus many functions arising from parabolic induction, \textit{i.e.} Eisenstein integrals. The proposal of \cite{BK} is in effect to take a space of all matrix coefficients of discrete series, plus far fewer functions arising by parabolic induction. We are able to provide formulas for many of these latter functions arising from parabolic induction of supercuspidal representations (as opposed to general discrete series representations) in spectral terms, by appealing to our formulas \cite{positivity}.
 
\subsection{Bushnell-Kutzko and S\'{e}cherre-Stevens types}
Our starting point is a theorem of Suzuki, stated for $\GL_n$ in \cite{suzuki}, where for each Bernstein block $\Rep(\GL_n)_\sS$, the corresponding direct summand $\J_\sS$ of $\J$ was related to the asymptotic Hecke algebra $J(H(\sS))$, now in the sense of Lusztig, corresponding to the affine Hecke algebra $H(\sS)$ obtained from Bushnell-Kutzko types. In the sequel, we provide a proof, valid for inner forms of $\GL_n$, of this theorem as 
\theoremstyle{theorem}
\newtheorem*{thm:kenta}{Theorem \ref{thm kenta}}
\begin{thm:kenta}[\cite{suzuki}, Thm. 3.2.]
Let $G$ be an inner form of $\GL_n$. 
Let $\Rep(G)_\sS$ be a Bernstein block, $H(\sS)$ the tensor product of type $\GL_r$ affine Hecke algebras supplied by Bushnell-Kutzko or S\'{e}cherre-Stevens, and $J(\sS)=J(H(\sS))$ and $C(\sS)$ the corresponding asymptotic Hecke algebra and Schwartz completion, in the sense of Lusztig and Opdam, respectively. Then there is a commutative diagram
\begin{equation}
\label{eqn BK summary diagram}
\begin{tikzcd}[row sep=large, column sep=large]
H(\sS)\otimes\End_\C(\rho_\sS) \arrow[r, hook, "\phi_{\sS,q}\circ{}^\dagger(-)\otimes\id"] \arrow[d, "\sim"] 
  & J(\sS)\otimes\End_\C(\rho_\sS) \arrow[r, hook, "\tilde{\phi}_{\sS,q}\otimes\id"] \arrow[d, "\eta\otimes\id" near start, "\sim", near end] 
  & \mathcal{C}(\sS)\otimes\End_\C(\rho_\sS) \arrow[d, "\sim"]  & T_w\otimes e \arrow[d, mapsto]
  \\
\Ee_{H(\sS)}\otimes\End_\C(\rho_\sS) \arrow[r, hook] \arrow[dd, bend right=75, "\Upsilon_{\Ee_H}" near end]
  & \Ee_{J(\sS)}\otimes\End_\C(\rho_\sS) \arrow[r, hook] \arrow[dd, bend right=75, "\Upsilon_{\Ee_J}" near end]
  & \Ee_{\mathcal{C}(\sS)}\otimes\End_\C(\rho_\sS) \arrow[dd, bend left=70, "\Upsilon_{\Ee_C}"] & \pi(T_w)\otimes e
\\
e_\sS\star \Hs(G)\star e_\sS \arrow[r, hook, crossing over] \arrow[d, "\sim"] \arrow[from=uu, bend right=80, "\Upsilon_H" near start, crossing over] 
  & e_\sS\star \J(G)\star e_\sS \arrow[r, hook] \arrow[d,"\sim"] \arrow[from=uu, bend right=75, "\Upsilon_{J(\sS)}" near start, crossing over]
  & e_\sS\star \mathcal{C}(G)\star e_\sS \arrow[d,"\sim"]\arrow[from=uu, bend left=75, crossing over, "\Upsilon"] & f\arrow[d, mapsto]
  \\
e_\sS\star\Ee_{\Hs}\star e_\sS \arrow[r, hook]
  & e_\sS\star\Ee_{\J}\star e_\sS \arrow[r,hook]
  & e_\sS\star\Ee_{\CC}\star e_\sS&\pi(f)
\end{tikzcd}
\end{equation}
in which all the morphisms from the rear pane to the front pane are isomorphisms and $\eta$ and $\tilde{\phi}_{\sS,q}$ are as in \cite{Plancherel}.
\end{thm:kenta}
The proof is mostly as envisaged by Suzuki \cite{SuzukiPersonal}, \textit{i.e.} via our results \cite{Plancherel}, \cite{rigid} proving Theorem \ref{thm kenta} for the principal block, but at one point we found ourselves requiring results \cite{SolleveldCompletion} of Solleveld, themselves depending on details of the constructions of the affine Hecke algebras $H(\sS)$. 

Theorem \ref{thm kenta} was proven for the principal block simultaneously by Bezrukavnikov-Karpov-Krylov in \cite{BKK}, but without providing formulas for elements of $\J_\sS=\J_{[1,\triv]}$ as functions on $G(F)$. We found such formulas necessary for our main applications of Theorem \ref{thm kenta}, the first of which is to give explicit formulas for many functions in $\J(G_b(F))$. Namely, for an inertial equivalence class $\sS=[L,\sigma]_{G_b}$ with type $(K_\sS,\rho_\sS)$, functions $f$ in the lowest two-sided Kazhdan-Lusztig cell of $J(G_b(F))_\sS$ act on (certain) representations whose discrete support is equal to $(L,\sigma)$ in the sense of the Langlands classification. For such functions $f\in\mathcal{J}(G_b)$ moreover in certain left cells inside the lowest cells, we give as Theorem \ref{thm formulas sc supp general} explicit formulas for $f(K_\sS gK_\sS)$ for $g$ corresponding to a dominant translation element of the affine Weyl group attached to $\sS$ by Bushnell-Kutzko and S\'{e}cherre-Stevens. Our formulas are in terms of the nilpotent cone of the reductive centralizer $S_\phi\subset\GL_n$ of the $W_F$-representation $\phi$ corresponding to $\sigma$ and the torsion number(s) of $\sigma$.

For our second application of Theorem \ref{thm kenta}, we show as Proposition \ref{thm J and Aut(C)} that the property for a smooth complex representation of $G_b(F)$ to extend from a module over the Hecke algebra $C_c^\infty(G_b(F))$ to a module over Braverman-Kazhdan's asymptotic Hecke algebra $\J(G_b(F))$ is stable under the natural action of $\Aut(\C)$, and that the same is true for $\J(G_b(F))$ itself. This last claim requires an inspection in Section \ref{subsection Aut(C) compat} of the compatibility of the constructions of Bushnell-Kutzko and S\'{e}cherre-Stevens, most importantly the simple characters, with the action of $\Aut(\C)$; overall we show that $\J(G_b(F))$ has a $\C$-basis of functions valued in algebraic integers, namely the matrix coefficients of cuspidal representations of finite groups of Lie type.

Stability for modules is essentially obvious from the Morita equivalence we describe for $\J(G(F))$, which for $\GL_n$ is itself obvious from Suzuki's theorem plus special properties of type $A$ affine Weyl groups. We include the details of this observation in Section \ref{subsection Aut(C) compat} mainly for other reasons: the details imply a description of the matrix coefficients of admissible $\J(G_b(F))$-modules: Let $\pi$ be such that all its irreducible subquotients are of the type considered in the preceeding paragraph and extending to an indecomposable $\J(G_b(F))$-module of length $n$. We show in Section \ref{subsection nondegenerate J-modules} that the matrix coefficients of $\pi$ as a $\J(G(F))$-module are spanned by products of the order $n-1$ derivatives of the characters $V(\lambda)$ of the irreducible rational $S_{\phi_\sS}$-representations, and matrix coefficients of commuting nilpotent $n\times n$ matrices. If $\pi$ admits a $\J$-central character, and is cyclic, the previous data is then indexed by the corresponding punctual Hilbert scheme of $S_{\phi_\sS}\git S_{\phi_\sS}$.

\subsection{Relation to the categorical local Langlands correspondence}

The stability of $\J(G_b(F))$-modules under the action of $\Aut(\C)$ induces a well-defined property of a lisse sheaf $\Ff$ on the stack of rank $n$ vector bundles on the Fargues-Fontaine curve, namely for the stalks $i_b^*\Ff$ to extend to $\J(G_b(F))$-modules. Our main examples of sheaves with this property are those corresponding to trivial vector bundles on the stack $\Par_G$ of Langlands parameters. Precisely, we show
\theoremstyle{theorem}
\newtheorem*{thm:Whitt}{Theorem \ref{thm J and Hecke stalks}}
\begin{thm:Whitt}
Let $G=\GL_n$. Let $\mathbb{L}_\psi$ be the Langlands functor \cite{FS} for a chosen Whittaker datum.
\begin{enumerate}
\item 
Let $V$ be a representation of $G^\vee$. Then $i_1^*\mathbb{L}_\psi^{-1}(V)=i_1^*T_Vi_{1!}W_\psi$ extends to a $\mathcal{J}(\GL_n)$-module.
\item
If $n=2$, then all stalks $i_b^* T_Vi_{1!}W_\psi$ extend to $\J(G_b)$-modules.
\end{enumerate}
\end{thm:Whitt}
That $\mathbb{L}_\psi$ is an equivalence is now a theorem for $G=\GL_n$ of Hansen-Mann \cite{HansenMann}, contingent on work in progress of Hamann-Hansen-Mann. However, as we recall in \ref{subsection relation to cLLC}, the very basic properties of $\mathbb{L}_\psi$ that we use can be phrased in terms of the unconditional functor $c_\psi$ of \textit{op. cit.}, and hold unconditionally.

Our proof uses three observations, beyond the reduction in 
\cite[\S 7]{HansenMann} to the setting of Ben-Zvi-Chen-Helm-Nadler's categorical Deligne-Langlands correspondence \cite{BZCHN}: the description of Chan-Savin \cite{ChanSavin} of the projection of the Whittaker representation to each Bernstein block in terms of the anti-spherical module over an affine Hecke algebra $H(\sS)$ as in \cite{HansenMann}, the fact that the antispherical module is the specialization at $\bq=q$ of a module over the version $\HH(\sS)$ of $H(\sS)$ with $\bq\in K_0(\pt/\Gm)$ a formal variable, and, much more significantly, the geometrization of induction and restriction between $\HH(\sS)$ and $J(\HH(\sS))[\bq^{\pm\frac{1}{2}}]$-modules proven by Propp as \cite[Cor. 5.3.14]{Propp}, which leads to
\theoremstyle{theorem}
\newtheorem*{thm:Hecke}{Proposition \ref{prop poor Hecke}}
\begin{thm:Hecke}
Let $V$ be a representation of $G^\vee$. Let $M_q$ be an $H$-module admitting a lift to an $\HH$-module $M$, \textit{i.e.} such that $\mathsf{S}_q(M_q)=i_q^*\mathsf{S}(M)$, where $\mathsf{S}$ and $\mathsf{S}_q$ are the equivalences of \cite{BZCHN}. Suppose that $M$ extends to a $J_N[\bq^{\pm\frac{1}{2}}]$-module, so that $M_q$ extends to a $J_N$-module. Then 
\[
\HomOver{\bangles{\mathcal{S}_q}}{\mathcal{S}_q}{V\otimes\mathsf{S}_q(M_q)}
\]
also extends to a $J_N$-module, where we view $V$ as a trivial bundle on $\Par_G^u$.
\end{thm:Hecke}
We regard this is a poor-man's compatibility of extension to a $\J(G_1)$-module with the action of the Hecke operators $T_V$. It was explained to us by Propp that there is a version of \cite[Cor. 5.3.14]{Propp} specialized at $q$, but relating not to the stack $\Par_G^u$ of unipotent Langlands parameters, but rather to the stack $\Par_G^u/\Gm$ of graded unipotent Langlands parameters, in the sense of \cite[\S 3.3, 3.4]{BZCHNIHES}. 

We remarked above that in the future it may be be possible to make sense of literal Harish-Chandra Schwartz functions on the automorphic side of \cite{FS}. We show in Corollary \ref{cor Whittaker} that the Whittaker representation over $\C$ does not extend to an abstract module over the Schwartz algebra, \textit{i.e.} is not tempered. Hence, even if available categorically, classical temperedness could not relate in a straightforward way to quasicoherence or perfection on the spectral side.

We point out also that Theorem \ref{thm J and Hecke stalks} and Proposition \ref{prop poor Hecke} hold for the principal block of general split groups.

An example of a sheaf whose stalks do not extend to $\J(G_b)$-modules is the constant sheaf $\underline{\bar{\Q_\ell}}_{\Bun_n}$, which should correspond to an ind-coherent but not quasicoherent sheaf on the space of parameters, witnessing the infinite cohomological amplitude of the Langlands functor. 

\subsection{Other applications}
In Section \ref{section further applications}, we give final applications are as follows: We show as Theorem \ref{thm HH} that the Hochschild and periodic cyclic homologies of $\J(G_b(F))$ and $C_c^\infty(G_b(F))$ agree, generalizing theorems of \cite{BDD} for the principal block and \cite{suzuki} for $b=1$ in the case of cocentres. We compute for $\J(G_b(F))$ using Morita equivalences coming from describing $\J(G_b(F))$ via types, after which we compare to Solleveld's computations \cite{SolleveldHochschild}. The isomorphism we obtain in Theorem \ref{thm HH} uses Kazhdan-Lusztig's bijection between two-sided cells of $\Waff(\GL_e)$ and conjugacy classes of $\Sn_e$ \cite{KLmap}.
In particular, the Hochschild homology of $\J(\GL_n(F))$ does not depend on $F$. This gives a conceptual explanation in type $\tilde{A}$ of Solleveld's result that the Hochschild homology of an affine Hecke algebra is independent of $q$. Solleveld's results hold for all root data, a generality we hope to match in future work, most immediately for equal-parameter affine Hecke algebras.

Relatedly, in \cite{Karemaker}, Karemaker studied when the Hecke algebras of $\GL_n(F)$ and $\GL_n(F')$ are isomorphic, or are Morita equivalent. For $n=2$, Karemarker showed one always has a Morita equivalence, but points out that that equivalences for $n>2$ seem unlikely; work of Yan \cite{Yan} heavily suggests that that the $Z(H(\sS))$-module structure on $\HHH_*(H(\sS))$---a derived invariant---depends on $F$. Note that the Hochschild cohomologies of $C_c^\infty(\GL_n(F))_\sS$ and $\J(\GL_n(F))_\sS$ differ for general $\sS$, so these two rings are not derived equivalent.

\subsection{Acknowledgements}

The author thanks Alexander Bertoloni-Meli, Alexander Braverman, Dinakar Muthiah, Oron Propp, and Maarten Solleveld for helpful conversations, Kenta Suzuki for the explanation \cite{SuzukiPersonal} (see Section \ref{subsection isomorphisms of asymptotic Hecke algebras}). The author thanks David Schwein for a helpful conversation leading to Proposition \ref{prop Fourier}. This research was partially supported by NSERC, and by the Engineering and Physical Sciences Research Council grant UKRI167 ``Geometry of double loop groups."

%


\section{Generalities on Hecke algebras, types, and Hecke algebras of types}

\subsection{Operator Paley-Wiener theorems}
\label{subsection Paley-Wiener}
\subsubsection{Tempered representations}
Let $G$ be an inner form a general linear group over a non-archimedean local field $F$. Therefore $G(F)=\GL_n(D)$ for an $F$-central division algebra $D$. As we do not present results for more general groups, there is no harm in fixing this restriction from the start, but the standard material recalled before Section \ref{subsections BK and SS types} holds for general connected reductive groups. Let $\Hs(G)=C_c^\infty(G(F))$ be the Hecke algebra of $G(F)$, and let $\CC(G(F))$ be the Harish-Chandra Schwartz algebra of rapidly-decaying functions on $G(F)$, see \cite[\S III.6]{Waldspurger} or \cite{HC} for the definition.

Following \cite[\S II.1]{Waldspurger}
Let $P_0$ be a minimal parabolic subgroup of $G$. Then there is an open compact subgroup  $K$ of $G(F) $such that $G(F)=P_0(F)K$. Let $v$ be unique $K$-fixed vector in the self-contragredient representation $i_{P_0}^G(\triv)$, let 
\[
\Xi(g)=\pair{v}{\pi(g)v}
\]
be the corresponding matrix coefficient. Now from \cite[\S III.2]{Waldspurger} we have
\begin{dfn}
\label{dfn tempered rep}
\begin{enumerate}
\item 
A smooth function $f$ on $G$ is \emph{tempered} if there is $C>0$ and $r\in\R$ such that
\[
|f(g)|\leq C\Xi(g)\left(1+\log\norm{g}\right)^r,
\]
where $\norm{g}\geq 1$ is defined as in \cite[p. 242]{Waldspurger}.
\item
A smooth admissible representation $\pi$ of $G(F)$ is \emph{tempered} if all its matrix coefficients are tempered.
\end{enumerate}
\end{dfn}
Let $\Rep(G)$ denote the abelian category of smooth representations of $G(F)$ with $\C$-coefficients, let $\Rep(G)^{\mathrm{adm}}$ be the subcategory of admissible representations, and $\Rep(G)^{\mathrm{adm},~\mathrm{temp}}$ be the abelian category of admissible tempered representations of $G(F)$.
It is consistent with Definition \ref{dfn tempered rep} to define $\Rep(G)^{\mathrm{temp}}$ to be the category of nondegenerate abstract $\CC(G)$-modules \cite{SchStuhler}.

Recall that given a parabolic subgroup $P$ of $G$ with Levi subgroup $M_P$, $\omega$ an essentially-square integrable representation of $M_P$ and $\nu$ an unramified character of $M_P(F)$, all the twists
\begin{equation}
\label{eqn unram twist}
\pi(\nu)=i_P^G(\omega\otimes\nu)
\end{equation}
can be realized on the same vector space. Moreover, the space $\X(M_P)$ of unramified characters of $M_P(F)$ has a well-known structure of a complex variety. Hence it makes sense to discuss dependence of an endomorphism of $\pi(\nu)$ on $\nu$, and to discuss regularity of this dependence. The space of unitary unramified characters is a real submanifold of $\X(M_P)$.

As explained in \cite{Waldspurger}, the Fourier transform $f\mapsto\pi(f)$ defines an endomorphism of $\pi$ for every $f\in\CC(G)$ and every tempered representation $\pi$, where $\pi(f)$ is given by 
\begin{equation}
\label{eqn Fourier transform definition}
\pi(f)v=\dIntOver{G(F)}{f(g)\pi(g)v}{g},~~~v\in\pi
\end{equation}
interpreted as usual. 

\subsubsection{Payley-Wiener theorems}

The Plancherel theorem is the statement (see \cite[Thm. VIII.1.1]{Waldspurger} and also \cite[Thm. 8.9]{SchZnk}) that $f\mapsto\pi(f) $ defines an isomorphism of rings
\[
\CC(G)\to\mathcal{E}_\CC(G),
\]
where $\mathcal{E}_\CC(G)$ is the subring of the endomorphism ring
of the forgetful functor 

\[
\Rep(G)^{\mathrm{adm},~\mathrm{temp}}\to\Vect_\C
\]
defined by the following conditions:
\begin{enumerate}
\item[PW1.] 
For all $\pi=\Ind_P^G(\nu\otimes\omega)$, the endomorphism
$\eta_\pi=\eta_{\nu,\omega}$ is a smooth function of the unramified unitary character $\nu$
and $\omega\in\Ee_2(M_P)$;
\item[PW2.]
The endomorphism $\eta_{\pi}$ is biinvariant with respect to some open 
compact subgroup of $G$.
\end{enumerate}

Define the subring $\Ee_{\Hs}(G)$ of the endomorphism ring
of the forgetful functor 
\[
\Rep(G)^{\mathrm{adm}}\to\Vect_\C
\]
by replacing, in PW1 above, unitary characters with 
all unramified characters, ``smooth" with "algebraic", and adding
\begin{enumerate}
\item[PW3.] 
The endomorphisms $\eta_\pi$ are compatible with supercuspidal support, \textit{i.e.} if $(M_1, \sigma_1)$ is the supercuspidal support of $\pi$, so that
\[
\pi=i_P^G(\omega\otimes\nu)\into\pi_1=i_{P_1}^G(\sigma_1\otimes\nu_1),
\]
then $\eta_{\pi_1}$ preserves $\pi$ and $\eta_{\pi_1}|_{\pi}=\eta_{\pi}$.
\end{enumerate}
The matrix Paley-Wiener theorem of Bernstein \cite{Bernstein} says that $f\mapsto\pi(f)$ is an isomorphism from the full Hecke algebra of $G$ onto $\Ee(G)$. 

\subsection{Braverman-Kazhdan's asymptotic Hecke algebra}

To define the subring $\Ee_\J$ of $\Ee_\mathcal{C}$, we need another definition.
\begin{dfn}
Let $P$ be a parabolic subgroup of $G$ with Levi factor $M$ and radical $N$.
\begin{enumerate}
\item 
An unramified character $\nu\colon M(F)\to\C^\times$ is \emph{non-strictly positive} if for all root subgroups $N_\alpha\subset N$, we have $|\nu(\alpha^\vee(x))|_\infty\geq 1$ for $|x|_F\geq 1$. We say that $\nu$ is \emph{strictly positive} if we have $|\nu(\alpha^\vee(x))|_\infty> 1$ for $|x|_F> 1$, where we write $|-|_\infty$ for the archimedean absolute value.
\item
If $\omega$ is an essentially tempered representation of $M$ and $\nu$ is a strictly positive unramified character, then the representation $i_P^G(\omega\otimes\nu)$ is a \emph{standard representation}.
\end{enumerate}
\end{dfn}
More generally, if $\nu$ is nonstrictly positive and $\omega$ is essentially discrete series, then $i_P^G(\omega\otimes\nu)$ is a direct sum of standard representations. The Langlands classification classifies all irreducible admissible representations in terms of the standard representations. 
Now \cite{BK} define the subring $\Ee_\J$ of $\Ee_\CC$ by again replacing in PW1 above with 
\begin{enumerate}
\item[PW1'.]
The endomorphism $\eta_\pi$ extends to a rational function of $\nu$, with no poles at $\nu$ non-strictly positive.
\end{enumerate}

Braverman-Kazhdan then define the intermediate ring $\J(G)$ by
\begin{dfn}
\label{dfn big J}
\[
\J(G):=\sets{f\in\CC(G)}{\pi(f)\in\Ee_\J}\subset\CC(G).
\]
\end{dfn}
Note that, by definition, $\Hs(G)\subset\J(G)$. Braverman-Kahzdan's definition makes $\J(G)$ an algebraic analogue of $\CC(G)$. We shall show a slighter stronger version of this algebricity in Section \ref{subsection Aut(C) compat}. In \cite{BK} it was assumed that $G$ was split over $F$, but this is not required for Definition \ref{dfn big J}.

We do not impose compatibility PW3 with supercuspidal support in the definition of $\Ee_J$. This means for example that regular endomorphisms supported on the parabolic induction of a non-supercuspidal discrete series representation belong to $\Ee_J$ but not to $\Ee_\Hs$, even though they have no poles. In particular, both $\J(G)$ and $\CC(G)$ admit direct sum decompositions 
\[
\J(G)=\bigoplus_{(M, \mathfrak{o}_\omega)}\J(G)_{(M, \mathfrak{o}_\omega)},~~
\CC(G)=\bigoplus_{(M, \mathfrak{o}_\omega)}\CC(G)_{(M, \mathfrak{o}_\omega)},~~
\]
indexed by Harish-Chandra blocks, \textit{i.e.} pairs $(M,\mathfrak{o}_\omega)$ up to conjugacy, where $M$ is a Levi subgroup of $G$ and $\mathfrak{o}_\omega$ is the orbit of an essentially square-integrable representation of $M(F)$ under $\X(M)_{\mathrm{un}}$. As pointed out in \cite{suzuki}, this follows from the definition of $\J$, and is proven in \cite{Waldspurger} for $\CC(G)$ (of course, both decompositions are implied by the Plancherel formula). This decomposition of $\J$ refines the Bernstein decomposition, but the natural decomposition of $\J$ is actually intermediate between the two, a coarsening of the decomposition into Harish-Chandra blocks, already in the case of the principal Bernstein block. That is, a nilpotent conjugacy class $N$ for $G^\vee$ can appear in the $L$-parameters of representations induced from essentially discrete series representations of two non-conjugate Levi subgroups $M,M'$, see \cite[\S 4.4.3]{rigid} for an example. However, this distinction will not appear for inner forms of $\GL_n$.

By definition, the action of $\J(G)$ on standard $G$-representations is well-defined. In fact, we have
\begin{theorem}[\cite{suzuki}, Prop. 4.3]
\label{thm Suzuki simples}
Let $G$ be split over $F$. Then the simple $\J(G(F))_{(M,\mathfrak{o}_\omega)}$-modules are precisely the standard modules with discrete support $(M,\mathfrak{o}_\omega)$.
\end{theorem}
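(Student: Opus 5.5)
We prove Theorem \ref{thm Suzuki simples} by showing that the Harish-Chandra summand $\J(G(F))_{(M,\mathfrak{o}_\omega)}$ is, via the Fourier transform, an algebra of rational matrix-valued functions on the orbit $\mathfrak{o}_\omega$, regular on a neighbourhood of the non-strictly positive region, and equivariant for the relevant finite group. Its simple modules then come from the points of that region. Concretely, write $\pi(\nu)=i_P^G(\omega\otimes\nu)$ for $\nu$ in the non-strictly positive cone, and let $V$ be the common underlying space. Definition \ref{dfn big J} together with PW1', PW2 gives a map
\[
\J(G)_{(M,\mathfrak{o}_\omega)}\to \prod_{\nu}\End(\pi(\nu)^{K'}),
\]
one factor for each sufficiently small open compact $K'$. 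By the Plancherel theorem for $\CC(G)$ this map is injective, with image consisting of families $\nu\mapsto\eta_\nu$ that are rational in $\nu$, regular on the non-strictly positive locus, and intertwined by the normalized intertwining operators attached to $W(M,\mathfrak{o}_\omega)$.

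\textbf{Standard modules are modules, and they are simple.} By definition each standard module $\pi(\nu)$ carries a $\J$-action, given by evaluation at $\nu$. The key point is to show that evaluation at a fixed non-strictly positive $\nu$ maps $e_{K'}\J e_{K'}$ \emph{onto} $\End(\pi(\nu)^{K'})^{\mathrm{Stab}}$, the commutant of the stabilizer's intertwining operators. This gives simplicity, since a standard module with $\nu$ strictly positive has trivial stabilizer. To prove surjectivity we localise. Near $\nu$ we choose rational functions regular in a neighbourhood of the closed positive cone. We then average them over $W(M,\mathfrak{o}_\omega)$ using the normalized intertwining operators, which are regular and invertible on the unitary axis. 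Finally we check, using the Plancherel inversion formula, that the resulting family is the Fourier transform of an element of $\CC(G)$; this element lies in $\J$ by construction. For $G$ split, an alternative route is to reduce to affine Hecke algebras via types. There the statement becomes Lusztig's result that the simple $J$-modules are the standard modules $E_{u,s,\rho}$ with tempered part fixed, and the positivity cone corresponds to the appropriate dominance condition.

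\textbf{Every simple module is standard.} Let $E$ be a simple $\J_{(M,\mathfrak{o}_\omega)}$-module. The center of the summand contains the $W$-invariant rational functions on $\mathfrak{o}_\omega$ that are regular on the non-strictly positive region. We first show that these act on $E$ by a character, by a Schur-lemma argument for countable-dimensional algebras, in the spirit of Bernstein. Every maximal ideal of this center corresponds to a point $\nu$ of the region, since the region is the spectrum of this localized ring modulo $W$. Hence $E$ factors through the fibre algebra at $\nu$. That fibre algebra is a matrix algebra $\End(\pi(\nu)^{K'})^{\mathrm{Stab}}$, so $E$ is a summand of $\pi(\nu)$, that is, a standard module.

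The main obstacle is the surjectivity-at-a-point step. It requires constructing elements of $\J$, and not merely of $\CC$, with prescribed value at a positive point. This is exactly where rationality has to be reconciled with the rapid-decay condition, and where knowledge of the poles of the Plancherel density and intertwining operators is needed. I would attempt it first via the type-theoretic reduction to Lusztig's $J$, which handles this step for general split $G$ by his explicit description.
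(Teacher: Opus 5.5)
The paper does not prove this statement for general split $G$. It quotes \cite{suzuki}, and reproves it only for $\GL_n$ and its inner forms (Corollary \ref{cor simple J modules}), by what you call the alternative route:
Theorem \ref{thm kenta} identifies $e_\sS\star\J(G)\star e_\sS$ with $J(\sS)\otimes\End_\C(\rho_\sS)$ compatibly with Fourier transforms; Theorem \ref{thm SolleveldCompletion} matches standard modules on the two sides; and Lusztig's classification of simple $J$-modules for formal $\bq$ \cite{cellsIV} is specialized via \cite[Cor. 2.6]{BK}.

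You cannot fall back on that route ``for general split $G$'', as you propose for the surjectivity step. For general split groups the Hecke algebras of types can have unequal parameters (cf.\ \cite{AFMO}), where Lusztig's $J$ is only conditionally defined. The matching of standard modules is also not established there (see the remark after Theorem \ref{thm SolleveldCompletion}). Even for the principal block, identifying the Paley--Wiener algebra with Lusztig's $J$ is itself the deep input from \cite{Plancherel}, \cite{rigid}. So in the stated generality your direct Paley--Wiener argument has to carry the proof, and as written it has two genuine gaps.

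\textbf{Averaging does not land in $\Ee_\J$.} The normalized operators $A(w,\nu)$ are invertible on the unitary axis, but not at reducibility points \emph{inside} the non-strictly positive region. For $\GL_2$ at $\nu=\delta_B^{1/2}$, every intertwiner $i_B^G(\nu)\to i_B^G(s\nu)$ has image the trivial Langlands quotient, so conjugation by $A(s,\nu)$ is singular there. Hence $\sum_w A(w,\nu)^{-1}\eta(w\nu)A(w,\nu)$ acquires poles exactly where the theorem has content, violating PW1'. To repair this you must:
\begin{enumerate}
\item choose $\eta$ vanishing to high order along the $w$-translates ($w\neq 1$) of these loci;
\item impose PW1' for every parabolic with Levi $M$, not just $W(M,\mathfrak{o}_\omega)$-equivariance. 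For $M=\GL_2\times\GL_1\subset\GL_3$ that group is trivial, yet $P$ and $\bar{P}$ both constrain the poles.
\end{enumerate}

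\textbf{The fibre is not the evaluation image.} In the converse you identify the fibre $\J/\mathfrak{m}\J$ with the evaluation image $\End(\pi(\nu)^{K'})^{\mathrm{Stab}}$, but these differ in general. The evaluation image is a product of matrix algebras, one per standard summand, whereas the fibre can be non-semisimple. At a point with non-trivial $R$-group, e.g.\ the unramified quadratic character for $\SL_2$, equivariant families are locally $\C[x^2]\cdot\mathrm{diag}\oplus x\C[x^2]\cdot\mathrm{offdiag}$. The fibre of this is four-dimensional with a square-zero two-dimensional radical, while the evaluation image is $\C\times\C$.

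What you actually need is that $\ker(\mathrm{ev}_\nu)$ is nilpotent modulo $\mathfrak{m}\J$. That requires two inputs:
\begin{enumerate}
\item A proof that $e_{K'}\star\J\star e_{K'}$ is finitely generated over $\C[\mathfrak{o}_\omega]^{W}$, i.e.\ that equivariance bounds pole orders. Your countable-dimension Schur lemma needs this too, since rational functions merely regular on the positive region span an uncountable-dimensional space.
\item An Artin--Rees/Chevalley comparison of the vanishing-order filtration at $\nu$ with the $\mathfrak{m}$-adic filtration.
\end{enumerate}
Neither appears in your proposal; they are exactly what the paper sidesteps by importing Lusztig's theorem through Theorem \ref{thm kenta}.
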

We will reprove the theorem for $\GL_n$, and also prove it for its inner forms as Corollary \ref{cor simple J modules}.
\begin{ex}
\label{rem sc H=J}
Matrix coefficients of supercuspidal representations belong to $\J(G)$ and are, of course, compactly-supported modulo $Z(G)$. In fact, when $P=G=M$, condition PW1' becomes equal the corresponding condition for $\Hs(G)$, thanks to the definition of non-strict positivity. Therefore if $\sS=[G,\sigma]_G$ labels a supercuspidal block of $G$, we have $e_\sS\star\Hs(G)\star e_\sS=e_\sS\star\J(G)\star e_\sS$. (C.f. Example \ref{ex algebraic H=J sc}.)

In particular, we have $\Hs(G)=\J(G)$ for $G$ a torus or $G(F)=D^\times$ a division algebra. 
\end{ex}

\subsubsection{Characterization of tempered representations}
Recall that $\mathcal{C}(G(F))$ is an ind-Fr\'{e}chet algebra; for every open compact subgroup $K$, the $K$-biinvariant Schwartz functions $\mathcal{C}(G(F))^K$ form a Fr\'{e}chet algebra. For the coming corollary only, we equip $\J\subset\CC$ with the subspace topology, making $J^K$ a locally-convex topological algebra.
\begin{cor}
The finite-dimensional continuous $\J(G(F))$-modules $V$ are precisely the tempered admissible $G(F)$-representations $(\pi, V)$ with respect to the finest locally convex topology on $V$.
\end{cor}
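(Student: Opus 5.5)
The plan is to deduce the statement from the corresponding characterization for $\CC(G(F))$ due to Schneider--Stuhler--Zink \cite{SchZnk}, \cite{SchStuhler}, together with the inclusions $\Hs(G)\subset\J(G)\subset\CC(G)$. The key input is that $\Hs(G)$ is dense in $\CC(G)$, and in fact $\Hs(G)^K$ is dense in the Fr\'echet algebra $\CC(G)^K$ for each open compact $K$. Hence $\J(G)^K$, which is sandwiched between them, is dense in $\CC(G)^K$ in the subspace topology.

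\emph{Tempered implies continuous $\J$-module.} Let $(\pi,V)$ be tempered admissible and finite-dimensional as a module; concretely, fix $K$ with $e_K$ acting and $V=V^K$ finite dimensional, or work one $K$-isotypic piece at a time. I would proceed in three steps.
\begin{enumerate}
\item By Definition \ref{dfn tempered rep} and \eqref{eqn Fourier transform definition}, $\pi(f)$ is defined for all $f\in\CC(G)$, and $f\mapsto\pi(f)$ is continuous on $\CC(G)^K$. This is the standard estimate: matrix coefficients are bounded by $\Xi\,(1+\log\norm{g})^r$, and Schwartz functions decay faster than any such weight against $\Xi$.
\item Restricting along $\J(G)\subset\CC(G)$ gives a continuous $\J(G)$-module structure.
\item Every linear map out of $V$ with the finest locally convex topology is continuous, so continuity of the action map on $V$ is automatic.
\end{enumerate}

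\emph{Continuous $\J$-module implies tempered.} Let $V$ be a finite-dimensional continuous $\J(G(F))$-module. I would proceed as follows.
\begin{enumerate}
\item Restrict to $\Hs(G)\subset\J(G)$. Since $\J(G)$ has local units $e_K\in\Hs(G)$, nondegeneracy is preserved, so $V$ is a smooth admissible $G(F)$-representation.
\item Since $\J(G)^K$ is dense in $\CC(G)^K$ and $\End(V)$ is finite dimensional, hence complete, the continuous action of $\J(G)^K$ extends uniquely by continuity to $\CC(G)^K$.
\item Check that these extensions are compatible as $K$ shrinks and that the resulting action is multiplicative. Both follow by density and continuity of multiplication in $\CC(G)^K$.
\item This makes $V$ a nondegenerate continuous $\CC(G)$-module. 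By \cite{SchStuhler}, as recalled after Definition \ref{dfn tempered rep}, $V$ is then tempered.
\end{enumerate}
Alternatively, for the last step one can show directly that continuity of the matrix coefficient functionals $f\mapsto\pair{\check v}{\pi(f)v}$ on $\CC(G)^K$ forces the matrix coefficients to be tempered, using the dual description of $\CC(G)$.

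The main obstacle is the density of $\Hs(G)^K$ in $\CC(G)^K$, together with the precise form of the Schneider--Stuhler characterization: that continuous finite-type $\CC$-modules are exactly the tempered ones. I would cite both from \cite{SchZnk} and \cite{Waldspurger}, and not reprove them. A smaller point is that "finite-dimensional" should be read $K$-wise, via admissibility. I would handle this by applying the argument to each $e_K V$, which is a finite-dimensional $e_K\J e_K$-module.
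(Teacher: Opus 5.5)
Your proposal is correct and follows essentially the same route as the paper: density of $\Hs(G(F))^K\subset\J(G(F))^K$ in the Fr\'echet algebra $\CC(G(F))^K$, extension of the action by completeness of the finite-dimensional matrix algebra, and the Schneider--Stuhler identification of finite-dimensional $\CC(G(F))^K$-modules with $K$-fixed vectors of tempered admissible representations. For the easy direction the paper cites Schneider--Stuhler's automatic continuity where you sketch the estimate directly, and your checks of compatibility as $K$ shrinks and of multiplicativity of the extension are details the paper leaves implicit.
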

\begin{proof}
Recall from \cite[Prop. 1]{SchStuhler} that finite-dimensional abstract $\mathcal{C}(G)^K$-modules are automatically continuous, and are exactly the $K$-fixed vectors of admissible tempered representations. Hence tempered admissible $G(F)$-representations restrict to continuous $\J(G(F))$-modules. 

Conversely, let $V$ be a $\J(G(F))$-module such that the structure map 
\[
\J(G(F))^K\times V^K\to V^K
\]
is separately continuous, for the finest locally convex topology on $V$. As in the proof of \cite[Prop. 1]{SchStuhler}, after choosing a basis, this amounts to continuity of the resulting map 
\[
\J(G(F))^K\to\Mat_{\dim V^K}(\C)
\]
for the norm topology on the matrix algebra.
Now, $\J^K$, containing $\Hs(G(F))^K$, is dense in $\mathcal{C}(G(F))^K$. Therefore the structure map extends (continuously) to $\mathcal{C}(G(F))^K$ by completeness of the matrix algebra.
\end{proof}
\begin{rem}
The proof of automatic continuity of $\mathcal{C}(G(F))$-modules in \cite{SchStuhler} rests on the fact that $\mathcal{C}(G(F))^K$ is Fr\'{e}chet, and, by \cite{VignerasPS}, has open unit group. There are still some automatic continuity results for locally-convex topological algebras with open unit group, but note that already for the Iwahori summand  $\J^I=J(T,\triv)$ for $G=\SL_2$, $(\J^I)^\times\subsetneq J\cap \mathcal{C}(T,\triv)^\times$. Indeed, the function 
\[
1-q^{-1}T_{\alpha^\vee}
\]
has inverse
\[
(1-q^{-1}T_{\alpha^\vee})^{-1}=1+q^{-1}T_{\alpha^\vee}+q^{-2}T_{2\alpha^\vee}+\cdots
\]
in $\CC$. The formulas in \cite{SL2} show that the above function does not lie in $J$. (Recall from the appendix of \cite{BK} that $t_1$ and any non-compactly supported function in $J_0$ spans $J/H$.)
\end{rem}


\subsection{Affine and asymptotic affine Hecke algebras}
\subsubsection{Affine Hecke algebras}
\label{subsubsection AHA conventions}
Let $\mathcal{R}=(X^*, \Phi, X_*,\Phi^\vee)$ be an irreducible root datum and consider the extended affine Weyl group $\Waff=W(\mathcal{R})\ltimes X_*$, where $W(\mathcal{R})$ is the finite Weyl group of $\mathcal{R}$. Let $\HH(\mathcal{R})$ be the affine Hecke algebra associated to $\mathcal{R}$, over the ring $\Z[\bq^{\pm\frac{1}{2}}]$ of formal Laurent polynomials. We write $\sett{T_w}$ for the standard basis. The multiplication in $\HH(\mathcal{R})$ is determined by the quadratic relation $(T_s+1)(T_s-\bq)=0$ for $s\in S_{\mathrm{aff}}$ an affine simple reflection and the relation $T_wT_{w'}=T_{ww'}$ when $\ell(ww')=\ell(w)+\ell(w')$ where $\ell$ is the length function on $\Waff$. Let 
$\sett{C_w}$ and $\sett{C'_w}$ be the Kazhdan-Lusztig bases of $\HH$ \cite{KL79}, and consider the involution of $\HH$ given by ${}^\dagger T_w=(-1)^{\ell(w_f)}\bq^{\ell(w)}T_{w^{-1}}^{-1}$; it obeys ${}^\dagger C_w=(-1)^{\ell(\omega(x)_f)+\ell(x)}C'_w$ \cite[Lem 1.10]{Plancherel}. 
Here, for $x\in \Waff$, $\omega(x)\in \pi_1(G)$ labels the $W_f\ltimes\Z\Phi^\vee$-coset of $\Waff$ containing $x$ and we write $\omega(x)=\omega(x)_f\omega(x)_t\in W\ltimes X_*$. 

Let $h_{x,y,z}$ be the 
structure constants for the $C_w$-basis. Either Kazhdan-Lusztig basis induces the same notion of cells in $W$: we
say that $x\leq_L y$ if 
$C'_x$ appears in $hC'_y$ for some $h$, and similarly for the relation $x\leq_Ry$. The equivalence classes
induced by the transitive closures $\sim_L$, $\sim_R$ of these relations are the left and right cells of $W$. The two sided
cells are given by the coarser relation where $x\leq_{LR}y$ if $x\leq_L y$ or $x\leq_R y$. Recall that each one-sided cell contains a unique distinguished involution \cite[Thm. 1.10]{cellsII}, and write $\mathcal{D}$ for the (finite) set of distinguished involutions.

\subsubsection{The affine asymptotic Hecke algebra according to Lusztig}
Let 
\[
J=\bigoplus_\cc J(\mathcal{R})_\cc
\]
be Lusztig's asymptotic Hecke algebra with basis $\{t_z\}$ associated to $\HH(\mathcal{R})$ as in \cite{cellsII} with its direct sum decomposition by two-sided cells $\cc$. Let 
\[
\HH(\mathcal{R})\into J(\mathcal{R})\left[\bq^{\pm\frac{ 1}{2}}\right]
\]
be Lusztig's homomorphism \cite{cellsII}), given by
\[
\phi(C_w)=\sum_{\substack{d\in \mathcal{D}\\ z\sim_L d}}h_{w,d,z}t_z.
\]
If $q^{1/2}>1$ is a real number, we write $H(\mathcal{R})=\HH(\mathcal{R})|_{\bq^{1/2}=q^{1/2}}$ and $\phi_q$ for the corresponding specialization. The map $\phi$ and all its specializations $\phi_q$ are injective \cite[Prop. 1.7]{cellsIII}. For a two-sided cell $\cc$, we write $\phi_\cc$ for the composite of $\phi$ with projection onto $J_\cc$.

The map $\phi$ is an isomorphism after a mild completion \cite{cellsII}, such that one has
\[
(\phi\circ{}^\dagger(-))^{-1}(t_w)=\sum_{x}a_{x,w}T_x.
\]
By \cite{Plancherel}, 
\[
a_{x,w}\in\frac{1}{P_{W(\mathcal{R})}(\bq)^N}\Z\left[\bq^{\pm\frac{1}{2}}\right],
\]
where $N$ depends only on $\mathcal{R}$ and $P_{W(\mathcal{R})}(\bq)$ is the Poincar\'{e} polynomial of $W(\mathcal{R})$.

The following example occurs when considering Bernstein blocks with supercuspidal support $G$ itself.
\begin{ex}
\label{ex algebraic H=J sc}
If $\mathcal{R}=(X^*,\emptyset, X_*,\emptyset)$, then in $H(\mathcal{R})=\C[X_*]$, we have $T_w=T_\lambda=C'_w=C'_\lambda$ for all $w=\lambda\in\Waff=X_*$. In particular, the only involution in $X_*$ is the identity, and $H(\mathcal{R})=J(\mathcal{R})$ with $T_w=t_w$.
\end{ex}

\subsubsection{The affine asymptotic Hecke algebra according to \cite{BK}}
The Schwartz completion $\CC(H(\mathcal{R}))$ of $H(\mathcal{R})$ and the Plancherel formula can be stated internally to the setting of affine Hecke algebras \cite{OpdamSpectral}. Bernstein's Paley-Wiener theorem holds, with condition (3) of the Paley-Wiener theorem of Section \ref{subsection Paley-Wiener} translating to the condition that $h\in H$ is determined by $\pi(h)$ for $\pi$ the principal series; see \cite[\S 1.1, \S 2.3]{Plancherel} for details.

In this way Braverman-Kazhdan defined the ring $\mathcal{E}_{J(\mathcal{R})}$ of endomorphisms satisfying the Hecke algebra analogues of PW1', PW2 of \ref{subsection Paley-Wiener}, (in \cite{BK} and \cite{Plancherel}, the language of Iwahori-spherical representations was used). Additionally, they produced the map $\eta$ in the diagram 
\begin{equation}
\label{eqn BK summary diagram Hecke only}
\begin{tikzcd}
H(\mathcal{R})\arrow[dd, "\sim"]\arrow[dr, hook, "\phi_q\circ{}^{\dagger}(-)"]\arrow[rr, hook]&&\mathcal{C}(\mathcal{R})\arrow[dd, "\sim"]\\
&J(\mathcal{R})\arrow[ur, "\tilde{\phi}"]\arrow[d, "\eta"]&\\
\mathcal{E}_{H(\mathcal{R})}\arrow[r, hook]&\mathcal{E}_{J(\mathcal{R})}\arrow[r, hook]&\mathcal{E}_{\mathcal{C}(\mathcal{R})},
\end{tikzcd}
\end{equation}
\begin{theorem}[\cite{Plancherel}, \cite{rigid}, \cite{BKK}]
\label{thm BK summary Hecke only}
We have
\begin{enumerate}
\item
The map $\eta$ is an isomorphism.
\item
The diagram \eqref{eqn BK summary diagram Hecke only} commutes, so that 
\[
f_w=\sum_{x}a_{x,w}(q)T_x\in\CC(\mathcal{R})
\]
is the the inverse Fourier transform of $\eta(t_w)$.
\end{enumerate}
\end{theorem}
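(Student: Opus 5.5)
The statement is a summary of results from \cite{Plancherel}, \cite{rigid} and \cite{BKK}. My plan is to assemble it from three ingredients: the Paley--Wiener description of $H(\mathcal{R})$ and $\CC(\mathcal{R})$, Lusztig's map $\phi_q$, and the identification of $J(\mathcal{R})$-modules with standard modules. First I will construct $\eta$. Every tempered or standard $H(\mathcal{R})$-module $\pi=i_P^G(\omega\otimes\nu)$ with $\nu$ non-strictly positive carries a $J(\mathcal{R})$-action. This comes from Lusztig's theory: via $\phi_q\circ{}^\dagger(-)$, each standard module is the restriction of a $J$-module $E(\nu)$. The $E(\nu)$ are realized on a fixed vector space, and the $t_w$ act by matrices polynomial in $\nu$. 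For $t_w\in J$ I then set $\eta(t_w)_\pi$ to be this action.

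PW2 is automatic in the Hecke-algebra setting. PW1' says that $\eta(t_w)$ is rational in $\nu$ with no poles in the non-strictly positive region, and I will get it from regularity of the $J$-action: Lusztig's $J$-modules attached to $(s,N)$ in the lowest cells vary algebraically. The second step is to show that $\eta$ is compatible with $H\to\CC$, i.e.\ $\eta(\phi_q({}^\dagger h))=\pi(h)$ for $h\in H(\mathcal{R})$. This is immediate from the definition of the $J$-module structure on standard modules, so the left triangle of \eqref{eqn BK summary diagram Hecke only} commutes.

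Third, I will prove that $\eta$ is an isomorphism. For injectivity, the $J$-modules $E(\nu)$ separate the basis $t_w$, since by Theorem~\ref{thm Suzuki simples} and its Hecke analogue the simple $J$-modules are exactly the standard modules. Surjectivity is the main obstacle. Given $\eta'\in\mathcal{E}_{J(\mathcal{R})}$, decompose it by Harish-Chandra blocks, and hence by two-sided cells through the Kazhdan--Lusztig parametrization. On each cell $\cc$, $J_\cc$ is Morita equivalent to (or, by \cite{rigid}, $J_\cc\cong$) the $F_\cc$-equivariant $K$-theory ring $K_{F_\cc}(Y\times Y)$. The regular rational families of endomorphisms of standard modules then identify with regular functions on the reductive centralizer $Z_{G^\vee}(N)$ with values in matrices. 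I therefore expect surjectivity to follow from a Paley--Wiener argument on the torus of $Z_{G^\vee}(N)$: the absence of poles on the non-strictly positive cone, together with Weyl-group equivariance, forces the endomorphism to be polynomial, hence in the image. Here I would invoke \cite{rigid} and \cite{BKK} rather than reprove them.

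Finally, for part (2), I define $\tilde\phi$ as the composite of $\eta$ with the inverse of the Plancherel isomorphism $\CC(\mathcal{R})\simeq\mathcal{E}_{\CC(\mathcal{R})}$, so the right triangle commutes by construction. It then remains to identify $\tilde\phi(t_w)$ with $f_w=\sum_x a_{x,w}(q)T_x$. The series $\sum_x a_{x,w}T_x$ inverts $\phi\circ{}^\dagger$ in Lusztig's completion, with coefficients in $P_W(\bq)^{-N}\Z[\bq^{\pm1/2}]$. So I need to check two things: that it converges in $\CC(\mathcal{R})$, using rapid decay of the $a_{x,w}(q)$ as in \cite{Plancherel}, and that its Fourier transform agrees with $\eta(t_w)$ on tempered modules. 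The agreement holds because both sides invert $\phi_q\circ{}^\dagger$ on the dense subalgebra $H$ and are continuous, so injectivity of the Fourier transform on $\CC$ gives the equality.
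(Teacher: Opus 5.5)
Your outline (build $\eta$ from the $J$-action on standard modules, cite \cite{rigid}, \cite{BKK} for surjectivity) is compatible with the paper, which proves this theorem entirely by citation. The genuine gap is in part (2).

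Redefining $\tilde\phi:=\mathcal{F}^{-1}\circ\eta$, with $\mathcal{F}\colon f\mapsto(\pi\mapsto\pi(f))$, makes the right triangle tautological. It also moves all the content into the claim $\mathcal{F}^{-1}(\eta(t_w))=f_w$, and your argument for that claim does not work. The left triangle only tells you that the two linear maps $t_w\mapsto\mathcal{F}^{-1}(\eta(t_w))$ and $t_w\mapsto f_w$ from $J(\mathcal{R})$ to $\CC(\mathcal{R})$ agree on the subspace $\phi_q({}^\dagger H(\mathcal{R}))$. That subspace is proper (already for $\SL_2$ one has $J/H\neq 0$, see the appendix of \cite{BK}), and the $t_w$ generally lie outside it.

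Density of $H$ in $\CC(\mathcal{R})$ is density in the \emph{target}, so it does not help. You would need a topology on $J(\mathcal{R})$ in which $\phi_q({}^\dagger H)$ is dense and both maps are continuous. The only natural candidate is the one pulled back from $\CC(\mathcal{R})$ along $\mathcal{F}^{-1}\circ\eta$, and continuity of $t_w\mapsto f_w$ for it is equivalent to the identity you want.

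The underlying problem is this. The identity $\sum_x a_{x,w}T_x=(\phi\circ{}^\dagger)^{-1}(t_w)$ holds in Lusztig's formal completion, where infinite sums converge $\bq^{-1/2}$-adically. Nothing in your argument relates that to convergence in $\CC(\mathcal{R})$ at $\bq=q$. For instance, you cannot conclude that $f_w\star f_{w'}$, computed in $\CC(\mathcal{R})$, is the specialization of the formal product. That is what you would need to make $t_w\mapsto f_w$ multiplicative and then invoke uniqueness of $J$-actions on tempered modules. Supplying this comparison is precisely the commutativity statement the paper imports from \cite{Plancherel}; one route would be to compute the $T_x$-coefficients of $\mathcal{F}^{-1}(\eta(t_w))$ via the Plancherel formula and match them with $a_{x,w}(q)$. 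Quoting \cite{Plancherel} only for the decay of the $a_{x,w}(q)$ leaves this step out.

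Two smaller points. First, your claim that $t_w$ acts on the fixed space of $i_P(\omega\otimes\nu)$ by matrices \emph{polynomial} in $\nu$ is false. By Zariski density, the $J$-relations and compatibility with $H$ would then hold for every $\nu\in\X(M)$, so every $i_P(\omega\otimes\nu)$ would extend to a $J$-module. But for $\GL_2$, $i_B(\delta_B^{-1/2})$ has $\triv$ as its unique proper submodule. Since $\triv$ is not standard and $i_B(\delta_B^{-1/2})\not\simeq i_B(\delta_B^{1/2})$, it admits no $J$-structure, because simple $J$-modules restrict to standard modules. The action is only rational in $\nu$. The absence of poles on the non-strictly positive region, walls included, is the real content of PW1'. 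It does not follow just from Lusztig's modules varying algebraically in $s$, since transporting them to the fixed space involves $\nu$-dependent intertwiners.

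Second, knowing that the simple $J$-modules are the standard modules does not by itself give injectivity of $\eta$. You also need the Jacobson radical of $J\otimes\C$ to vanish, which follows, for example, from positivity of Lusztig's trace form on $J$.
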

\begin{proof}
Injectivity of $\eta$ is shown in \cite{Plancherel}, surjectivity in \cite{rigid}; an independent argument was given simultaneously in \cite{BKK}. Commutativity of the diagram was shown in \cite{Plancherel}.
\end{proof}

\subsubsection{Reducible root data}
\label{subsubsubsection reducible root data}
In the sequel we will require the above results for reducible root data $\mathcal{R}$. Write $\mathcal{R}=\prod_{i}\mathcal{R}_i$ for irreducible root data $\mathcal{R}_i$. Then $\Waff(\mathcal{R})=\prod_{i}\Waff(\mathcal{R}_i)$; if $w=(w_i)_i$, then $\ell(w)=\sum_{i}\ell_i(w_i)$, where $\ell_i$ is the length function on $W_i$.

When it comes to Hecke algebras, we must allow a very form of unequal parameters, namely we must allow quadratic relations in each $\HH(\mathcal{R}_i)$ of the form $(T_s+1)(T_s-\bq^{f_i})=0$ for $s\in (S_{\mathrm{aff}})_i$ and $f_i\in\Z_{\geq 1}$. We have
$\HH(\mathcal{R})=\bigotimes_i\HH(\mathcal{R}_i)$ where the tensor product is taken over $\Z[\bq^{\pm\frac{1}{2}}]$. Likewise
$\HH |_{\bq^{1/2}=q^{1/2}}(\mathcal{R})=\bigotimes_i\HH(\mathcal{R}_i)|_{\bq^{1/2}=q^{1/2}}$, where now the tensor product is taken over $\Q$. 
It follows from unicity of the Kazhdan-Lusztig bases that if $w=(w_i)_i$, then $C'_w=\otimes_i C'_{w_i}$ and likewise for the $C_w$-basis. It follows that the two-sided cells of $\Waff$ are of the form $\cc=\prod_{i}\cc_i$ for two-sided cells of $\Waff(\mathcal{R}_i)$, and that we then have the $a$-values
\[
a(\cc)=\sum_{i}a(\cc_i).
\]

Likewise we obtain $J(\mathcal{R})=\bigotimes_i J(\mathcal{R}_i)$, and an injection
\[
\HH(\mathcal{R})\overset{\phi=\otimes_i\phi_i}{\into} J(\mathcal{R})\left[\bq^{\pm\frac{1}{2}}\right].
\]
In particular, $J(\mathcal{R})$ exists.

When clear from context, we will write $\HH$ and $H$ for an affine Hecke algebra over $\C[\bq^{\pm\frac{1}{2}}]$ and its specialization, respectively. This applies in particular to the following paragraph.

Recall from \cite{OpdamSpectral} that $\sett{q^{-\frac{\ell(w)}{2}}T_w}_w$ is a Hilbert basis of the completion $L^2(H(\mathcal{R}))$; in other words $L^2(H(\mathcal{R}))=\bigotimes_i L^2(H(\mathcal{R}_i))$ is a Hilbert space tensor product. 
We have $\CC(\mathcal{R})=\widehat{\bigotimes}_i\CC(\mathcal{R}_i)$ (tensor product of nuclear spaces) by virtue of the isomorphism of Fr\'{e}chet algebras given by the RHS map of \eqref{eqn BK summary diagram Hecke only}, and the isomorphism $C^\infty(X\times Y)=C^\infty(X)\hat{\otimes} C^\infty(Y)$ of nuclear spaces, for compact smooth manifolds $X$ and $Y$.

The following is then clear.

\begin{lem}
\label{lem axw formula reducible}
If $w=(w_i)_i$ and $x=(x_j)_j$ in $W(\mathcal{R})=\prod_{i}W(\mathcal{R}_i)$, then $H(\mathcal{R})$ has standard basis $T_x=\bigotimes_j T_{x_j}$ consisting of pure tensors, as does $J(\mathcal{R})$, and we have
\[
t_w=\otimes_i t_{w_i}=
\bigotimes_i\sum_{x_j}a_{x_{j},w_i}T_{x_{i}}=\sum_{x}a_{x,w}T_x,
\]
inside the algebraic tensor product 
\[
\bigotimes_i\CC(\mathcal{R}_i)\subsetneq\CC(\mathcal{R}),
\]
where $a_{x,w}=\prod_{i,j} a_{x_i,w_i}$. 
\end{lem}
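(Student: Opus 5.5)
The plan is to reduce everything to the factorization of the Kazhdan--Lusztig data over the irreducible factors $\mathcal{R}_i$, established in Section \ref{subsubsubsection reducible root data}, and then to invoke Theorem \ref{thm BK summary Hecke only} factor by factor.

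The first step concerns the standard bases. Since $\Waff(\mathcal{R})=\prod_i\Waff(\mathcal{R}_i)$ and $\ell(x)=\sum_i\ell_i(x_i)$, the elements $\bigotimes_j T_{x_j}$ of $\bigotimes_i\HH(\mathcal{R}_i)$ satisfy the braid relations $T_wT_{w'}=T_{ww'}$ for length-additive products. They also satisfy the quadratic relations with parameters $\bq^{f_i}$, because every simple reflection lives in a single factor. By uniqueness of the standard basis they are therefore the $T_x$. The analogous statement for $J$ is immediate from $J(\mathcal{R})=\bigotimes_iJ(\mathcal{R}_i)$: the basis $t_w$ is $\otimes_it_{w_i}$. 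This uses $C_w=\otimes_iC_{w_i}$, hence $h_{x,y,z}=\prod_ih_{x_i,y_i,z_i}$. It also uses that $\mathcal{D}$ is the product of the $\mathcal{D}_i$, since cells are products of cells.

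The second step checks that $\phi\circ{}^\dagger(-)$ factors as $\bigotimes_i\phi_i\circ{}^\dagger(-)$. For $\phi$, apply the defining formula with the product structure constants: the sum over $d\in\mathcal{D}$ and $z\sim_Ld$ splits as a product of sums. For ${}^\dagger$, note that $(-1)^{\ell(w_f)}\bq^{\ell(w)}T_{w^{-1}}^{-1}$ is multiplicative in $w=(w_i)$, since lengths add and inverses of pure tensors are pure tensors. The one point to watch is the sign $(-1)^{\ell(w_f)}$, which must be interpreted factorwise, with $w_f=(w_{i,f})_i$.

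The third step applies Theorem \ref{thm BK summary Hecke only}(2) in each factor, giving $t_{w_i}=\sum_{x_i}a_{x_i,w_i}(q)T_{x_i}$ in $\CC(\mathcal{R}_i)$. Take the algebraic tensor product and expand the product of the sums. The finite product of convergent series in the nuclear completion equals the sum over $x=(x_i)$ with coefficient $\prod_ia_{x_i,w_i}$. This is justified because the multiplication map $\bigotimes_i\CC(\mathcal{R}_i)\to\CC(\mathcal{R})=\widehat{\bigotimes}_i\CC(\mathcal{R}_i)$ is jointly continuous, and the Hilbert basis $q^{-\ell(x)/2}T_x$ of $L^2$ factors as well. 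Uniqueness of coefficients then identifies $a_{x,w}=\prod_i a_{x_i,w_i}$ as the coefficient of $(\phi\circ{}^\dagger)^{-1}(t_w)$ on $T_x$. Here the product is understood as taken over matching indices $i$.

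The main obstacle is this convergence and identification step: we must check that expanding a tensor product of infinite sums is legitimate in $\CC(\mathcal{R})$. I would handle it through the Fourier-side description of $\CC(\mathcal{R})$ as smooth sections over a product of compact tempered duals, where pure tensors correspond to products of functions and the identity $C^\infty(X\times Y)=C^\infty(X)\hat\otimes C^\infty(Y)$ applies.
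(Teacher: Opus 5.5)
Your proposal is correct and takes the same route as the paper. The paper proves nothing beyond declaring the lemma ``clear'' from the factorizations recorded just before it: $\ell(w)=\sum_i\ell_i(w_i)$, product Kazhdan--Lusztig bases and cells, $J(\mathcal{R})=\bigotimes_i J(\mathcal{R}_i)$ with $\phi=\otimes_i\phi_i$, the factorization of the Hilbert basis of $L^2$, and $\CC(\mathcal{R})=\widehat{\bigotimes}_i\CC(\mathcal{R}_i)$ via the Fourier transform and $C^\infty(X\times Y)=C^\infty(X)\hat{\otimes}C^\infty(Y)$. You make explicit the convergence and coefficient-identification step the paper leaves implicit; one small point is that $\mathcal{D}=\prod_i\mathcal{D}_i$ is best justified by additivity of $a$ and $\Delta$ (or uniqueness of the unit of $J$), not merely by cells being products.
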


\subsection{Endomorphism-valued Hecke algebras}
\label{subsection types}

Recall the Bernstein decomposition
\[
\Rep(G)=\prod_{\sS}\Rep(G)_\sS
\]
indexed by inertial equivalence classes $\sS=[M,\sigma]_G$ where $M$ is a Levi subgroup of $G$ and $\sigma$ is a supercuspidal representations of $M(F)$ \cite[Thm. VI.7.2]{Renard}, \cite[\S 2.2]{Bernstein}. 

Let $K$ be an open compact subgroup of $G(F)$ and $\rho$ an irreducible representation of $K$. Then following \cite{BushKutzBook}, we have the endomorphism-valued Hecke algebra
\[
H(G,K,\rho)=\sets{f\colon G(F)\to\End_\C(\rho^*)}{f(k_1gk_2)=\rho(k_1)^*f(g)\rho(k_2)^*,~\text{and}~\supp(f)~\text{is compact}}.
\]
Given a $G(F)$-representation $\pi$, we obtain an $H(G,K,\rho)$-module structure on the vector space
\[
\HomOver{K}{\rho}{\pi}
\]
via the formula
\begin{equation}
\pi(f)(\phi)=\left(u\mapsto\dIntOver{G}{\pi(g)\phi(f(g)^tu)}{g}\right)
\end{equation}
for $f\in H(G,K,\rho)$ and $\phi\colon\rho\to \pi$ and $f(g)^t$ is the adjoint operator.

Following \cite{BushKutzBook}, we say that a pair $(K,\rho)=(K_\sS,\rho_\sS)$ with $K$ and $\rho$ as above is an $\sS$-type if the functor 
\begin{equation}
\label{eqn type Morita}
\Rep(G)_\sS\to H(G,K_\sS,\rho_\sS)-\Mod
\end{equation}
given by 
\[
\pi\mapsto\HomOver{K}{\rho}{\pi}
\]
is an equivalence of categories.

We recall the connection between the endomorphism-valued Hecke algebras of Section \ref{subsection types} and the scalar-valued functions of \ref{subsection Paley-Wiener}. 
Recall the algebra isomorphism of \cite[Prop. 4.2.4]{BushKutzBook}
\begin{equation}
\label{eqn upsilon}
\Upsilon\colon H(G,K,\rho)\otimes\End_\C(\rho)\overset{\sim}{\to}, e_\rho\star\Hs\star e_\rho
\end{equation}
where
\[
e_\rho(g)=
\begin{cases}
\frac{\dim(\rho)}{\vol(K)}\mathrm{trace}(\rho(g^{-1}))&\text{if}~g\in K 
\\
0&\text{otherwise}
\end{cases}
\]
and $\Upsilon$ is given by taking matrix coefficients, \textit{i.e.} writing $\End(\rho)=\rho\otimes\rho^*$,
\[
\Upsilon\colon	f\otimes(v\otimes v^*)\mapsto \dim(\rho)(g\mapsto\pair{v^*}{f(g)^tv}).
\]
We will often write $e_\sS:=e_\rho$. 

Thus $\Upsilon$ implements the Morita equivalence \eqref{eqn type Morita}.

For any type, by \cite{BHK}, the isomorphisms \eqref{eqn upsilon} preserve tempered duals and Plancherel measures up to a scalar factor, and $\Upsilon$ extends to an isomorphism of $L^2$-completions, where the Hilbert algebra structure of $H(G,K,\rho)$ is with respect to the Hermitian form
\[
(f_1,f_2)=\frac{\vol(K)}{\dim\rho}\trace{f_1^*\star f_2(1)}{\rho},
\]
where $f_1^*(g)$ is the adjoint of $f_1(g)$ in $\End_\C(\rho)$ with respect to some $K$-invariant Hermitian form on $\rho$. As, by compactness of $K$, such a form exists and is unique up to positive scaling, this adjoint is well-defined \cite[\S 4.1--4.2]{BHK}.

Note that this does not say anything about standard modules, or, in the case of non-semisimple groups, which families in the tempered dual index essentially-square-integrable $G(F)$-representations. 

\subsection{Matrix coefficients and Fourier transforms}
In this section we record another compatibility of the isomorphism $\Upsilon$. It
requires only the orthogonality of matrix coefficients of representations of compact groups to prove, and must be well-known to experts. However, we could not find this statement in the literature.

\subsubsection{Vectors transforming according to $\rho$}
Let $\pi$ be an admissible representation of $G(F)$, and let $\pi^\rho\subset\pi$ be the subspace of $\pi$ transforming by $\rho$ under  $K$. We have
\[
\pi|_K=\rho^{\oplus n_\rho}\oplus\bigoplus_{\sigma\not\simeq\rho}\sigma
\]
with all $K$-isotypic components having finite multiplicity by admissibility of $\pi$.
%
\begin{lem}
\label{lem eval}
Evaluation gives an isomorphism
\[
\HomOver{K}{\rho}{\pi}\otimes\rho\to\pi^\rho.
\]
\end{lem}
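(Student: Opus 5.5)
The plan is to reduce everything to the finite-multiplicity decomposition $\pi|_K=\rho^{\oplus n_\rho}\oplus\bigoplus_{\sigma\not\simeq\rho}\sigma$ recorded just above, which holds because $K$ is compact and $\pi$ is smooth and admissible. First I check that evaluation $\phi\otimes v\mapsto\phi(v)$ is a well-defined linear map $\HomOver{K}{\rho}{\pi}\otimes\rho\to\pi$. Its image lies in $\pi^\rho$, since each $\phi(\rho)$ is a $K$-stable subspace isomorphic to a quotient of $\rho$, hence to $\rho$ or zero by irreducibility. The map is also $K$-equivariant when $K$ acts only on the $\rho$ factor.

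Next I compute both sides. The $\rho$-isotypic component $\pi^\rho$ is by definition the sum of all $K$-subrepresentations isomorphic to $\rho$. Given the decomposition, this equals the summand $\rho^{\oplus n_\rho}$, since a copy of $\rho$ has zero projection onto every $\sigma\not\simeq\rho$ by Schur's lemma. By the same decomposition and Schur's lemma ($\End_K(\rho)=\C$, as $\rho$ is irreducible over $\C$), we get
\[
\HomOver{K}{\rho}{\pi}=\HomOver{K}{\rho}{\rho^{\oplus n_\rho}}\simeq\C^{n_\rho}.
\]
Here $\HomOver{K}{\rho}{\sigma}=0$ for $\sigma\not\simeq\rho$. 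The only care needed is that $\Hom$ out of the finite-dimensional $\rho$ commutes with the possibly infinite direct sum, which holds because the image of $\rho$ is finite-dimensional and so lies in finitely many summands.

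Finally I identify the map explicitly. Choose the basis $\phi_1,\dots,\phi_{n_\rho}$ of $\HomOver{K}{\rho}{\pi}$ given by the inclusions of the $n_\rho$ copies of $\rho$. Then evaluation sends $\sum_i\phi_i\otimes v_i\mapsto(v_1,\dots,v_{n_\rho})\in\rho^{\oplus n_\rho}$, which is visibly a bijection onto $\pi^\rho$.

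No step is a real obstacle. The only points deserving a sentence are the use of Schur's lemma over $\C$ and the observation that admissibility makes $n_\rho$ finite. Without finiteness the statement still holds, but finite dimensionality of both sides is what later allows dimension counts.
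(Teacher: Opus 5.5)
Your proof is correct and is essentially the paper's argument. Both rest on the finite-multiplicity decomposition of $\pi|_K$ and Schur's lemma; the paper shows that evaluation is surjective and then compares dimensions ($n_\rho\dim\rho$ on each side), whereas you write the map out explicitly in the basis of inclusions of the copies of $\rho$. Your explicit version also avoids the paper's implicit claim that the span of $\pi(K)v$ is a single copy of $\rho$ for every $v\in\pi^\rho$, which can fail when $n_\rho\geq 2$ and $\dim\rho\geq 2$ (surjectivity still holds, since $v$ is a sum of vectors lying in individual copies).
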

\begin{proof}
Given $v\in\pi^\rho$, let $\phi\colon \rho\into\pi$ be the inclusion of $\pi(K)v$, so that there is $u$ such that $\phi(u)=v$. Therefore evaluation is surjective. On the other hand, the multiplicity of $\rho$ in $\pi|_{K}$ is finite, and both sides have the same dimension. 
\end{proof}

\subsubsection{Compatibility of Fourier transforms}
\begin{prop}
\label{prop Fourier}
The diagram 
\[
\begin{tikzcd}
H(G,K,\rho^*)\otimes\End(\rho)\arrow[d]\arrow[r, "\Upsilon"]&e_\rho\star C_c^\infty(G(F))\star e_\rho\arrow[d]\\
\End_\C(\HomOver{K}{\rho}{\pi})\otimes\End_\C(\rho)=\End\left(\HomOver{K}{\rho}{\pi}\otimes\rho\right)\arrow[r, "\sim"]&\End(\pi^\rho)
\end{tikzcd}
\]
commutes, where the bottom horizontal isomorphism is induced by the  evaluation isomorphism of Lemma \ref{lem eval}, the right vertical map is the usual Fourier transform and the left vertical map sends $f\otimes e$ to the endomorphism given by the formula
\[
\pi(f\otimes e)(\phi\otimes v)=\left(u\mapsto\dIntOver{G}{\pi(g)\phi(f(g)u)}{g}\right)\otimes ev.
\]
\end{prop}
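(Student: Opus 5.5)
By linearity it suffices to check commutativity on pure tensors $f\otimes(v\otimes v^*)$ with $f\in H(G,K,\rho^*)$ and $v\otimes v^*\in\rho\otimes\rho^*=\End(\rho)$. We evaluate both composites on vectors of the form $\phi(u_0)\in\pi^\rho$, with $\phi\in\HomOver{K}{\rho}{\pi}$ and $u_0\in\rho$. By Lemma \ref{lem eval} these span $\pi^\rho$. The claim is then an equality of two vectors in $\pi$, which we prove by expanding the integrals and applying Schur orthogonality for the compact group $K$.

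The first step is to compute the right-hand route. Write $F=\Upsilon(f\otimes v\otimes v^*)$, so that $F(g)=\dim(\rho)\pair{v^*}{f(g)^tv}$, and compute $\pi(F)\phi(u_0)=\int_G F(g)\pi(g)\phi(u_0)\,dg$. The $K$-biequivariance of $f$ means $F$ is $K$-finite on both sides. To expose this, we average over $K$: we substitute $g\mapsto gk$ and integrate in $k\in K$ against the normalized Haar measure. Using $\pi(k)\phi(u_0)=\phi(\rho(k)u_0)$ and $f(gk)=f(g)\rho^*(k)$, this turns the integrand into
\[
\dim(\rho)\int_G\int_K\pair{v^*}{(f(g)\rho^*(k))^tv}\,\pi(g)\phi(\rho(k)u_0)\,dk\,dg .
\]
The inner $K$-integral is an integral of a product of a matrix coefficient of $\rho^*$ (equivalently, of $\rho$) with the vector $\rho(k)u_0$.

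The second step is to evaluate this inner integral with Schur orthogonality. Choose a basis $\{e_i\}$ of $\rho$ with dual basis $\{e_i^*\}$, and expand $\rho(k)u_0=\sum_i\pair{e_i^*}{\rho(k)u_0}e_i$. The orthogonality relation $\int_K\pair{a^*}{\rho(k)b}\overline{\pair{c^*}{\rho(k)d}}\,dk=\dim(\rho)^{-1}(\cdots)$ cancels the factor $\dim(\rho)$ and collapses the sum. What remains has the form
\[
\int_G\pi(g)\phi\bigl(f(g)\,u_1\bigr)\,dg,\qquad u_1=\pair{v^*}{u_0}\,v=(v\otimes v^*)u_0 .
\]
This is exactly $\bigl(\pi(f)\phi\bigr)\otimes (v\otimes v^*)u_0$ pushed through evaluation, which is the left-hand route followed by the bottom isomorphism.

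The main obstacle is bookkeeping rather than anything conceptual. Three conventions have to line up: the transpose $f(g)^t$ in $\Upsilon$ against $f(g)$ in the left vertical map, the use of $\rho$ versus $\rho^*$ (the Hecke algebra here is $H(G,K,\rho^*)$, which is why $f(g)$ acts on $\rho$ rather than $\rho^*$), and the order of $v$ and $v^*$ in $\End(\rho)=\rho\otimes\rho^*$. To fix these, we first verify the identity in the toy case $G=K$, $f=$ the $\rho^*$-equivariant function supported on $K$. There both sides reduce to the statement that $e_\rho$ acts as the identity on $\pi^\rho$, and this calibrates the normalizations, including the Haar measure with $\vol(K)$ matching the factor $\dim(\rho)/\vol(K)$ in $e_\rho$.

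Finally, the two sides are compared on all of $\pi$ rather than only on $\pi^\rho$. Both endomorphisms vanish on the other $K$-isotypic components: on the right because $F=e_\rho\star F\star e_\rho$, and on the left by construction. So equality on $\pi^\rho$ suffices.
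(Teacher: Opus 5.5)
Your proposal is correct and is essentially the paper's argument: reduce to pure tensors, evaluate on vectors $\phi(u_0)\in\pi^\rho$, and collapse the $K$-integral by Schur orthogonality so that both routes give $\int_G\pi(g)\phi\bigl(f(g)^t(v\otimes v^*)u_0\bigr)\,dg$. Note the transpose there: with your own conventions $F(g)=\dim(\rho)\langle v^*,f(g)^tv\rangle$ and $f(gk)=f(g)\rho^*(k)$, the computation yields $f(g)^t$ rather than the $f(g)$ you wrote, which matches the paper's proof (the paper's statement is itself loose on this point). The only other difference is cosmetic: the paper splits $G$ into $K$-cosets and pairs with $\phi^*(v^*)$ before applying orthogonality, whereas you average over right translation by $K$ and expand in a basis.
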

%
%
\begin{proof}
Following the diagram right and then down, we obtain the following endomorphism. It suffices to consider $e=w\otimes w^*\in\End(\rho)$, $f\in H(G,K,\rho^*)$, and $\tilde{v}=\phi(v)$ for $v\in\rho$ and $\phi$ as in Lemma \ref{lem eval}.

We have
\begin{align}
\pi(\Upsilon(f\otimes e))\tilde{v}&=\dim(\rho)\dIntOver{G(F)}{\pair{f(g)w^*}{w}\pi(g)\phi(v)}{g}
\nonumber
\\
&=
\dim(\rho)
\sum_{\substack{G(F)/K \\ \pi(\dot{g})\phi(\rho)\subset\phi(\rho)}}\dIntOver{K}{\pair{f(\dot{g})\rho^*(k)w^*}{w}\pi(\dot{g})
\rho(k)\phi(v)}{k}
\label{eqn left down}
\end{align}
Now let $\phi^*\colon\rho^*\into\pi^*$ and $v^*\in\rho^*$. Pairing against \eqref{eqn left down} gives
\begin{align}
&\dim(\rho)
\sum_{\substack{G(F)/K \\ \pi(\dot{g})\phi(\rho)\subset\phi(\rho)}}\dIntOver{K}{\pair{f(\dot{g})\rho^*(k)w^*}{w}\pair{\phi^*(v^*)}{\pi(\dot{g})
\rho(k)\phi(v)}}{k}
\nonumber
\\
&=
\dim(\rho)
\sum_{\substack{G(F)/K \\ \pi(\dot{g})\phi(\rho)\subset\phi(\rho)}}\dIntOver{K}{\pair{\rho^*(k)w^*}{f(\dot{g})^tw}\pair{\phi^*(v^*)}{\pi(\dot{g})
\rho(k)\phi(v)}}{k}
\nonumber
\\
&=
\dim(\rho)
\sum_{\substack{G(F)/K \\ \pi(\dot{g})\phi(\rho)\subset\phi(\rho)}}\dIntOver{K}{\pair{\rho^*(k)w^*}{f(\dot{g})^tw}\pair{\pi(\dot{g})^t\phi^*(v^*)}{
\rho(k)\phi(v)}}{k}
\label{eqn left down pre Schur}
\\
&=
\sum_{\substack{G(F)/K \\ \pi(\dot{g})\phi(\rho)\subset\phi(\rho)}}\pair{w^*}{v}\pair{\pi(\dot{g})^tv^*}{f(\dot{g})^tw},
\label{eqn left down post Schur}
\end{align}
where between lines \eqref{eqn left down pre Schur} and \eqref{eqn left down post Schur} we used Schur orthogonality for $K$.

Following the diagram down and to the right yields 
\begin{align}
\mathrm{eval}\left(\pi\left(f\otimes e\right)\left(\phi\otimes v\right)\right)&=
\dIntOver{G(F)}{\pi(g)\phi(f(g)^tev)}{g}
\nonumber
\\
&=
\dIntOver{G}{\pi(g)\pair{w^*}{v}\phi(f(g)^tw)}{g}.
\label{eqn down right}
\end{align}
Pairing \eqref{eqn down right} with $\phi^*(v^*)$ as before, we obtain
\begin{align*}
\dIntOver{G(F)}{\pair{w^*}{v}\pair{\phi^*(v^*)}{\pi(g)\phi\left(f(g)^tw\right)}}{g}
&=
\sum_{\substack{G(F)/K \\ \pi(\dot{g})\phi(\rho)\subset\phi(\rho)}}\dIntOver{K}{\pair{w^*}{v}\pair{\phi^*(v^*)}{\pi(g)\phi\left(\rho(k)\rho^*(k)^tf(\dot{g})^tw\right)}}{k}
\\
&=
\sum_{\substack{G(F)/K \\ \pi(\dot{g})\phi(\rho)\subset\phi(\rho)}}
\pair{w^*}{v}\pair{\phi^*(v^*)}{\pi(\dot{g})\phi\left(f(\dot{g})^tw)\right)},
\end{align*}
as required.
\end{proof}

\subsection{Types and Hecke algebras for inner forms of $G=\GL_n$}
\label{subsections BK and SS types}
We now recall the endomorphism-valued Hecke algebras for the types constructed in \cite{BushKutzBook}, \cite{BushKutzComp}, \cite{Secherre},  \cite{SecherreStevens}, \cite{SecherreStevensIV}. The main property we need is that for each inertial class $\sS=[L,\sigma]_G$, we have an isomorphism
\begin{equation}
\label{eqn psi isomorphism BK}
\Psi\colon H(G,K_\sS,\rho_\sS)\to \bigotimes_i H_{e_i}(q^{f_i})
\end{equation}
relating $e_\sS H(G) e_\sS$ to affine Hecke algebras. The precise relations are given by the following theorem.
\begin{theorem}[\cite{BushKutzBook}, \cite{BushKutzComp},\cite{Secherre}, \cite{SecherreStevensIV}]
\label{thm BushnellKutzko}
Let $G$ be an inner form of $\GL_n$ and $L$ be a Levi subgroup, so that $L(F)=\prod_{i}\GL_{n_i}(D)^{e_i}$ with $\sum_{i}n_ie_i=n$ Let $\sS=[L,\sigma]_G$ be an inertial class, with $L=\prod_{i}\GL_{n_i}(F)^{e_i}$ and $\sigma=\boxtimes_i\sigma_i^{\boxtimes e_i}$ for $\sigma_i$ a supercuspidal representation of $\GL_{n_i}(F)$ such that $\sigma_i$ is not inertially equivalent to $\sigma_j$ for $i\neq j$. Let $f_i$ be the torsion number of $\sigma_i$. 

Then
\begin{enumerate}
\item[(a)] 
There is a type $(K_\sS,\rho_\sS)$ for the Bernstein block $\Rep(G)_{\sS}$ 
and an isomorphism 
\[
H(G,K_\sS,\rho_\sS)\simeq \bigotimes_{i} H_{e_i}(q^{f_i}),
\]
where $H_{e_i}(q^{f_i})$ is an affine Hecke algebra of type 
$\GL_{e_i}$ with parameter $q^{f_i}$, and the tensor product is taken over $\C$, inducing a Morita equivalence
\begin{equation}
\label{eqn BK type equivalence}
\Rep(G)_{\sS}\simeq\bigotimes_{i} H_{e_i}(q^{f_i})-\Mod,
\end{equation}
via \eqref{eqn type Morita}.
\item[(b)]
If $\Rep(G)_{\sS}$ contains an essentially discrete series representation, then on the right hand side of \eqref{eqn BK type equivalence} appears a single Hecke algebra $H_{m}(q^f)$, and the equivalence \eqref{eqn BK type equivalence} induces a bijection of essentially discrete series representations preserving formal degrees up to explicit scalar factors.
\end{enumerate}
\end{theorem}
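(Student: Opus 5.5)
The plan is to assemble the statement from the cited constructions rather than reprove them. I would treat the simple case first and then pass to the semisimple case by the theory of covers.

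\emph{Simple case.} Suppose $L=\GL_{n_1}(D)^{e}$ and $\sigma=\sigma_1^{\boxtimes e}$.
\begin{enumerate}
\item By \cite{BushKutzBook} for $D=F$, and by \cite{Secherre}, \cite{SecherreStevens} in general, $\sigma_1$ contains a maximal simple type $(J_1,\lambda_1)$. This type is built from a simple stratum $[\mathfrak{A},m,0,\beta]$, a simple character $\theta$, a $\beta$-extension $\kappa$, and a cuspidal representation $\tau$ of a finite group $\GL_{m}(k_{E'})$, where $E'$ is a suitable extension of $F[\beta]$.
\item Using the same $\beta$ in block-diagonal form gives a simple type $(J,\lambda)$ in $G$ of the form $\kappa\otimes\tau^{\otimes e}$ pulled back to a parahoric-type subgroup.
\item I would compute the support of $H(G,J,\lambda)$. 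By the intertwining theorem for simple characters and $\beta$-extensions, it equals $J\,\Waff(\GL_e)\,J$, with $\Waff(\GL_e)$ embedded through $B^\times=\mathrm{Cent}_G(\beta)$.
\item I would compute the quadratic relations. By the finite-group computation of Howlett--Lehrer, the generators coming from the finite Weyl group satisfy $(T_s+1)(T_s-q^{f})=0$, where $q^{f}$ is the cardinality of the residue field of $E'$ raised to the power $m$. This is precisely the torsion number of $\sigma_1$, that is, the order of the group of unramified characters fixing $\sigma_1$. An explicit invertible element normalizing $J$ supplies the translation part.
\end{enumerate}
These steps give the isomorphism $H(G,J,\lambda)\simeq H_e(q^f)$.

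\emph{Semisimple case.} For general $\sS$, I would follow \cite{BushKutzComp} and \cite{SecherreStevensIV}.
\begin{enumerate}
\item Form the $L$-type $(J_L,\lambda_L)=\bigotimes_i(J_i,\lambda_i)$, taking a simple type in each $\GL_{n_ie_i}(D)$.
\item Construct a $G$-cover $(K_\sS,\rho_\sS)$ of it relative to a parabolic subgroup with Levi $M=\prod_i\GL_{n_ie_i}(D)$.
\item By the cover axioms, the Hecke algebra of the cover is computed from the Levi, giving $H(G,K_\sS,\rho_\sS)\simeq H(M,J_M,\lambda_M)\simeq\bigotimes_iH_{e_i}(q^{f_i})$.
\item The hypothesis that the $\sigma_i$ are pairwise inertially inequivalent is used exactly to show that nothing outside $M$ intertwines $\lambda_M$. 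This is what makes the Hecke algebra of the cover equal to that of $M$.
\item The Morita equivalence \eqref{eqn BK type equivalence} then follows from the type property and \eqref{eqn upsilon}.
\end{enumerate}
I expect the main obstacle to be the non-split inner form case. Here one needs the existence of $\beta$-extensions with the correct intertwining, and the identification of the parameter with $q^{f}$ for the torsion number. The parameter is read off from the residue degree of $E'$ and from $m$, with the twist by $D$ entering through both; I would rely on \cite{SecherreStevensIV} and check only that the resulting exponent equals the torsion number.

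For (b), an essentially discrete series representation in $\Rep(G)_\sS$ has supercuspidal support consisting of a single segment. So $\sS=[\GL_{n_1}(D)^m,\sigma_1^{\boxtimes m}]$, and only one factor $H_m(q^f)$ occurs. Discrete series correspond on both sides to twists of the Steinberg module. The Morita equivalence preserves temperedness and square-integrability because it preserves the $L^2$ structure and Plancherel measures up to a scalar by \cite{BHK}. Formal degrees then match up to the explicit factor $\vol(K_\sS)/\dim\rho_\sS$ arising from the Hermitian form recalled after \eqref{eqn upsilon}.
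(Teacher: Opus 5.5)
Your overall route is the same as the paper's, which proves the theorem only by citing Bushnell–Kutzko, their semisimple-types paper, Sécherre and Sécherre–Stevens: simple types for the homogeneous factors, then a $G$-cover of the resulting $M$-type with $M=\prod_i\GL_{n_ie_i}(D)$, using inertial inequivalence of the $\sigma_i$ to confine the intertwining to $K_\sS MK_\sS$. Two minor points: what you call the $L$-type is really an $M$-type, and the cover axioms alone only give an injection $H(M,\lambda_M)\hookrightarrow H(G,K_\sS,\rho_\sS)$, with surjectivity coming from your support computation.

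The substantive problem in (a) is the final check you propose. For non-split $D$, the exponent you read off from the finite group is not the torsion number in general. Take $D$ quaternion and the Iwahori block $[D^\times\times D^\times,\triv\otimes\triv]$ of $\GL_2(D)$. Here the type is $(I,\triv)$, and the finite reflection satisfies $(T_s+1)(T_s-q^2)=0$ because $[IsI:I]=|k_D|=q^2$. But the torsion number of $\triv_{D^\times}$ is $1$, since $\mathrm{Nrd}$ is surjective. In general, compare reducibility of $\sigma\times\sigma\nu^{x}$ (with $\nu=|\mathrm{Nrd}|_F$), which happens exactly at $x=\pm s(\sigma)$, with reducibility of principal series of $H_2(Q)$. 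This forces $Q=q^{s(\sigma)t(\sigma)}$, where $t(\sigma)$ is the torsion number. That equals $q^{t(\sigma)}$ only when $s(\sigma)=1$, for instance when $D=F$. So this step can only succeed for the presentation-compatible isomorphism after replacing $f_i$ by $s(\sigma_i)t(\sigma_i)$; the paper's citation does not address this either.

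For (b), the real gap is the sentence ``it preserves the $L^2$ structure''. \cite{BHK} relates $\Rep(G)_\sS$ to $H(G,K_\sS,\rho_\sS)$, and it transports Plancherel measure across an algebra isomorphism only if that isomorphism is one of Hilbert algebras. To reach $H_m(q^f)$ you therefore need the specific isomorphism of (a) to respect the $*$-involutions. It must also send the trace of $H(G,K_\sS,\rho_\sS)$ to a positive multiple of the canonical trace of $H_m(q^f)$.

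This is a property of the particular isomorphism, not of the abstract Morita equivalence. Composing it with the Iwahori–Matsumoto involution of $H_m(q^f)$ gives another algebra isomorphism, and that one exchanges the Steinberg modules with one-dimensional non-tempered modules. The paper imports this compatibility from \cite[\S 7]{BushKutzBook} for $\GL_n(F)$ and from \cite[Thm. 4.3]{SolleveldCompletion} for inner forms. With it, your argument goes through: homogeneous supercuspidal support, then \cite{BHK}. The formal-degree constant then also involves the trace-normalization constant of that isomorphism, not only $\vol(K_\sS)/\dim\rho_\sS$.
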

For $\GL_n$, the theorem is due to Bushnell-Kutzko; for general inner forms, to S\'{e}cherre and S\'{e}cherre-Stevens. Part (b) is Corollary 8.5.11, Theorem 7.7.1, and Corollary 7.7.11 of \cite{BushKutzBook}, respectively; they apply to inner forms by \cite[Thm. 4.3]{SolleveldCompletion}.

\begin{dfn}
\label{dfn H(s) J(s) C(c)}
In the setting of Theorem \ref{thm BushnellKutzko}, we write 
\[
H(\sS):=\bigotimes_i H_{e_i}(q^{f_i})\overset{\otimes\phi_{i,q^{f_i}}}{\into} J(\sS):=\bigotimes_iJ(e_i),
\]
as in Section \ref{subsubsubsection reducible root data}, where we write $J(e_i)$ for the asymptotic Hecke algebra of type $\GL_{e_i}$.
Likewise we write $\CC(\sS)$ for the Schwartz completion of $H(\sS)$.
\end{dfn}
\subsubsection{Standard modules}
\label{subsusbsection standard modules}

On the other hand, the affine Hecke algebras $H_{e_i}(q^{f_i})$ also admit Hilbert algebra structures. 

By now, there are in principal three \textit{a priori} unrelated notions of  discrete series representation, tempered representation, and standard representation in play: for $G(F)$-representations, for endomorphism-valued Hecke algebras $H(G,K,\rho)$, and for the affine Hecke algebras $H_{e_i}(q^{f_i})$. Thankfully, it turns out that all of these notions are equivalent, but some of these equivalences require examining the construction of the types involved, and seem not to follow from abstract equivalences of the form \eqref{eqn BK type equivalence}.

For $\GL_n$, it is shown in \cite[\S 7]{BushKutzBook} that the isomorphism
in Theorem \ref{thm BushnellKutzko} (a) respects the involutions and sends the trace on $H(G,K,\rho)$ to a positive multiple of the trace of the affine Hecke algebra on the RHS of the equivalence \eqref{eqn BK type equivalence}. Hence these isomorphisms preserve tempered duals and Plancherel measures up to positive factors.

This settles the coincidence of temperedness and Plancherel measures. We have already recalled as part of Theorem \ref{thm BushnellKutzko} that, when they exist, essentially-discrete series representations in $\Rep(G)_\sS$ are in bijection with essentially-discrete series $H_{e}(q^f)$-modules.

The final matching we need is the for standard $G(F)$-representations in $\Rep(G)_\sS$ to match to standard $\bigotimes_i H_{e_i}(q^{f_i})$-modules in the sense of \cite{CG}, \cite{KLDeligneLanglands}. This relies on the details of the constructions of \cite{BushKutzBook}, \cite{Secherre} and has been checked by Solleveld:
\begin{theorem}[\cite{SolleveldCompletion}, p. 43]
\label{thm SolleveldCompletion}
The equivalences of categories in Theorem \ref{thm BushnellKutzko} and  preserve standard representations.
\end{theorem}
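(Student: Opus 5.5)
The plan is to reduce preservation of standard modules to three compatibilities of the type-theoretic equivalence \eqref{eqn BK type equivalence}: with parabolic induction, with temperedness on Levi subgroups, and with the positivity of unramified twists. Write a standard $G(F)$-representation in $\Rep(G)_\sS$ as $\pi=i_P^G(\omega\otimes\nu)$, with $\omega$ essentially tempered on $M=M_P$ and $\nu$ strictly positive. The inertial support of $\omega$ is some $\sS_M=[L,\sigma]_M$ with $L\subset M$. On the Hecke side, a standard module of $\bigotimes_i H_{e_i}(q^{f_i})$ in the sense of \cite{CG}, \cite{KLDeligneLanglands} is induced from a parabolic subalgebra $\bigotimes_i H(\mathcal{R}_{M,i})$, i.e.\ a tensor product of affine Hecke algebras of Levi subgroups of the $\GL_{e_i}$. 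The module being induced is a tempered module twisted by a strictly positive character of the centre of that subalgebra.

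\emph{Step 1 (induction).} I would use that the Bushnell--Kutzko and S\'{e}cherre--Stevens types for $G$ are $G$-covers of the corresponding types $(K_{\sS_M},\rho_{\sS_M})$ for $M$. By \cite[\S 8]{BushKutzComp} this yields an injective algebra map
\[
t_P\colon H(M,K_{\sS_M},\rho_{\sS_M})\to H(G,K_\sS,\rho_\sS)
\]
under which $\HomOver{K_\sS}{\rho_\sS}{i_P^G\tau}$ is the induced module $H(G,K_\sS,\rho_\sS)\otimes_{t_P}\HomOver{K_{\sS_M}}{\rho_{\sS_M}}{\tau}$. Next I would check, from the explicit description of $\Psi$ in the constructions, that $\Psi\circ t_P$ identifies the Levi Hecke algebra with the standard parabolic subalgebra of $\bigotimes_i H_{e_i}(q^{f_i})$. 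This uses the fact that the covering elements supported on strongly $P$-positive elements map to Bernstein-presentation elements $\theta_x$ for $x$ in the dominant part of the lattice. With that in hand, $\pi$ corresponds to an induced module from the parabolic subalgebra.

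\emph{Step 2 (tempered on the Levi).} Apply the coincidence of temperedness already recalled for $G$ to the Levi $M$. The traces and $*$-structures are preserved up to positive scalars by \cite{BHK} and \cite[\S 7]{BushKutzBook}, applied to $M$, which is again a product of inner forms of $\GL$. From this, $\omega$ tempered modulo centre corresponds to a module that is tempered modulo centre over the Levi Hecke algebra. The central part is handled by Theorem \ref{thm BushnellKutzko}(b) on each factor.

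\emph{Step 3 (positivity), the expected main obstacle.} It remains to show that strict positivity of $\nu$ matches strict positivity of the corresponding central character of $\bigotimes_i H(\mathcal{R}_{M,i})$. An unramified character $\nu$ of $M(F)$ twists the Levi Hecke module by a character of the lattice $X_*$ underlying each $\GL_{e_i}$. The difficulty is to see that the identification sends the $P$-positive cone of $Z(M)$ (modulo compacts) to the dominant cone, and to compute the correct normalisation. This normalisation involves the factor $\delta_P^{1/2}$ versus the $q^{f_i/2}$ appearing in $\theta_x$, and the torsion numbers $f_i$ rescaling the lattice. I would first compute on the support of the covering elements $\zeta\in Z(M)$ that generate $\theta_x$, checking that $|\nu(\zeta)|$ compares with $1$ exactly as the Hecke character does on $\theta_x$. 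A normalisation discrepancy can only be a positive real character (a power of $\delta_P$), and this is absorbed precisely by the normalised induction. Combining Steps 1--3 shows that standard $G$-modules go to standard Hecke modules. The converse direction follows by running the same argument backwards, since every standard Hecke module arises from some parabolic, tempered datum and positive character, all of which lift.
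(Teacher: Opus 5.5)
Your plan matches what the paper relies on. The paper proves nothing here itself: it takes temperedness from the trace and involution compatibility of \cite[\S 7]{BushKutzBook} and \cite{BHK}, and compatibility with parabolic induction and with positivity of unramified twists from Solleveld's check of the explicit Bushnell--Kutzko/S\'{e}cherre--Stevens constructions \cite[\S 4]{SolleveldCompletion}, which is the content of your Steps 1--3.

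Two details need correcting when you carry it out. First, restriction along (normalized) $t_P$ realizes the Jacquet functor for $P$, so normalized $i_P^G$ corresponds to coinduction $\mathrm{Hom}_{H(M)}(H(G),-)$ along $t_P$, equivalently to $H(G)\otimes(-)$ along $t_{\bar P}$, not to $H(G)\otimes(-)$ along $t_P$; the latter swaps strictly positive and strictly negative twists. Second, when $M$ contains blocks of several inertial types, group-side strict positivity involves roots absent from $\bigotimes_i H_{e_i}(q^{f_i})$, so for fixed $M$ Step 3 is only an implication, and in the converse you must choose $P=MN$ from the real part of the Hecke central character (grouping blocks of all types with equal real part) rather than lifting the Hecke parabolic subalgebra.
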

\begin{rem}
It would be interesting to prove Theorem \ref{thm SolleveldCompletion} for the types considered in \cite{AFMO}, \cite{AFMO2}.
\end{rem}


\subsubsection{Schwartz completions}
\label{subsubsection Schwartz completions}
In \cite[Thm. 3.12.]{SolleveldCompletion}, Solleveld proves a result implying that $\Upsilon_H$ extends to an isomorphism
\[
\Upsilon_{\mathcal{C}(\sS)}\colon\mathcal{C}(\sS)\otimes\End_\C(\rho_\sS)\to e_\sS\star \mathcal{C}(G)\star e_\sS
\]
of Fr\'{e}chet algebras. As stated, \textit{loc. cit. } is slightly weaker, giving only a Morita equivalence between the above algebras, but this is only because \textit{op. cit.} works in greater generality---when types are available, we obtain an honest isomorphism. Indeed, as explained in an earlier arXiv version of \cite{ABPS1}, in the presence of types, we may replace, in the notation of \cite{SolleveldCompletion}, $I_P^G(V_\omega)^{K_\sS}$ in \cite[(84)]{SolleveldCompletion} with the subspace $I_P^G(V_\omega)^{\rho_\sS}$ of $\rho_\sS$-isotypic vectors. Then we see that $A_1\otimes\End_\C(\rho_\sS)\simeq A_2$ \cite[(84)]{SolleveldCompletion} as required, where we use the bottom horizontal identification in the diagram of Proposition \ref{prop Fourier}.
The extension $\Upsilon_{\mathcal{C}(\sS)}$ is the unique such extension, by density of the respective Hecke algebras in the respective Schwartz algebras.
%


\subsection{Isomorphisms of asymptotic Hecke algebras for inner forms of $\GL_n$}
\label{subsection isomorphisms of asymptotic Hecke algebras}
The following theorem was stated as Theorem 3.2 of \cite{suzuki}\footnote{We amend the statement slightly from that of \cite{suzuki}.} for $G=\GL_n$. In this section, we provide a proof. It is nearly the proof implied \cite{SuzukiPersonal} in \cite{suzuki}, but we found we needed to appeal also to \cite{SolleveldCompletion}.
\begin{theorem}[\cite{suzuki}, Thm. 3.2.]
\label{thm kenta}
Let $G$ be an inner form of $\GL_n$. Then there is a commutative diagram
\begin{equation}
\label{eqn BK summary diagram}
\begin{tikzcd}[row sep=large, column sep=large]
H(\sS)\otimes\End_\C(\rho_\sS) \arrow[r, hook, "\phi_{\sS,q}\circ{}^\dagger(-)\otimes\id"] \arrow[d, "\sim"] 
  & J(\sS)\otimes\End_\C(\rho_\sS) \arrow[r, hook, "\tilde{\phi}_{\sS,q}\otimes\id"] \arrow[d, "\eta\otimes\id" near start, "\sim", near end] 
  & \mathcal{C}(\sS)\otimes\End_\C(\rho_\sS) \arrow[d, "\sim"]  & T_w\otimes e \arrow[d, mapsto]
  \\
\Ee_{H(\sS)}\otimes\End_\C(\rho_\sS) \arrow[r, hook] \arrow[dd, bend right=75, "\Upsilon_{\Ee_H}" near end]
  & \Ee_{J(\sS)}\otimes\End_\C(\rho_\sS) \arrow[r, hook] \arrow[dd, bend right=75, "\Upsilon_{\Ee_J}" near end, dashed]
  & \Ee_{\mathcal{C}(\sS)}\otimes\End_\C(\rho_\sS) \arrow[dd, bend left=70, "\Upsilon_{\Ee_C}"] & \pi(T_w)\otimes e
\\
e_\sS\star \Hs(G)\star e_\sS \arrow[r, hook, crossing over] \arrow[d, "\sim"] \arrow[from=uu, bend right=80, "\Upsilon_H" near start, crossing over] 
  & e_\sS\star \J(G)\star e_\sS \arrow[r, hook] \arrow[d,"\sim"] \arrow[from=uu, bend right=75, "\Upsilon_{J(\sS)}" near start, crossing over, dashed]
  & e_\sS\star \mathcal{C}(G)\star e_\sS \arrow[d,"\sim"]\arrow[from=uu, bend left=75, crossing over, "\Upsilon"] & f\arrow[d, mapsto]
  \\
e_\sS\star\Ee_{\Hs}\star e_\sS \arrow[r, hook]
  & e_\sS\star\Ee_{\J}\star e_\sS \arrow[r,hook]
  & e_\sS\star\Ee_{\CC}\star e_\sS&\pi(f)
\end{tikzcd}
\end{equation}
in which all the morphisms from the rear pane to the front pane are isomorphisms.
\end{theorem}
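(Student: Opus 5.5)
The plan is to build the diagram pane by pane. The outer columns are supplied by earlier results, and the middle column, $\Upsilon_{J(\sS)}$ and $\Upsilon_{\Ee_J}$, is forced as a restriction of the rightmost one. The left column is Theorem \ref{thm BushnellKutzko}(a), via \eqref{eqn upsilon}, together with Bernstein's Paley-Wiener theorem on both sides. The right column comes from the isomorphism $\Upsilon_{\mathcal{C}(\sS)}$ of Section \ref{subsubsection Schwartz completions}, extending $\Upsilon_H$, together with the Plancherel isomorphisms for $\mathcal{C}(G)$ and for $\mathcal{C}(\sS)$ (Opdam). The rear horizontal row is Theorem \ref{thm BK summary Hecke only}, applied factorwise through Lemma \ref{lem axw formula reducible} to the reducible root datum underlying $H(\sS)=\bigotimes_iH_{e_i}(q^{f_i})$, and then tensored with $\End_\C(\rho_\sS)$. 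The front row is commutative by definition, since $e_\sS\star\J(G)\star e_\sS\subset e_\sS\star\CC(G)\star e_\sS$ is cut out by the condition $\pi(f)\in\Ee_\J$.

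First, I define $\Upsilon_{\Ee_C}$ on endomorphism rings so that the right-hand square commutes. For each tempered $G$-representation $\pi$ in $\Rep(G)_\sS$, I send an endomorphism of $\HomOver{K}{\rho}{\pi}\otimes\rho$ to the corresponding endomorphism of $\pi^\rho$ via the evaluation isomorphism of Lemma \ref{lem eval}. Proposition \ref{prop Fourier} shows that this is compatible with Fourier transforms on $H(G,K,\rho)\otimes\End(\rho)$, and hence on $\CC(\sS)\otimes\End(\rho)$ by density and continuity. The same definition restricted to all admissible modules gives $\Upsilon_{\Ee_H}$ and makes the left-hand square commute. Next, I set $\Upsilon_{\Ee_J}:=\Upsilon_{\Ee_C}|_{\Ee_{J(\sS)}\otimes\End(\rho_\sS)}$ and $\Upsilon_{J(\sS)}:=\Upsilon_{\mathcal{C}(\sS)}|_{J(\sS)\otimes\End(\rho_\sS)}$, where $J(\sS)$ is viewed inside $\CC(\sS)$ via $\tilde\phi_{\sS,q}$. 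All squares involving these maps then commute automatically, because they are restrictions of commuting squares. What remains is to show that they land in the correct subrings and are surjective onto them.

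The key claim, and the expected main obstacle, is therefore that $\Upsilon_{\Ee_C}$ carries $\Ee_{J(\sS)}\otimes\End(\rho_\sS)$ exactly onto $e_\sS\star\Ee_\J\star e_\sS$. Once this holds, the front square at $\J$ gives $\Upsilon_{J(\sS)}(J(\sS)\otimes\End)=e_\sS\star\J(G)\star e_\sS$, because $\J$ is defined by the condition $\pi(f)\in\Ee_\J$ and $\eta$ is an isomorphism by Theorem \ref{thm BK summary Hecke only}. Condition PW1' is a statement about rationality in $\nu$ with no poles on the non-strictly positive region, for families of standard modules. So I need the Morita equivalence to match three things: standard $G$-modules with standard $H(\sS)$-modules, unramified twist families with their algebraic parameters, and the positivity cones. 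The matching of standard modules is exactly Theorem \ref{thm SolleveldCompletion}. The matching of families requires checking that the twist $\nu\in\X(M)$ corresponds to an algebraic, indeed monomial, change of the central character of the Bernstein-type subalgebra of $H(\sS)$. Under this change, strictly positive characters correspond to positive real central characters in the sense of Kazhdan-Lusztig; this is where the torsion numbers $f_i$ and the parameters $q^{f_i}$ enter. Because evaluation is a fixed linear isomorphism, independent of $\nu$, on the common realization space, rationality and the absence of poles transfer in both directions. PW2 is automatic for $\rho_\sS$-isotypic endomorphisms. Finally, injectivity of every arrow from the rear pane to the front pane is inherited from the injectivity of $\Upsilon_{\Ee_C}$ and $\Upsilon_{\mathcal{C}(\sS)}$, and surjectivity onto the $\J$-terms follows from the matching just described.
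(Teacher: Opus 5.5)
Your proposal is correct and follows essentially the same route as the paper. The outer columns come from Bushnell--Kutzko/S\'echerre--Stevens types, Proposition \ref{prop Fourier} and Solleveld's Schwartz-completion isomorphism $\Upsilon_{\mathcal{C}(\sS)}$; the middle arrows are restrictions of the right-hand ones; and the decisive input for landing exactly in the $\J$-terms is Solleveld's matching of standard modules and of positivity of unramified twists. Your handling of the right-hand square by density and continuity, and of surjectivity by transferring PW1' in both directions, is if anything slightly more explicit than the paper's.
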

\begin{rem}
There are two places in the proof of Theorem \ref{thm kenta} where we appeal to \cite{SolleveldStandard}:
\begin{enumerate}
\item[(i)] 
In the logic we present below, we first need the isomorphism $\Upsilon_{\mathcal{C}(s)}$ of Schwartz completions to exist, for which we cite \cite{SolleveldCompletion}.

\item[(ii)]
Our second appeal to \cite{SolleveldCompletion} is to reconcile the notions of standard $H(\sS)$-module and standard $G(F)$-representation. This compatibility alone suffices to produce some isomorphism 
\[
J(\sS)\otimes\End_\C(\rho_\sS)\overset{\sim}{\to} e_\sS\star\J(g)\star e_\sS,
\]
but without (i), we could not see why formulas like those of Lemma \ref{lem axw formula reducible}  should relate to Schwartz functions on $G$. As we know from \cite{BHK} that ($\Upsilon$ applied to) this formula defines at least an element of $L^2(G)$, we could in principal have produced two unrelated $L^2$ functions on $G(F)$ from $t_w\in J(\sS)$. For our applications in Section \ref{section definition of the category}, it is essential to know that the functions
whose formulas are given by Lemma \ref{lem axw formula reducible} belong to $\J(G)$. Moreover, in Section \ref{subsection Formulas for GL(n,D)}, we show that these functions have in some cases Galois-side formulas that are, to our mind, interesting.

Without checking the compatibility of notions of standard representation, we do not know how to relate $J(\sS)$ and $e_\sS\star\J(G)\star e_\sS$.
\end{enumerate}

\end{rem}

\begin{rem}
The representations $\rho_\sS$ are inflations of representations of the form $\rho_\sS=\lambda\otimes\sigma_0$, where $\sigma_0$ is a direct product of cuspidal representations of finite groups of Lie type $\GL_f(\F_E)$ and $\lambda$ is the extension of a simple character in the sense of \cite{BushKutzBook}. In particular, both $\rho_0$ and $\lambda$ are trivial on subgroups of finite index in $K_\sS$. The matrix coefficients of such a representation are algebraic integers.

Thus, the Theorem means that $e_\sS\J(G)e_\sS$ has a $\C$-basis consisting of functions valued in the ring of integers of a fixed number field depending only on $\sS$.
\end{rem}

\begin{proof}[Proof of Theorem \ref{thm kenta}]
We explain the diagram. We claim that existence and commutativity of the solid diagram follows from the material already recalled; we will use this to show existence of the dashed arrows such that the entire diagram commutes.

The front pane of the diagram commutes by the operator Paley-Wiener theorems and Definition \ref{dfn big J} recalled in Section \ref{subsection Paley-Wiener}. 

That the back pane commutes and that $\eta$ is injective is, for a single tensor factor in Theorem \ref{thm BushnellKutzko}, the main result of \cite{Plancherel}; from this the general case also follows.
Surjectivity of $\eta$ is proven in \cite{rigid} (That $\eta$ is an isomorphism was proven simultaneously in \cite{BKK}, but commutativity of the back pane is not established there). The map $\tilde{\phi}$ is the specialization at $\bq=q$ of the map 
$(\phi\circ{}^\dagger(-))^{-1}$. The map $\Upsilon_H$ is as recalled in \eqref{eqn upsilon}, existence (and uniqueness) of its extension to the isomorphism $\Upsilon_{\mathcal{C}(\sS)}$ is as recalled in Section \ref{subsubsection Schwartz completions}. 

The map $\Upsilon_{\Ee_H}$ is induced by the isomorphism \eqref{eqn psi isomorphism BK} and the Morita equivalence recalled in \cite[\S 4.6]{BHK}, whose inverse is given on objects by $\pi\mapsto\HomOver{K}{\rho}{\pi}$.
Therefore commutativity of the left end square is Proposition \ref{prop Fourier}; once we know that $\Upsilon$ extends to Schwartz completions, the same calculation run only for tempered representations proves commutativity of the right end square.

Therefore it suffices to show that $\Upsilon_{\Ee_\J}$ exists such that the diagram commutes, and is an isomorphism; this will imply that $\Upsilon_{\mathcal{C}(\sS)}$ restricts to an isomorphism
\[
J(\sS)\otimes\End_\C(\rho_\sS)\to e_\sS\star\J(G)\star e_\sS.
\]
By Section \ref{subsubsection Schwartz completions}, we get an injection
\[
\Ee_{J(\sS)}\otimes\End_\C(\rho_\sS)\into e_\sS\star\Ee_{\mathcal{C}}\star. e_\sS,
\]
As recalled in Section \ref{subsusbsection standard modules}, by \cite[\S 4]{SolleveldCompletion}, the notions of positivity of unramified twists in parabolic induction agree in $\Ee_{C(\sS)}$ and $\Ee_{\CC}$, and so in fact the above injection lands in $e_\sS\star \J(G)\star e_\sS$. 
As $\Upsilon_{\Ee_{\Hs}}$ and $\Upsilon_{\CC}$ are both isomorphisms, we get that there is an isomorphism $\Upsilon_\Ee$ as claimed.

\end{proof}

Thanks to the compatibility of notions of standard modules recalled above, Lusztig's results \cite{cellsIV} for $\bq$ a formal variable, and Braverman-Kazhdan's specialization \cite[Cor. 2.6]{BK} imply
\begin{cor}
\label{cor simple J modules}
Let $G$ be an inner form of $\GL_n$. The simple nondegenerate $\J$-modules are precisely the standard $G$ representations.
\end{cor}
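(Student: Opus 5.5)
The plan is to reduce the statement block by block, using the decomposition $\J(G)=\bigoplus_\sS\J(G)_\sS$ (refining into Harish-Chandra blocks). Fix a Bernstein block $\sS$ and the type $(K_\sS,\rho_\sS)$ of Theorem \ref{thm BushnellKutzko}. The unit $e_\sS$ is a full idempotent in $\J(G)_\sS$, since Theorem \ref{thm kenta} identifies $e_\sS\star\J(G)\star e_\sS$ with $J(\sS)\otimes\End_\C(\rho_\sS)$. We will show that nondegenerate $\J(G)_\sS$-modules $M$ correspond to $e_\sS\star\J(G)\star e_\sS$-modules via $M\mapsto e_\sS M$. For this we first check that $\J(G)_\sS\star e_\sS\star\J(G)_\sS$ is all of $\J(G)_\sS$. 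One way is through the Paley-Wiener description $\Ee_\J$: every representation in the block is generated by its $\rho_\sS$-isotypic part. Nondegeneracy then ensures $e_\sS M\neq 0$ for simple $M$.

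The simple nondegenerate $\J(G)_\sS$-modules are therefore the simple modules of $J(\sS)\otimes\End_\C(\rho_\sS)$, which are the simple $J(\sS)$-modules tensored with $\rho_\sS$. Write $J(\sS)=\bigotimes_i J(e_i)$. Its simple modules are exterior tensor products of simple $J(e_i)$-modules; this uses that $J(e_i)$ has countable dimension over $\C$, so Schur's lemma holds. By Lusztig \cite{cellsIV}, for $\bq$ formal, the simple $J(e_i)$-modules are indexed by $G^\vee$-conjugacy classes of triples $(s,N,\rho)$, with $\rho$ trivial in type $A$. Braverman-Kazhdan's specialization \cite[Cor. 2.6]{BK} identifies them, under $\phi_q\circ{}^\dagger$ restriction, with the standard modules of $H_{e_i}(q^{f_i})$, meaning the $\phi_q$-pullback of each simple $J$-module is a standard module and every standard module arises this way. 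Tensor products of standard modules for the factors are exactly the standard $H(\sS)$-modules.

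Finally we transport along the Morita equivalence \eqref{eqn BK type equivalence}. Commutativity of the left square of \eqref{eqn BK summary diagram} ensures that the $\J$-module obtained from a simple $J(\sS)$-module restricts to $\Hs(G)$ as the $G$-representation attached to the underlying $H(\sS)$-module. Theorem \ref{thm SolleveldCompletion} then says the standard $H(\sS)$-modules correspond exactly to the standard $G(F)$-representations in $\Rep(G)_\sS$, which gives the claim. We also need that a standard $G$-representation carries a $\J$-module structure extending its $\Hs$-action. This holds by definition of $\Ee_\J$ (condition PW1'), and it is compatible with the structure coming from $J(\sS)$ because both are given by the Fourier transform through $\eta$.

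The main obstacle is the first step: the Morita reduction to $e_\sS\star\J(G)\star e_\sS$ inside the non-unital ring $\J(G)$. Fullness of $e_\sS$ is not formal, since $\J(G)_\sS$ is larger than $\Hs(G)_\sS$. I would derive it from the fact that $\Hs(G)_\sS=\Hs\star e_\sS\star\Hs$ and $\Hs(G)\subset\J(G)$ acts nondegenerately on $\J(G)_\sS$. Concretely, for each $f\in\J(G)_\sS$ there is an idempotent $e\in\Hs(G)_\sS$ with $f=e\star f$, and $e$ lies in $\Hs\star e_\sS\star\Hs$.
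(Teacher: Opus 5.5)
Your proposal is correct and takes the same route as the paper: Theorem \ref{thm kenta} to pass to $J(\sS)\otimes\End_\C(\rho_\sS)$, then Lusztig's classification together with Braverman--Kazhdan's specialization \cite[Cor. 2.6]{BK}, then Theorem \ref{thm SolleveldCompletion} to match standard modules; you also spell out correctly the Morita reduction inside $\J(G)_\sS$, which the paper leaves implicit. One small repair: countable dimension alone does not force simple modules of $\bigotimes_i J(e_i)$ to be exterior tensor products (this already fails for the Weyl algebras $A_1\otimes A_1$), so instead use that each $J(e_i)$ is finite over its finitely generated centre, by Xi's description recalled in Proposition \ref{prop J Morita direct sum}; hence all its simple modules are finite-dimensional and the exterior tensor product description holds.
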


\section{Explicit formulas}
\label{section formulas}

\subsection{Formulas for the principal block}
\label{subsectionPrincipalFormulasRecap}
By Theorems \ref{thm BK summary Hecke only} and \ref{thm kenta}, and Lemma \ref{lem axw formula reducible}, specializing $a_{\gamma,\lambda}$ at various $\bq=q^f$ gives formulas for the functions in $\J(G)$. We first recall an explicit formula for certain $a_{x,w}(\bq)$ when $x$ and $w$ are (sufficiently, for the former) dominant translation elements, which we use in the sequel to describe certain functions in $\J(G)$ in terms of $L$-parameters.

To this end, in this section we recall some results of \cite{Plancherel}, \cite{positivity} in the special case of $H$ be an affine Hecke algebra of type $\GL_e$ over $\Z[\bq^{1/2}, \bq^{-1/2}]$. For notation beyond what we recalled in Section \ref{subsubsection AHA conventions}, we refer to \cite[\S 2.1]{positivity}, where we proved
\begin{theorem}[\cite{positivity}, Thm. 1]
\label{thmPrincipalRecap}
Let $d, d'$ be distinguished involutions in the lowest two-sided cell $\cc_0$ corresponding to $u,u'\in W$ via Shi's parametrization \cite{Shi}, let $t_w\in t_d Jt_{d'}$ correspond to a dominant weight $\lambda$ under the same. 
Let $\sett{\Oo(x_u)}_{u\in W}$ be Steinberg's $K(\pt/\widetilde{G^\vee})$-basis of $K(G^\vee/B^\vee/\widetilde{G^\vee})$.
If the dual class to $\Oo(x_{u'})$ with respect to the pairing
\cite[Eq. (7)]{positivity} is represented by a shifted line bundle $\Oo(y_{u'})[n(u')]$, then for all $\gamma$ sufficiently dominant, we have
\begin{align}
a_{\gamma,\lambda}(\bq)
&=
(-1)^{\ell(\omega(\gamma)_f)+\ell(w_0)+n(d')}
\frac{\bq^{-\frac{\ell(\gamma)}{2}}}{P_W(\bq)}
\sum_{i}\dim\HomOver{G^\vee}{V(\lambda)}{V(\gamma-x_{u}-y_{u'}-2\rho)\otimes\Oo(\mathcal{N}^\vee)_i}\bq^{-i},
\label{tdJtd' dominant formula denom}
\end{align}
where $\Oo(\mathcal{N}^\vee)_i$ is the space of homogeneous degree $i$ global functions on $\mathcal{N}^\vee$.
\end{theorem}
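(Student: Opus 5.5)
We would prove the formula by computing $a_{\gamma,\lambda}(q)$ as a Fourier coefficient, using the commutative diagram of Theorem \ref{thm BK summary Hecke only}. There, $f_w=\sum_x a_{x,w}(q)T_x$ is the inverse Fourier transform of $\eta(t_w)$. So $a_{\gamma,\lambda}(q)$ is, up to the normalization $q^{-\ell(\gamma)}$ of the trace, the Plancherel integral of $\operatorname{tr}\bigl(\eta(t_w)\pi(T_\gamma)^*\bigr)$. We would then treat $\bq$ as a formal variable at the end, by rationality in $q$.

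\textbf{Step 1 (the lowest cell).} Since $t_w\in t_dJt_{d'}$ lies in the lowest two-sided cell $\cc_0$, we use Xi's description $J_{\cc_0}\simeq\Mat_{W}\bigl(K^{G^\vee}(\pt)\bigr)$, in the form where rows and columns are indexed through Shi's parametrization by $u,u'\in W$. Under it, $t_w$ corresponds to $V(\lambda)\otimes E_{u,u'}$. The lowest-cell summand of $\eta$ acts only on the unramified principal series $\pi_\nu=i_B^G(\nu)$, $\nu\in T^\vee$. Via the Kazhdan--Lusztig/Chriss--Ginzburg realization $\pi_\nu\simeq K(G^\vee/B^\vee)_\nu$, $\eta(t_w)$ acts on $\pi_\nu$ as $\chi_{V(\lambda)}(\nu)$ times the rank-one operator $\Oo(x_{u'})^\vee\otimes\Oo(x_u)$ in Steinberg's basis. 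Its dual vector is represented by the shifted line bundle $\Oo(y_{u'})[n(u')]$, and this is the source of the sign $(-1)^{n(d')}$.

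\textbf{Step 2 (Macdonald's formula).} Since only the principal series contribute, the Plancherel measure is Macdonald's density $\frac{1}{|W|\,P_W(q^{-1})}\prod_{\alpha}\frac{1-\nu^{\alpha}}{1-q^{-1}\nu^{\alpha}}\,d\nu$ on the compact torus. For $\gamma$ dominant, the matrix coefficients of $T_\gamma$ in a Casselman/Bernstein basis are given by Macdonald's formula, namely a sum over $W$ of $c$-function ratios times $\nu^{w\gamma}$. Pairing this against the line bundles $\Oo(x_u)$ and $\Oo(y_{u'})$ turns the integrand into $W$-antisymmetrizations. After the Weyl integration formula, the integral becomes the multiplicity of $V(\lambda)$ in
\[
\sum_{w\in W}(-1)^{\ell(w)}\frac{e^{w(\gamma-x_u-y_{u'}-\rho)}}{\prod_{\alpha>0}(1-q^{-1}e^{-w\alpha})\,(\text{Weyl denominator})}.
\]
The shift $-2\rho$, the sign $(-1)^{\ell(w_0)+\ell(\omega(\gamma)_f)}$ and the factor $\bq^{-\ell(\gamma)/2}/P_W(\bq)$ come from the normalizations of ${}^\dagger$, of the trace, and of $\eta$ in \cite{Plancherel}.

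\textbf{Step 3 (geometric identification).} By Brylinski--Hesselink, $\Oo(\mathcal{N}^\vee)=\Gamma\bigl(G^\vee\times^{B^\vee}\mathfrak{n}^\vee,\Oo\bigr)$ with vanishing higher cohomology. Hence the graded character $\sum_i\Oo(\mathcal{N}^\vee)_i\,\bq^{-i}$ tensored with $V(\mu)$ equals the Borel--Weil--Bott (Weyl) expansion of $\prod_{\alpha>0}(1-\bq^{-1}e^{-\alpha})^{-1}e^{\mu}$. Taking $\mu=\gamma-x_u-y_{u'}-2\rho$ then yields \eqref{tdJtd' dominant formula denom}.

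\textbf{Main obstacle.} The hardest step is justifying that, for $\gamma$ sufficiently dominant, the Macdonald sum combines cleanly: $\mu$ must be dominant enough that higher cohomology vanishes and no boundary or degenerate $c$-function terms appear. We would first try to choose $\gamma$ so that $\gamma-x_u-y_{u'}-2\rho$ lies deep in the dominant chamber for all $u,u'$, and then check directly that each Bott term is nonzero only for $w=1$. A secondary difficulty is tracking the signs $n(u')$ and $\ell(\omega(\gamma)_f)$ through ${}^\dagger$.
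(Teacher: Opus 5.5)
Your overall frame is sound: you compute $a_{\gamma,\lambda}(q)$ as $q^{-\ell(\gamma)}$ times a Plancherel integral of $\operatorname{tr}\bigl(\eta(t_w)\pi(T_\gamma)^*\bigr)$, you restrict to the unitary unramified principal series, and you use Xi's matrix description in Steinberg's basis. But Steps 2 and 3 compute the wrong quantity, and the identity you invoke in Step 3 is false.

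\textbf{The Step 3 identity fails.} The Weyl (Borel--Weil--Bott) expansion of $e^{\mu}\prod_{\alpha>0}(1-\bq^{-1}e^{\mp\alpha})^{-1}$ is the graded Euler characteristic of a single line bundle $\Oo(\mu)$ on the Springer resolution. By contrast, $V(\mu)\otimes\Oo(\mathcal{N}^\vee)$ is the sections of the trivial bundle with fibre $V(\mu)$, which is filtered by the $\Oo(\nu)$ for all weights $\nu$ of $V(\mu)$. The correct statement needs $\operatorname{ch}V(\mu)$ in place of $e^{\mu}$. Concretely, take $G^\vee=\SL_2$ and $\mu=\varpi$. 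The degree-one part of $V(\mu)\otimes\Oo(\mathcal{N}^\vee)$ is $V(\varpi)\otimes V(2\varpi)=V(3\varpi)\oplus V(\varpi)$. The degree-one part of the Weyl expansion of $e^{\varpi}\prod_{\alpha>0}(1-\bq^{-1}e^{\mp\alpha})^{-1}$ is $0$ or $\chi_{V(3\varpi)}$, depending on the sign convention. So the multiplicity of $V(\varpi)$ already disagrees (it is $1$ versus $0$). Your Step 2 output has exactly this single-exponential shape, modelled on Macdonald's spherical-function formula with a product over positive roots only. So the error is not just in the wording of Step 3: the Plancherel integral of a trace does not take that form. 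The ``main obstacle'' you identify (degenerate $c$-function terms) is therefore a symptom of the wrong route.

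\textbf{The missing observation.} The integrand $\nu\mapsto\operatorname{tr}\bigl(\eta(t_w)(\nu)\,\pi_\nu(T_\gamma)^*\bigr)$ is $W$-invariant and polynomial in $\nu$, i.e.\ a virtual character of $G^\vee$. In Steinberg's basis of the free $R(G^\vee)$-module $K^{G^\vee}(\Bb)$, both $\eta(t_w)$ and the action of $\HH$ have entries in $R(G^\vee)[\bq^{\pm\frac12}]$. For $\gamma$ dominant, $T_\gamma$ is $\bq^{\ell(\gamma)/2}$ times a Bernstein element, which acts through a line bundle. So, up to conventions for ${}^\dagger$ and duals, the integrand is $\chi_{V(\lambda)}(\nu)$ times an Euler characteristic of a line bundle on $\Bb$. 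Borel--Weil--Bott turns this into $\pm\chi_{V(\gamma-x_u-y_{u'}-2\rho)}(\nu)$. The only role of ``sufficiently dominant'' is to make this weight dominant for the finitely many pairs $u,u'$; no Casselman basis, and hence no $c$-function poles, ever enters.

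\textbf{The key identity.} After the Weyl integration formula, Macdonald's density leaves the $W$-invariant weight $\prod_{\alpha\in\Phi}(1-q^{-1}\nu^{\alpha})^{-1}$, taken over \emph{all} roots. What you need here is Kostant's theorem $S(\g^\vee)\simeq S(\g^\vee)^{G^\vee}\otimes\Oo(\mathcal{N}^\vee)$. It gives $\prod_{\alpha\in\Phi}(1-te^{\alpha})^{-1}=P_W(t)^{-1}\sum_i t^i\operatorname{ch}\Oo(\mathcal{N}^\vee)_i$, and this is where both $\Oo(\mathcal{N}^\vee)$ and the Poincar\'e polynomial come from.

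\textbf{The density constant.} Your constant is also wrong. For $\tau(T_w)=\delta_{w,1}$, the principal series carry total mass $\tau(e_K)=P_W(q)^{-1}$, where $e_K=P_W(q)^{-1}\sum_{w\in W}T_w$. Macdonald's constant term identity gives $\frac{1}{|W|}\int\prod_{\alpha\in\Phi}\frac{1-\nu^\alpha}{1-q^{-1}\nu^\alpha}\,d\nu=P_W(q^{-1})^{-1}$. Hence the density constant is $q^{-\ell(w_0)}/|W|$, not $1/(|W|P_W(q^{-1}))$. Combine this with Kostant's factor $P_W(q^{-1})^{-1}$ and with $q^{-\ell(\gamma)}\cdot q^{\ell(\gamma)/2}$. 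Since $q^{-\ell(w_0)}P_W(q^{-1})^{-1}=P_W(q)^{-1}$, this yields exactly the prefactor $\bq^{-\ell(\gamma)/2}/P_W(\bq)$ in the statement.
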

By \cite[Cor. 2]{positivity}, the hypotheses of Theorem \ref{thmPrincipalRecap} always hold for $d'$ the canonical distinguished involution. 

\subsection{Formulas for $\GL_n(D)$: Discrete support equal to supercuspidal support}
\label{subsection Formulas for GL(n,D)}

First consider a Bernstein block containing discrete series, \textit{i.e.} with $\sS=[L,\sigma_L]_G$ with $L=\GL_{\frac{N}{e}}(D)^{\times e}$ and $\sigma_L=\sigma^{\boxtimes e}$ for $\sigma$ a supercuspidal representation of $\GL_{\frac{N}{e}}(D)$. Let
\[
\phi_\sigma\colon W_F\to\GL_{\frac{N}{e}}(\C)
\]
be the corresponding irreducible representation of $W_F$, and consider the parameter 
\[
\phi_{\sigma_L}=\phi_\sigma^{\boxtimes e}\colon W_F\to\GL_{\frac{N}{e}}(\C)^{\times e}\into\GL_N(\C)
\]
of $\sigma_L$. Then by irreducibility of $\phi_{\sigma_L}$,
\[
S_{\phi_{\sigma_L}}:=Z_{G^\vee(\C)}(\phi_{\sigma_L})\simeq\GL_{e}(\C).
\]
Clearly, $S_{\phi_{\sigma_L}}$ depends only on $\sS$, we will write it $S_{\phi_\sS}$.

Thus the formulas recalled in Section \ref{subsectionPrincipalFormulasRecap} gain the following interpretation. Let $t_w$ belong to a subring of the lowest two-sided cell summand of 
\[
J(H_\sS)\otimes\id_{\rho_\sS}\into e_{\sS}\star J\star e_{\sS}
\]
This is the summand of $J(\sS)$ acting on representations of the form 
\begin{equation}
\label{eqn discr equals ss support}
i_{P}^G(\sigma_L\otimes\nu);
\end{equation}
the compatibility with parabolic induction of the types we consider means that
\eqref{eqn discr equals ss support} corresponds to an $H(\sS)$-module induced from the minimal (\textit{i.e.} toral) parabolic subalgebra of $H(\sS)$. Thus, when \eqref{eqn discr equals ss support} extends to a $\J$-module, it corresponds to a $J(\sS)$-module attached to the lowest two-sided cell. Therefore, this case of coinciding discrete and supercuspidal supports is one case where we are able to give explicit formulas. 

Let $u\otimes u^*\in\End_\C(\rho_\sS)$. As recalled at the start of Section \ref{subsectionPrincipalFormulasRecap}, specializing the $a_{\gamma,\lambda}$ gives formulas for functions in $\J(G)$. Therefore by Theorem \ref{thmPrincipalRecap}, we have
\begin{lem}
\label{lem formulas homog block}
Let $t_w\in t_d Jt_{d'}$ correspond to a dominant translation element $\lambda\in\Waff(\sS)$ and satisfy the hypotheses of Theorem \ref{thmPrincipalRecap}. Then we have
\begin{multline*}
\Upsilon\left(t_w\otimes \left(u\otimes u^*\right)\right)(K_\sS \varpi^\gamma K_\sS)
=
(-1)^{\ell(\omega(\gamma)_f)+\ell(w_0)+n(d')}
\dim(\rho_\sS)\pair{T_\gamma(u^*)}{u}_{\rho_\sS}
\cdot 
\\
\frac{q^{-\frac{\ell(\gamma)f_\sS}{2}}}{P_{W(\sS)}(q^{f_\sS})}
\sum_{i}\dim\HomOver{S_{\phi_{\sigma_L}}}{V(\lambda)}{V(\gamma-x_{u}-y_{u'}-2\rho)\otimes\Oo(\mathcal{N}^\vee_{\sigma_L})_i}q^{-if_\sS},
\end{multline*}
for all $\gamma\in\Waff(\sS)$ sufficiently dominant,
where $\mathcal{N}^\vee_{\sigma_L}$ is the nilpotent cone of $S_{\phi_{\sigma_L}}$ and $f_\sS$ is the torsion number of $\sigma$.
\end{lem}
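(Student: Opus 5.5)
The plan is to chase the element $t_w\otimes(u\otimes u^*)$ through the diagram of Theorem \ref{thm kenta}, write it as an explicit function on $G(F)$, and then read off the coefficient from Theorem \ref{thmPrincipalRecap}.

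First, in this block Theorem \ref{thm BushnellKutzko}(b) gives $H(\sS)=H_e(q^{f_\sS})$, a single affine Hecke algebra of type $\GL_e$. Theorem \ref{thm kenta} identifies $\Upsilon(t_w\otimes e)$ with $\Upsilon_{\mathcal{C}(\sS)}(\tilde\phi_{\sS,q}(t_w)\otimes e)$. By Theorem \ref{thm BK summary Hecke only}(2) we have $\tilde\phi_{\sS,q}(t_w)=\sum_x a_{x,w}(q^{f_\sS})T_x$ in $\mathcal{C}(\sS)$, converging in the Schwartz topology. Since $\Upsilon_{\mathcal{C}(\sS)}$ is continuous and extends $\Upsilon_H$, we get
\[
\Upsilon(t_w\otimes(u\otimes u^*))=\sum_x a_{x,w}(q^{f_\sS})\,\Upsilon_H(\Psi^{-1}(T_x)\otimes(u\otimes u^*)).
\]
Point evaluation on the locally constant functions involved is continuous, so we may evaluate this termwise.

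Second, we evaluate each term at $\varpi^\gamma$, using the support properties of the Bushnell--Kutzko and S\'{e}cherre--Stevens isomorphism $\Psi$. The element $\Psi^{-1}(T_x)$ is supported on the single double coset $K_\sS\varpi^x K_\sS$, where $x\in\Waff(\sS)$ is realized inside the normalizer of $L$. Hence, after fixing representatives, only $x=\gamma$ contributes at $\varpi^\gamma$. For $\gamma$ a translation element this requires the double cosets $K_\sS\varpi^xK_\sS$ to be distinct; we would take this from \cite{BushKutzBook} and \cite{SecherreStevensIV}. The formula \eqref{eqn upsilon} for $\Upsilon$ then gives
\[
\Upsilon_H(\Psi^{-1}(T_\gamma)\otimes(u\otimes u^*))(\varpi^\gamma)=\dim(\rho_\sS)\pair{T_\gamma(u^*)}{u},
\]
where $T_\gamma(u^*)$ denotes $\Psi^{-1}(T_\gamma)(\varpi^\gamma)^t$ applied to $u^*$.

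Third, we substitute Theorem \ref{thmPrincipalRecap} for $a_{\gamma,w}(q^{f_\sS})$, specializing $\bq=q^{f_\sS}$. The hypotheses on $d,d',\lambda$ are exactly those assumed in the statement. It remains to translate the dual-group data. Because $\phi_{\sigma_L}$ is irreducible on each factor, $S_{\phi_{\sigma_L}}\simeq\GL_e(\C)$, which is the Langlands dual of the group of type $\GL_e$ underlying $H_e$. Under this identification $V(\lambda)$, the nilpotent cone $\mathcal{N}^\vee$, and $P_W$ become the corresponding objects for $S_{\phi_{\sigma_L}}$ and $W(\sS)$. The length function is that of $\Waff(\sS)$, so the factor $q^{-\ell(\gamma)f_\sS/2}$ arises from the parameter $q^{f_\sS}$. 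This produces the asserted formula.

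I expect the main obstacle to be the second step. We need the precise normalization of $\Psi$: that $T_\gamma$ corresponds to a function supported on one double coset, and what its value at $\varpi^\gamma$ is. The statement packages this value into the notation $\pair{T_\gamma(u^*)}{u}$, so the remaining risk is mainly bookkeeping of signs and of the scalars relating the trace forms, as recalled after Theorem \ref{thm BushnellKutzko}. Everything else is direct substitution.
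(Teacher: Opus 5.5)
Your proposal is correct and is essentially the paper's argument. The paper's proof consists only of the remark that, by Theorem \ref{thm kenta}, one applies $\Upsilon$ to the expansion $\tilde{\phi}_{\sS,q}(t_w)=\sum_x a_{x,w}(q^{f_\sS})T_x$ and reads off $a_{\gamma,\lambda}(q^{f_\sS})$ from Theorem \ref{thmPrincipalRecap}; your steps (termwise evaluation via continuity of $\Upsilon_{\mathcal{C}(\sS)}$, the support-preserving property of $\Psi$ isolating the $x=\gamma$ term, and the identification $S_{\phi_{\sigma_L}}\simeq\GL_e(\C)$) spell that out, with one cosmetic slip: since $f=\Psi^{-1}(T_\gamma)$ takes values in $\End_\C(\rho^*)$, the factor is $\dim(\rho_\sS)\pair{u^*}{f(\varpi^\gamma)^t u}=\dim(\rho_\sS)\pair{f(\varpi^\gamma)u^*}{u}$, so $T_\gamma(u^*)$ should be $f(\varpi^\gamma)$ applied to $u^*$, without the transpose.
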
 
In particular, for $t_w\in J(H_\sS)\otimes\id_{\rho_\sS}$, the matrix coefficient $\pair{T_\gamma(u^*)}{u}_{\rho_\sS}$ is replaced with $\trace{\rho_\sS}{T_\gamma}$.
%



\begin{proof}
By Theorem \ref{thm kenta}, we need only apply $\Upsilon_{\J(\sS)}$ to the formula in Theorem \ref{thmPrincipalRecap}.
\end{proof}

Now consider a general block with 
\[
\sS=\left[\prod_{i=1}^{m}\GL_{n_i}(D)^{\times e_i}, \sigma_L=\boxtimes\sigma_i^{\boxtimes e_i}\right]
\]
and the corresponding parameter 
\[
\phi_L=(\phi_1^{\boxtimes e_i},\dots,\phi_m^{\boxtimes e_m})\colon W_F\to \GL_
N(\C).
\]
As each 
\[
\phi_i\colon W_F\to \GL_{n_i}(\C)
\]
is irreducible, and, as $\omega_i$ is not an unramified twist of $\omega_j$ for $i\neq j$, $\phi_i$ is not conjugate to $\phi_j$ even if $n_i=n_j$, we have 
\[
S_{\phi_L}=\prod_{i=1}^{r}\GL_{e_i}(\C).
\]
Recalling Lemma \ref{lem axw formula reducible} and our notation for reducible root systems, we can state the main result of this section.
\begin{theorem}
\label{thm formulas sc supp general}
Let $w=(w_1,\dots, w_r)$ with each $w_i$ belonging to the lowest two-sided cell of $\Waff(\GL_{e_i})$ and each $t_{w_i}=t_{d_i}J(W(\GL_{e_i}))t_{d_i'}$  corresponding to a dominant translation element $\lambda_i$ satisfying the hypotheses of Theorem \ref{thmPrincipalRecap}. Let $t_w=t_{w_1}\otimes\cdots \otimes t_{w_r}$. Then we have, for $\gamma=(\gamma_1,\dots,\gamma_m)$ a translation element with each $\gamma_i$ sufficiently dominant,
\begin{align}
&\Upsilon\left(t_w\otimes \left(u\otimes u^*\right)\right)(K_\sS \varpi^\gamma K_\sS)
\\
&=
\prod_{k=1}^{r}\left((-1)^{\ell(\omega(\gamma_k)_f)+\ell(w_{k,0})+n(d_k')}
\frac{q^{-\frac{\ell(\gamma_k)f_k}{2}}}{P_{W_k}(q^{f_k})}
\right)
\dim(\rho_\sS)
\pair{T_\gamma(v^*)}{v}_{\rho_\sS}
\cdot 
\\
&\cdot
\sum_{i}
\sum_{\substack{(j_1,\dots, j_r) \\ j_1f_1+\cdots+j_rf_r=i}}
\dim\HomOver{S_\phi}{V(\lambda)}{V(\gamma-x_u-y_{u'}-2\rho)\otimes\Oo(\mathcal{N}_1^\vee)_{j_1}\otimes\cdots\otimes\Oo(\mathcal{N}^\vee_r)_{j_r}}q^{-i},
\end{align}
where $\Nn^\vee_1\times\cdots\times\Nn^\vee_r=\Nn_{S_\phi}$ is the nilpotent cone of $S_\phi$, and 
\[
V(\gamma-x_u-y_{u'}-2\rho):=V(\lambda_1-x_{u_1}-y_{u_1}-2\rho_1)\boxtimes\cdots\boxtimes V(\lambda_r-x_{u_r}-y_{u_r}-2\rho_r)
\]
and likewise for $V(\lambda)$. 
\end{theorem}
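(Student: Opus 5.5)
The plan is to reduce the statement to the single-factor computation already available, using the tensor decomposition of Section \ref{subsubsubsection reducible root data}. By Theorem \ref{thm kenta}, $\Upsilon_{\J(\sS)}$ restricted to $J(\sS)\otimes\End_\C(\rho_\sS)$ lands in $e_\sS\star\J(G)\star e_\sS$. Commutativity of the diagram \eqref{eqn BK summary diagram} then identifies $\Upsilon(t_w\otimes(u\otimes u^*))$ with $\Upsilon_{\mathcal{C}(\sS)}$ applied to the Schwartz element $\tilde\phi_{\sS,q}(t_w)\otimes(u\otimes u^*)$. By Theorem \ref{thm BK summary Hecke only}(2), applied factorwise with parameters $q^{f_k}$, this element is $\sum_x a_{x,w}(q)T_x$. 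By Lemma \ref{lem axw formula reducible}, for $t_w=t_{w_1}\otimes\cdots\otimes t_{w_r}$ we have $a_{x,w}=\prod_k a_{x_k,w_k}(q^{f_k})$, so the expansion is a product of single-factor expansions.

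To evaluate at the double coset $K_\sS\varpi^\gamma K_\sS$, I would use the explicit form of $\Upsilon$ from \eqref{eqn upsilon}. Under $\Psi$ of \eqref{eqn psi isomorphism BK}, the function $T_x$ is supported on the double coset attached to $x\in\Waff(\sS)=\prod_k\Waff(\GL_{e_k})$, and these cosets are disjoint. Hence only the single term $x=\gamma$ contributes, and $\Upsilon(T_\gamma\otimes(u\otimes u^*))(\varpi^\gamma)=\dim(\rho_\sS)\pair{T_\gamma(u^*)}{u}_{\rho_\sS}$. This is the same mechanism as in the proof of Lemma \ref{lem formulas homog block}, so the value becomes
\[
\dim(\rho_\sS)\pair{T_\gamma(u^*)}{u}_{\rho_\sS}\prod_{k}a_{\gamma_k,\lambda_k}(q^{f_k}).
\]

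Next I would substitute Theorem \ref{thmPrincipalRecap} for each factor at $\bq=q^{f_k}$; this needs every $\gamma_k$ sufficiently dominant, which is the hypothesis. The prefactors $(-1)^{\cdots}q^{-\ell(\gamma_k)f_k/2}/P_{W_k}(q^{f_k})$ multiply directly. For the remaining factors I would multiply the generating functions
\[
\sum_{j_k}\dim\HomOver{\GL_{e_k}}{V(\lambda_k)}{V(\gamma_k-x_{u_k}-y_{u'_k}-2\rho_k)\otimes\Oo(\Nn_k^\vee)_{j_k}}q^{-j_kf_k}.
\]
Two standard facts then give the stated formula. First, for $S_\phi=\prod_k\GL_{e_k}(\C)$, Hom spaces between external tensor products factor: $\Hom_{S_\phi}(\boxtimes A_k,\boxtimes B_k)=\bigotimes\Hom_{\GL_{e_k}}(A_k,B_k)$. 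Second, $\Oo(\Nn_{S_\phi})=\bigotimes_k\Oo(\Nn_k^\vee)$ as graded rings. Collecting the product of the sums by total exponent $i=\sum_kj_kf_k$ yields the double sum in the statement.

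The main obstacle I expect is the support claim in the second step. It requires that $\Psi$ matches $T_x$ with a function supported exactly on $K_\sS\varpi^xK_\sS$, with value at $\varpi^\gamma$ normalized so that the matrix coefficient is $\pair{T_\gamma(u^*)}{u}$ and no extra scalar appears. It also requires that the normalizations of the Bushnell--Kutzko/S\'echerre--Stevens isomorphism are compatible with the tensor product over the blocks $i$. I would check this against the description of the support of $H(G,K_\sS,\rho_\sS)$ in \cite{BushKutzBook} and \cite{SecherreStevensIV}, and otherwise absorb any discrepancy into the notation $T_\gamma(u^*)$, exactly as in Lemma \ref{lem formulas homog block}.
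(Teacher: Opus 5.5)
Your proposal is correct and takes the same route as the paper. The paper's proof is just that of Lemma \ref{lem formulas homog block}: use Theorem \ref{thm kenta} to identify $\Upsilon(t_w\otimes(u\otimes u^*))$ with the image of $\sum_x a_{x,w}T_x\otimes(u\otimes u^*)$, factor $a_{x,w}$ by Lemma \ref{lem axw formula reducible}, and substitute Theorem \ref{thmPrincipalRecap} in each factor at $\bq=q^{f_k}$. You additionally make explicit steps the paper leaves implicit: isolating the single coset $x=\gamma$, factoring $\mathrm{Hom}$ over $S_\phi=\prod_k\GL_{e_k}(\C)$, and regrouping by the weighted degree $i=\sum_k j_kf_k$. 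You also correctly read past the statement's typos, namely $v$ for $u$, and $\lambda_k$ and $y_{u_k}$ for $\gamma_k$ and $y_{u'_k}$ in the definition of $V(\gamma-x_u-y_{u'}-2\rho)$.
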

Here, the $x_{u_i}$ and $y_{u_i}$ are as in Theorem \ref{thmPrincipalRecap}.
The proof is the same as for Lemma \ref{lem formulas homog block}.

%
%
%

\begin{rem}
For other discrete supports, similar formulas seem likely, and for values at $1\in G$, follow from \cite[\S 5]{positivity}.
\end{rem}

\subsection{Formulas for $\GL_n(D)$: Discrete Harish-Chandra blocks}
Suppose that $\Rep(G)_\sS$ contains an essentially discrete series representation of $G(F)$. Then by Theorem \ref{thm BushnellKutzko}, $\Waff(\sS)=\Waff(\GL_e)$ and the equivalences of the theorems preserve essentially square-integrable representations. In particular, the unique essentially square-integrable representation $\St(\nu)$ in $\Rep(G)_\sS$ up to unramified twist corresponds to the Steinberg representations $\St(\omega)$ of $\HH(\sS)$ for $\omega\in\Z=\pi_1(\GL_e)$. Combining Theorem \ref{thm kenta} and \cite[Cor. 3.19]{Plancherel}, we have
\begin{prop}
\label{prop mc of discrete series formula}
\[
\Upsilon_{J(\sS)}\left(t_1\otimes (u\otimes u^*)\right)(K_\sS wK_\sS)=
\frac{1}{P_{W(\GL_e)}(q^f)}(-1)^{\ell(w)}q^{-f\ell(w)}\pair{T_w(u^*)}{u}_{\rho_\sS}
\]
for any $w\in\Waff(\sS)$, where $f$ is the corresponding torsion number. 
\end{prop}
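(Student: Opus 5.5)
The plan is to transport the known formula for $t_1$ in the affine Hecke algebra $H_e(q^f)$ of type $\GL_e$ to $G(F)$ through the isomorphism $\Upsilon_{J(\sS)}$ of Theorem \ref{thm kenta}. The work is essentially bookkeeping.

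First I would recall \cite[Cor. 3.19]{Plancherel}. For type $\GL_e$ with parameter $\bq$, the element $t_1$ is the unit of $J$ on the lowest cell. Its image under $\tilde{\phi}$, namely $f_1=\sum_x a_{x,1}T_x\in\CC(H)$, is a multiple of the projector onto the Steinberg (discrete series) summand. Concretely, $a_{x,1}(\bq)=\frac{(-1)^{\ell(x)}\bq^{-\ell(x)}}{P_W(\bq)}$ for all $x\in\Waff$; this matches the formal-degree expression for the Steinberg matrix coefficient. Specializing at $\bq=q^f$ (here Theorem \ref{thm BushnellKutzko}(b) says only one tensor factor $H_e(q^f)$ occurs) gives the scalar coefficients of $T_x$.

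Second, by the commutativity of the diagram in Theorem \ref{thm kenta}, $\Upsilon_{J(\sS)}(t_1\otimes e)=\Upsilon_{\mathcal{C}(\sS)}(\tilde{\phi}_{\sS,q}(t_1)\otimes e)$. Because $\Upsilon_{\mathcal{C}(\sS)}$ is the continuous extension of $\Upsilon_H$, this equals $\sum_x a_{x,1}(q^f)\,\Upsilon_H(T_x\otimes e)$, with the series converging in $\CC(G)$. The double cosets $K_\sS wK_\sS$ for different $w\in\Waff(\sS)$ are disjoint and carry the supports of the elements $\Psi^{-1}(T_w)$. So evaluating on $K_\sS wK_\sS$ picks out only the term $x=w$, and pointwise evaluation is justified since convergence in $\CC(G)^{K}$ implies pointwise convergence. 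The remaining task is to compute $\Upsilon_H(T_w\otimes(u\otimes u^*))$ at the representative of $K_\sS wK_\sS$. From the formula for $\Upsilon$ in \eqref{eqn upsilon}, this is $\dim(\rho_\sS)\pair{T_w(u^*)}{u}_{\rho_\sS}$, where $T_w$ also denotes the value at the representative of the function $\Psi^{-1}(T_w)\in H(G,K_\sS,\rho_\sS)$.

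The main obstacle is normalization. The statement contains no factor $\dim(\rho_\sS)$ and no volume, so I must pin down the normalizations of $\Psi$ (the Bushnell–Kutzko/Sécherre–Stevens isomorphism) and of Haar measure. The factor $\dim\rho_\sS$ and $\vol(K_\sS)$ are absorbed through the normalization of $e_\sS$ and the trace comparison in Section \ref{subsusbsection standard modules}. I would fix these first by checking the case $w=1$ against the formal degree of $\St$ from Theorem \ref{thm BushnellKutzko}(b). I would also check that $\Psi$ carries the support element $T_w$ to a function supported exactly on $K_\sS wK_\sS$ with value an invertible operator. This follows from the explicit support description in \cite{BushKutzBook} and \cite{Secherre}, via the Iwahori-type decomposition of the $G$-intertwining of $\rho_\sS$. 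Finally, the sign $(-1)^{\ell(w)}$ should be confirmed to be compatible with the twist ${}^\dagger$ appearing in $\phi_{\sS,q}\circ{}^\dagger(-)$, using ${}^\dagger C_w=\pm C'_w$ from Section \ref{subsubsection AHA conventions}.
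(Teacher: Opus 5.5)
Your route is the paper's route. The paper's proof is just Theorem \ref{thm kenta} combined with \cite[Cor.~3.19]{Plancherel}, and your evaluation on disjoint double cosets, with pointwise evaluation justified by continuity, simply unpacks that. However, the key input, as you state it, is false, and since the paper cites the same result, the displayed formula is affected too.

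For $\GL_e$, the length-zero elements form $\Omega\cong\Z$. The element $1$ lies in the two-sided cell $\Omega$ of $a$-value $0$; it is not in the lowest cell $\cc_0$, which is the cell attached to principal series. The summand is $J_\Omega\cong\C[\Omega]$, and $t_1$ is its unit (not the unit of $J$). Hence $\eta(t_1)$ is the identity on the whole family $\St\otimes\chi$, for $\chi$ a character of $\Omega$. The inverse Fourier transform of $\eta(t_1)$ therefore integrates the Steinberg coefficients over unitary $\chi$, and this kills every $T_x$ with $\omega(x)\neq 0$. So $a_{x,1}=0$ unless $x\in W_f\ltimes\Z\Phi^\vee$.

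The case $e=1$ makes this unavoidable. There $J=H=\C[\Z]$ and $t_1=T_1$ by Example \ref{ex algebraic H=J sc}, whereas your formula would give $a_{x,1}=1$ for every $x\in\Z$. For the same reason, coefficients that are constant along $\Omega$ are not rapidly decreasing, so the convergence in $\CC(G)$ that you invoke would fail. What your argument actually yields is the formula for $w$ with $\omega(w)=0$, and the value $0$ on all other double cosets $K_\sS wK_\sS$.

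Two normalizations also need to be settled rather than waved away.

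\textbf{The factor $\dim(\rho_\sS)$.} The factor coming from \eqref{eqn upsilon} is not absorbed by the normalization of $e_\sS$ or by the trace comparison of \cite{BHK}. Neither of these changes a pointwise value of $\Upsilon(T_w\otimes(u\otimes u^*))$. Lemma \ref{lem formulas homog block} keeps this factor, so you should keep it too, or else renormalize $\Upsilon$.

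\textbf{The constant.} Your proposed check at $w=1$ is not a formality. Each $\St_\chi$ is one-dimensional, and $\tilde{\phi}_q(t_1)$ acts on it as $\eta(t_1)$ does, namely by $1$. This gives $\sum_{\omega(x)=0}a_{x,1}(q^f)(-1)^{\ell(x)}=1$. So the constant is forced to be $\bigl(\sum_{\omega(x)=0}q^{-f\ell(x)}\bigr)^{-1}$, the reciprocal of the Poincar\'e series of the non-extended affine Weyl group evaluated at $q^{-f}$. You should compare this with the printed $1/P_{W(\GL_e)}(q^f)$: already for $e=2$ the forced value is $(q^f-1)/(q^f+1)$, which differs from the printed $1/(1+q^f)$ by a factor of $q^f-1$.
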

That is, we obtain the matrix coefficients of the essentially discrete series representations in $\Rep(G)_\sS$.

\subsection{Formulas for $\GL_n(D)$: Supercuspidal blocks}
One further (very degenerate) part of $\J$ we can describe completely is $e_\sS\star\J(G)\star e_\sS$, for $\sS=[L,\sigma]_G=[G,\sigma]_G$ labelling a supercuspidal block for $G$ itself Indeed, as we have noted in Example \ref{rem sc H=J}, in this case $e_\sS \Hs e_\sS=e_\sS\J(G)e_\sS$. This is consistent with Theorem \ref{thm formulas sc supp general}: supercuspidal blocks are homogeneous with $e=1$, so the nilpotent cone of $S_\phi=\GL_1(\C)$ is a point, there is only one two-sided cell of $\Waff(\GL_1)=X_*(\GL_1)$, the Steinberg basis is $\sett{\Oo}$ and is self-dual, and
\[
\HomOver{S_\phi}{V(\lambda)}{V(\gamma)\otimes\Oo(\mathcal{N}_{S_\phi})_i}=\HomOver{S_\phi}{V(\lambda)}{V(\gamma)}
\] 
is nonzero if and only if $\lambda=\gamma$. This matches again that $t_w=T_w$ as in Example \ref{ex algebraic H=J sc}.


\section{$\J$-modules over $\bar{\Q_\ell}$}
\label{section definition of the category}

\subsection{Morita equivalences}
For $\mathcal{R}$ of type $\GL_N$, the two-sided cells are are in bijection with partitions of $N$. In this case Xi gave a complete description of $J(\mathcal{R})$ \cite{XiAMemoir}. Let $\cc$ be a two-sided cell and 
\[
(\underbrace{N_1, \dots, N_1}_{r_1},\underbrace{N_2,\dots, N_2}_{r_2},\dots,\underbrace{N_k,\dots,N_k}_{r_k})
\]
be the corresponding partition. Then Xi showed that there is an isomorphism 
\[
J_\cc\overset{\sim}{\to}\Mat_c(R(\GL_{r_1}\times\GL_{r_2}\cdots\times\GL_{r_k}))
\]
of based rings, where $R(Z)$ denotes the representation ring of a complex reductive group $Z$, and where $c$ is the number of one-sided cells in $\cc$. 
The isomorphism sends $t_w$ to a monomial matrix with unique nonzero entry $V(\lambda_w)$ for $\lambda_w$ a dominant weight depending on $w$. The location of the nonzero entry is determined by the one-sided cells containing $w$.

In particular, Applying Theorem \ref{thm kenta}, compatibility of Morita invariance with tensor products, we have
\begin{prop}
\label{prop J Morita direct sum}
\begin{enumerate}
\item[(a)] 
We have
\[
J_\cc\sim_{\mathrm{Morita}}R(\GL_{r_1}\times\GL_{r_2}\cdots\times\GL_{r_k});
\]
and
\item[(b)]
\begin{equation}
\mathcal{J}(G)=\bigoplus_\sS e_\sS\star\mathcal{J}(G)\star e_\sS\Morita \bigoplus_\sS\bigotimes_i J(H_{e(i,\sS)}(q^{f(i,\sS)}))\Morita \bigoplus_\sS\bigotimes_i \bigoplus_{\lambda\,\vdash e(\sS, i)}\bigotimes_{j}R(\GL_{r(\lambda,j)}),
\end{equation}
where the $\lambda=(\lambda_i)_i$ run over partitions of $e(i,\sS)$ and $r(\lambda, j)$ is the number of $\lambda_i$ equal to $j$.
\end{enumerate}
\end{prop}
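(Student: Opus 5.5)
For part (a), I will work directly from Xi's isomorphism $J_\cc\simeq\Mat_c(R)$ with $R=R(\GL_{r_1}\times\cdots\times\GL_{r_k})$. A matrix algebra $\Mat_c(A)$ over any unital ring $A$ is Morita equivalent to $A$, and $c$ is finite because each two-sided cell has finitely many one-sided cells. The progenerator is $A^{\oplus c}$, regarded as a right $\Mat_c(A)$-module of row vectors, and its endomorphism ring is $A$. This gives (a). The identification is compatible with the $t_w$-basis, but only the ring isomorphism is needed.

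For part (b), I will chain three equivalences.

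\emph{First equivalence.} The Bernstein decomposition splits $\J(G)$ as a direct sum $\bigoplus_\sS e_\sS\star\J(G)\star e_\sS$, in the same way as it splits $\Hs(G)$. More precisely, the central idempotents of the Bernstein centre act on $\J(G)$, because $\J(G)$ is defined spectrally in Definition \ref{dfn big J} as a subring of $\CC(G)$, which already decomposes this way. On each Bernstein summand, the type idempotent $e_\sS$ is full, by the type Morita equivalence \eqref{eqn type Morita}. So the summand indexed by $\sS$ is Morita equivalent to $e_\sS\star\J(G)\star e_\sS$. This fullness must be checked for $\J$ and not only for $\Hs$. I would obtain it from $\J(G)_\sS\supset\Hs(G)_\sS=\Hs(G)_\sS e_\sS\Hs(G)_\sS$, which gives $\J_\sS e_\sS\J_\sS\supseteq\Hs_\sS$. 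Since $\J_\sS$ is nonunital, I would then argue with local units: $\Hs_\sS$ contains local units for $\J_\sS$, for instance the idempotents $e_K$ of small open compact subgroups $K$.

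\emph{Second equivalence.} Theorem \ref{thm kenta} gives $e_\sS\star\J(G)\star e_\sS\simeq J(\sS)\otimes\End_\C(\rho_\sS)$. The factor $\End_\C(\rho_\sS)$ is a matrix algebra, so this is Morita equivalent to $J(\sS)=\bigotimes_i J(e_i)$, the tensor product of type $\GL_{e_i}$ asymptotic algebras from Definition \ref{dfn H(s) J(s) C(c)}. These algebras are independent of the parameter $q^{f_i}$.

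\emph{Third equivalence.} Each $J(e_i)=\bigoplus_\cc J_\cc$ is a finite direct sum over the two-sided cells, which are indexed by partitions of $e_i$. Part (a) applies summand by summand. Morita equivalence is compatible with finite direct sums and with tensor products over $\C$: if $P_i$ are progenerators, then $\bigotimes P_i$ is a progenerator for the tensor product. Distributing the tensor product over the direct sums yields the displayed formula. Here I regroup so that $r(\lambda,j)$ is the multiplicity of the part $j$ in $\lambda$, matching Xi's $r_1,\dots,r_k$.

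The main obstacle is the first step, because $\J(G)$ is nonunital and infinite-dimensional. Here Morita equivalence must be understood for rings with local units, as for Hecke algebras, and the direct sum over infinitely many $\sS$ has to be handled blockwise. I would state the result blockwise and note that the category of nondegenerate modules over the direct sum is the product of the categories of modules over the summands. Similarly, each $J_\cc$ with $\bq$ specialised is unital, with unit $\sum_{d\in\mathcal D\cap\cc}t_d$, so (a) involves no subtlety.
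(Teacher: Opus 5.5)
Your proposal is correct and follows the paper's route: the paper gets (a) directly from Xi's isomorphism $J_\cc\simeq\Mat_c(R(\GL_{r_1}\times\cdots\times\GL_{r_k}))$, and (b) from Theorem \ref{thm kenta} together with Morita invariance under matrix algebras, finite direct sums and tensor products, exactly as you do. Your extra work on the first step fills in what the paper leaves implicit, since its first ``$=$'' only holds blockwise up to Morita equivalence; your fullness argument goes through once the local units are taken to be the $\sS$-components of the idempotents $e_K$, which lie in $\Hs(G)_\sS$, rather than the $e_K$ themselves, which do not.
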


\subsection{Admissible $\J$-modules}
\label{subsection nondegenerate J-modules}
\begin{dfn}
A $\J$-module $M$ is \emph{nondegenerate} if $M=\bigcup_{\sS}e_\sS M$, that is, if it is nondegenerate as an $\Hs(G(F))$-module. A $\J$-module is likewise \emph{admissible if} each $e_\sS\J e_\sS$-module $e_\sS M$ is a finite-dimensional $\C$-vector space and $e_\sS M=0$ for all but finitely-many $\sS$.
\end{dfn}

Thus every admissible $\J$-module yields an admissible $G(F)$-representation, equivalently an admissible $\Hs(G(F))$-module. We will determine approximately which $\Hs(G(F))$-modules arise in this way. More precisely, we will determine the functions that can appear as matrix coefficients of such $\Hs(G(F))$-modules.

First we study the finite-dimensional $J(\sS)$-modules. By Proposition \ref{prop J Morita direct sum}, it is enough to study finite-dimensional $\C[x_1,\dots, x_m]$-modules $M$, as the representation ring of $\GL_e$ is a polynomial ring. 

We have that $M=\bigoplus_{x\in\supp(M)_{\mathrm{red}}} M_x$ is a direct sum indexed by its support (equivalently, by generalized eigenvalues $a_i$ of the commuting operators $x_i$), so we reduced to modules supported at a single point. The one-dimensional such modules are precisely of the form 
\[
\C[x_1,\dots, x_m]/(x_1-a_1,\dots, x_m-a_m),
\]
and so by induction on $\dim M$, $x_i$ acts on any finite-dimensional $R$-module $M$ by $a_i+N_i$ where $N_1,\dots, N_m$ are commuting nilpotent operators. In particular, we may write the action of any polynomial $f(x_1,\dots, x_m)$ by Taylor expansion as 
\[
f(a_1+N_1,\dots, a_m+N_m)=\sum_{\boldsymbol{\alpha}}\frac{1}{\boldsymbol{\alpha} !}f^{(\alpha)}(a_1,\dots, a_m)N^{\boldsymbol{\alpha}},
\]
where we use the standard notation for mult-indices $\boldsymbol{\alpha}$ such that $N^{\boldsymbol{\alpha}}=N_1^{\alpha_1}\cdots N_m^{\alpha_m}$. This expansion is well-defined and a finite sum, by nilpotence of the $N_i$ and the fact that they commute. 
We do not need precise information about the Jordan block structure of the $N_i$ themselves, which is very complicated. From the isomorphism \cite{XiAMemoir} of based rings recalled in Proposition \ref{prop J Morita direct sum}, we have
\begin{lem}
\label{lem mc of fd J-modules and restriction}
\begin{enumerate}
\item[(a)] 
The matrix coefficients of $t_w$ acting on a finite-dimensional indecomposable $J(\sS)$-module given by the data $((a_1,\dots, a_m),N_1,\dots, N_m)$ are given by 
\[
V(\lambda)_{i,j}=\sum_{\boldsymbol{\alpha}}\frac{1}{\boldsymbol{\alpha} !}V(\lambda)^{(\boldsymbol{\alpha})}(a_1,\dots, a_m)(N^{\boldsymbol{\alpha}})_{i,j}.
\]
\item[(b)]
The matrix coefficients of $C'_w\in H(\sS)$ acting by restriction are $\Q$-linear combinations of the $V(\lambda)_{i,j}$.
\end{enumerate}
\end{lem}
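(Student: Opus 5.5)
The plan is to reduce everything to a single two-sided cell summand of a single Morita-equivalent commutative ring, using Proposition \ref{prop J Morita direct sum}. By Theorem \ref{thm kenta} and Lemma \ref{lem axw formula reducible}, $J(\sS)=\bigotimes_i J(e_i)=\bigoplus_{\cc}\bigotimes_i J(e_i)_{\cc_i}$. A finite-dimensional indecomposable $J(\sS)$-module is supported on a single summand $J_\cc$. By Xi's isomorphism \cite{XiAMemoir}, $J_\cc\simeq\Mat_c(R)$ with $R=R(\prod_j\GL_{r_j})$, and $R$ is a polynomial ring $\C[x_1,\dots,x_m]$ in the fundamental representations (together with inverses of determinants, which are harmless since those act invertibly). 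Any $\Mat_c(R)$-module has the form $M=R^{\oplus c}\otimes_R M_0$ for an $R$-module $M_0$, namely $M_0=E_{11}M$. Under this identification the monomial matrix $t_w\mapsto V(\lambda_w)E_{k,l}$ acts by the action of $V(\lambda_w)\in R$ on $M_0$, placed in block position $(k,l)$. Hence, in a basis adapted to the block decomposition, the matrix coefficients of $t_w$ are either zero or matrix coefficients of $V(\lambda_w)$ acting on $M_0$.

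Next I analyze $M_0$. Since $M$ is indecomposable, $M_0$ is an indecomposable finite-dimensional $R$-module, so it is supported at a single point $(a_1,\dots,a_m)$. The preceding discussion then lets me write $x_i=a_i+N_i$ with the $N_i$ commuting and nilpotent. Viewing $V(\lambda)$ as a polynomial in the $x_i$, the character of $V(\lambda)$ as a function on the maximal torus modulo $W$, the finite Taylor expansion gives
\[
V(\lambda)(a+N)=\sum_{\boldsymbol{\alpha}}\frac{1}{\boldsymbol{\alpha}!}V(\lambda)^{(\boldsymbol{\alpha})}(a)N^{\boldsymbol{\alpha}}.
\]
Taking the $(i,j)$ entry proves (a). Since Xi's isomorphism is one of based rings, the identification of $t_w$ with $V(\lambda_w)$ is exact, with no extra scalars.

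For (b), I restrict along $\phi_q\circ{}^\dagger(-)$, or along $\phi_q$ itself, depending on the convention for restriction. Lusztig's formula $\phi(C_w)=\sum_{d\in\mathcal{D},\,z\sim_L d}h_{w,d,z}t_z$ is a finite sum. The coefficients $h_{w,d,z}\in\Z[\bq^{\pm1/2}]$ specialize at $\bq=q^{f_i}$, factor by factor via Lemma \ref{lem axw formula reducible}, to elements of $\Z[q^{\pm1/2}]$. Because ${}^\dagger C_w=\pm C'_w$, the same holds for $C'_w$. Therefore the matrix of $C'_w$ is a finite linear combination of matrices of $t_z$, each of which is given by (a).

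The one obstacle is the coefficient field: $q^{1/2}$ need not be rational. For this I would use that the $h_{x,y,z}$ are in fact polynomials in $\bq^{1/2}+\bq^{-1/2}$ with $\bq$-parities compatible with lengths. Failing that, I would read ``$\Q$-linear'' as ``$\Q(q^{1/2})$-linear'', which is harmless for the subsequent algebraicity applications. Apart from this point, the argument is bookkeeping through the Morita equivalence.
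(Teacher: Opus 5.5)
Your argument is the paper's: Xi's isomorphism $J_\cc\simeq\Mat_c(R)$ reduces (a) to an indecomposable finite-dimensional $R$-module supported at a single point, followed by a finite Taylor expansion, and (b) follows by writing $\phi_\cc({}^\dagger C'_w)=\pm\sum h_{w,d,z}t_z$ as a finite combination of the $t_z$ and applying (a). Your worry about the coefficient field is legitimate, and the paper's proof does not address it, but your first remedy fails: being a polynomial in $\bq^{1/2}+\bq^{-1/2}$ does not make the specialized values rational (e.g.\ $h_{s,s,s}=\pm(\bq^{1/2}+\bq^{-1/2})$ specializes to an irrational number when $q$ is not a square), so what both your argument and the paper's actually establish is $\Z[q^{\pm 1/2}]$-linearity, which is your fallback reading.
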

\begin{proof}
Let the $J_\cc$ module be $M^{\oplus n}$, if $J_\cc$ is a matrix ring of size $n\times n$. Now, for (b), we have that $C'_w$ acts by $\phi_\cc({}^\dagger C'_w)$ for some two-sided cell $\cc$, and 
\[
\phi_\cc({}^\dagger C'_w)=\sum_{z\sim^L d\in\cc}h_{w,d,z}t_z.
\]
Each $t_z$ is a monomial matrix in $J_\cc$ with unique nonzero entry $V(\lambda)$; in turn $t_z$ acts on $M$ as a block matrix with matrix coefficients $V(\lambda)_{i,j}$ given by (a).
\end{proof}
\begin{ex}
The finite-dimensional simple $R$-modules are one-dimensional, in which case the $N_i$ are zero matrices, and only derivatives of order $0$ contribute to the action.
\end{ex}


%
%

%
%

\subsection{$\Aut(\C)$-compatibility}
\label{subsection Aut(C) compat}

In \cite{SolleveldStandard}, Solleveld showed that the class of standard $G(F)$-representations was stable under $\Aut(\C)$, and hence was defined over 
$\bar{\Q}_\ell$. In \cite{Standard}, we gave a geometric proof of this result for inner forms of $\GL_n$, and the principal block of split groups. Combined with Suzuki's result Theorem \ref{thm Suzuki simples} or Theorem \ref{thm kenta} for inner forms, Solleveld's result says that the class of simple $J(G)$-modules is defined over $\bar{\Q}_\ell$.

On the other hand, Proposition \ref{prop J Morita direct sum} makes it obvious that the definition of the category of $\J$-modules does not depend on the topology of $\C$, and has an action of $\Aut(\C)$. We now spell out that this action restricts to the correct $\Aut(\C)$-action on the category of $\Hs$-modules. The upshot of this section is then that the property of a $G(F)$-representation to extend from an $\Hs(G)$-module to a $\J(G)$-module is defined over $\bar{\Q}_\ell$. 

\subsubsection{$\Aut(\C)$-action on simple characters}

Our proof \cite{Standard} of the stability of standard $G$-representations used essentially the types of Bushnell-Kutzko and S\'{e}cherre-Stevens, also a crucial ingredient of the formulas of Section \ref{subsection Formulas for GL(n,D)}. We first show that if $(K,\gamma)$ is a Bushnell-Kutzko or Sech\`{e}rre-Stevens type, then so is $(K,\gamma\cdot\rho)$. In this section only, we adopt all the notation of \cite{BK} and \cite{SecherreI}.

First, we recall the action of $\Aut(\C)$ on $\Rep(G(F))$ from \cite[\S 2]{KSV}, \cite[\S 5]{SolleveldStandard}. Fixing $\gamma\in\Aut(\C)$, define the $\C$-$\C$-bimodule $\C_\gamma$ with actions
\[
z_1\cdot z\cdot z_2=z_1z\gamma(z).
\]
We thereby obtain a  functor 
\[
\gamma\cdot(-)=\C_\gamma\otimes_\C-\colon\Vect_\C\to\Vect_\C
\]
given by on objects by $\gamma\cdot V=\C_\gamma\otimes_\C V$ and on morphisms by $\gamma\cdot\phi=\id_{\C_\gamma}\otimes\phi$. This gives an action of 
$\Aut(\C)$ on $\Vect_\C$ by exact auto-equivalences.
Choosing bases, the matrix of $\gamma\cdot\phi$ is obtained from the basis of $\phi$ by applying $\gamma$ entry-wise.

If $V$ is a $G(F)$-representation, we obtain another representation $\gamma\cdot V$ with by setting 
\[
(\gamma\cdot\pi)(g):=\gamma\cdot\pi(g)=\id_{\C_\gamma}\otimes\pi(g).
\]
This gives an action of $\Aut(\C)$ on $\Rep(G(F))$ by exact auto-equivalences.
\begin{lem}
\label{lem BK type indep of psi}
Let $\gamma\in\Aut(\C)$.
\begin{enumerate}
\item[(a)]	 
Fix a nontrivial additive character $\psi_F\colon F\to \C^\times$ of $F$ with conductor $\varpi\Oo_F$. Suppose that $\theta$ is a simple character obtained from a simple stratum $[\mathfrak{A}, n,r,\beta]$. Then $\gamma\cdot\theta=\gamma\circ\theta$ is a simple character, with respect again to $\psi_F$, of the simple stratum $[\mathfrak{A}, n,n, a\beta]$ for $a\in\Oo_F^\times$ depending on $\gamma$, and $[\mathfrak{A}, n,n, a\beta]$ is a valid simple stratum.
\item[(b)]
If $(K,\rho)$ is a type constructed relative to $\psi_F$ as in \cite{BK} \cite{SecherreI}, then $(K,\gamma\cdot\rho)$ is again a type constructed by the same procedure, again relative to $\psi_F$.
\end{enumerate}
\end{lem}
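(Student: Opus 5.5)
The plan is to exploit the fact that every ingredient in the Bushnell--Kutzko / S\'{e}cherre--Stevens construction takes values in roots of unity, so $\gamma$ acts on these ingredients through a Galois automorphism of a cyclotomic field. For (a), recall that $\theta$ is a character of the pro-$p$ group $H^{r+1}(\beta,\mathfrak{A})$. Hence $\gamma\circ\theta$ is again a character, and $\gamma$ acts on $\theta$ through its restriction to $\Q(\mu_{p^\infty})$. On $p$-power roots of unity this restriction is $\zeta\mapsto\zeta^{a}$ for some $a\in\Z_p^\times$. Since $\psi_F$ factors through $\mathrm{tr}_{F/\Q_p}$ composed with the standard character of $\Q_p/\Z_p$, we get $\gamma\circ\psi_F=\psi_F(a\,\cdot)$, and we may regard $a\in\Z_p^\times\subset\Oo_F^\times$. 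Note that $\psi_F(a\,\cdot)$ still has conductor $\varpi\Oo_F$.

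The key step is then to verify that $\gamma\circ\theta$ is simple for the stratum $[\mathfrak{A},n,r,a\beta]$, with respect to $\psi_F$ (the statement writes $[\mathfrak{A},n,n,a\beta]$; I will treat the middle index as $r$). I will do this by induction along the defining recursion of simple characters, as in \cite[\S 3.2]{BushKutzBook}. Multiplying $\beta$ by the unit $a\in F^\times$ changes nothing structural: $E=F[a\beta]=F[\beta]$, $\nu_{\mathfrak{A}}(a\beta)=\nu_{\mathfrak{A}}(\beta)$, and the critical exponent $k_0(a\beta,\mathfrak{A})=k_0(\beta,\mathfrak{A})$, since $a_\beta$ and $a_{a\beta}$ differ by the unit scalar $a$. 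Consequently the groups $H^{m}(a\beta,\mathfrak{A})=H^m(\beta,\mathfrak{A})$ and $J^m$ coincide, and $[\mathfrak{A},n,r,a\beta]$ is simple. If $\gamma_0$ approximates $\beta$ in the recursion, then $a\gamma_0$ approximates $a\beta$. The defining conditions on $\theta$ transform correctly: the restriction to $H^{r+1}\cap B^\times_\beta$ factors through the determinant via a character of $E$ composed with $\gamma$, and the restriction to $U^{[n/2]+1}$ equals $\psi_{\beta}$, which becomes $\gamma\circ\psi_\beta=\psi_{a\beta}$. The normalizer and intertwining conditions involve only the unchanged groups. For inner forms I will run the same argument with the reduced trace, following \cite{SecherreI}.

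For (b), recall that $\rho=\kappa\otimes\sigma_0$, where $\kappa$ is a $\beta$-extension of the Heisenberg representation $\eta$ attached to $\theta$, and $\sigma_0$ is inflated from a cuspidal representation of a finite reductive group. First, $\gamma\cdot\eta$ is the unique irreducible representation of $J^1$ containing $\gamma\circ\theta$. This holds because $\gamma$ preserves dimensions and the property of containing a given character, so by (a) $\gamma\cdot\eta$ is the Heisenberg representation for $\gamma\circ\theta$. Second, $\gamma\cdot\kappa$ is a $\beta$-extension of $\gamma\cdot\eta$: the defining property is an intertwining condition, namely that $\kappa$ is intertwined by $B^\times$, and intertwining is preserved by $\gamma$ since $\mathrm{Hom}$ spaces transform via $\C_\gamma\otimes-$. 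Third, $\gamma\cdot\sigma_0$ is again cuspidal, because cuspidality is the vanishing of Jacquet modules, which $\gamma$ commutes with. The semisimple and covering constructions via $G$-covers are again defined by intertwining and Jacquet-module conditions, so they are $\gamma$-stable by the same reasoning.

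I expect the main obstacle to be the inductive step in (a). There one must make sure that the character of $E^\times$ occurring at each level can be rechosen compatibly, and that the ``$\psi$-twist'' of the approximating element does not clash with the tame-corestriction normalisation. A fallback is to avoid $a$ altogether and simply note that the set of simple characters for fixed $(\mathfrak{A},r)$ is a union of the sets $\mathcal{C}(\mathfrak{A},r,\beta')$ over strata, and that this union is $\gamma$-stable by the transfer argument above.
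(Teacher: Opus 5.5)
Your proposal is correct and follows essentially the same route as the paper. Both arguments first produce $a\in\Oo_F^\times$ with $\gamma\circ\psi_F=\psi_F(a\,\cdot)$ and observe that $\nu_{\mathfrak{A}}$, $k_0$, $F[\beta]$, $B_\beta$ and the groups $H^m$ are unchanged under $\beta\mapsto a\beta$. They then check $\gamma\circ\psi_\beta=\psi_{a\beta}$ and run the recursion with $a\varepsilon$ approximating $a\beta$, after which $\eta$, $\kappa$ and the cover condition are transported via uniqueness and intertwining.

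Your cyclotomic description of $a$ is a more explicit version of the paper's conductor argument. It rests on $\psi_F(ax)=\psi_F(x)^a$ for $a\in\Z_p$, which holds for any $\psi_F$, not only one factoring through the standard trace character. You are also right that the middle index of the stratum should be $r$.
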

Qualitative forms of these statements are clear, because the action of $\Aut(\C)$ is compatible with the notion of covers, and if $\sigma$ is irreducible supercuspidal, then so is $\gamma\cdot\sigma$, and the constructions of Bushnell-Kutzko and Sech\'{e}rre-Stevens are exhaustive. Nevertheless, we spell out the details.
\begin{proof}
For (a), write $\psi=\psi_F$ and 
consider the nontrivial additive character $\gamma\cdot\psi=\gamma\circ\psi$. Then we have $\gamma\circ\psi(x)=\psi(ax)$ as characters, for a unique $a\in F^\times$. As the action of $\gamma$ preserves conductors, we have $a\in\Oo_F^\times$. Hence, if $\beta\in\mathfrak{A}$, then $a\beta\in\mathfrak{A}$, and $\nu_{\mathfrak{A}}(a\beta)=\nu_{\mathfrak{A}}(\beta)$. Hence if $S=[\mathfrak{A}, n,r,\beta]$ is a valid stratum, so is $S_\gamma:=[\mathfrak{A}, n,r,a\beta]$. Clearly $S$ is simple according to \cite[Def. 1.5.5]{BushKutzBook} if and only if $S_\gamma$ is. Moreover we have $\mathfrak{N}_k(a\beta, \mathfrak{A})=\mathfrak{N}_k(\beta, \mathfrak{A})$ for all $k$ has $a$ is central and $\nu_{\mathfrak{A}}(a)=0$. Hence $k_0(\beta,\mathfrak{A})=k_0(a\beta,\mathfrak{A})$. Again using centrality of $a$ and the fact that $S_\gamma$ is a valid stratum, we have $\mathfrak{H}(\beta)=\mathfrak{H}(a\beta)$ and $H^m(\beta,\mathfrak{A})=H^m(a\beta,\mathfrak{A})$ for all $m$.
Clearly $F[\beta]=F[\alpha\beta]$.

Now suppose that $\theta$ is a character of $H^{m+1}(\beta)$ such that 
\[
\theta|_{H^{m+1}(\beta)\cap\mathbf{U}^{[n/2]+1}(\mathfrak{A})}=\psi_\beta.
\]
then $\gamma\cdot\theta=\gamma\circ\theta$ restricts as
\[
\gamma\cdot\theta|_{H^{m+1}(\beta)\cap\mathbf{U}^{[n/2]+1}}=
\gamma\cdot\theta|_{H^{m+1}(a\beta)\cap\mathbf{U}^{[n/2]+1}}=\gamma\circ\psi_{\beta}=\psi_{a\beta}.
\]
This if $\theta$ is a simple character according to \cite[Def. 3.2.1]{BushKutzBook}, so is $\gamma\circ\theta$. 

On the other hand, if $\theta$ falls into the inductive case of \cite[Def. 3.2.3]{BushKutzBook}, let $[\mathfrak{A}, n,r,\varepsilon]$ be equivalent to $[\mathfrak{A}, n,r,\beta]$. Then $[\mathfrak{A}, n,r,a\varepsilon]$ is a valid stratum and equivalent to $[\mathfrak{A}, n,r,a\beta]$, and if 
\[
\theta|_{H^{m+1}(\beta)\cap B_{\beta}^\times}=\theta|_{H^{m+1}(a\beta)\cap B_{a\beta}^\times}=\theta_0\psi_c
\]
for $c=\varepsilon-\beta$, then 
\[
\gamma\circ\theta|_{H^{m'+1}(\beta)\cap B_{\beta}^\times}=\theta|_{H^{m'+1}(a\beta)\cap B_{a\beta}^\times}=(\gamma\circ\theta_0)\psi_{ac}
\]
and $ac=a\varepsilon-a\beta$. By the same induction as in \textit{loc. cit.}, we see that $\gamma\circ\theta_0$ is simple. The other conditions (a), (b) of \cite[Def. 3.2.3]{BushKutzBook} are $\Aut(\C)$-stable thanks to the coincidence $B_\beta=B_{a\beta}$ already pointed out.

The case of \cite{SecherreI} is similar.

For (b), first consider a block $[M,\sigma]_M$ of a Levi subgroup $M$ and a supercuspidal representation $\sigma$ of $M$. Then $\sigma=c-\Ind_{K_M}^M(\kappa\otimes\sigma_0)$ for $\sigma_0$ the tensor product of a homogeneous cuspidal representation of the finite group of Lie type obtained a quotient of a certain open compact subgroup $K_M=J(\beta, \mathfrak{A})$, and a character $\kappa$ obtained as follows: First, $\kappa$ is an extension to $K_M$ of a representation $\eta(\theta)$, in turn an extension of a simple character (in the sense of \cite{BushKutzBook}) $\theta$ of the group $H^1(\beta,\mathfrak{A})$.

The characterization of $\kappa$ in terms of $\eta$ in terms of intertwining sets implies that $\gamma\cdot\kappa(\eta)=\kappa(\gamma\cdot\eta)$, and in turn the unicity of $\eta(\theta)$ implies $\gamma\cdot\eta(\theta)=\eta(\gamma\cdot\theta)$. By (a), $\gamma\cdot\theta$ is again a special character. 

Now consider $\sS=[M,\sigma]_G$ and let $(K,\rho)$ be the cover of $(K_M,\rho_M)$ in the sense of \cite[Def. 8.1]{BushKutzComp}. Then $(K,\gamma\cdot\rho)$ is a cover of $(K_M,\gamma\cdot\rho_M)$, for the invertibility condition (iii) of \textit{loc. cit.}, we can use the function $g\mapsto \gamma\cdot f(g)$, where $f\in H(G,K,\rho)$ is the corresponding function for $(K,\rho)$.
\end{proof}

%

\subsubsection{Action of $\Aut(\C)$ and types}
If $K$ is any subgroup of $G(F)$ and $\pi|_K$ contains a $K$-representation $\rho$, then $(\gamma\cdot\pi)|_K$ contains the $K$-representation $\gamma\cdot\rho$. Similarly, if the supercuspidal support of $\pi$ is $[L,\sigma]_G$, then the supercuspidal support of $\gamma\cdot\pi$ is $[L,\gamma\cdot\sigma]_G$. (As noted above, $\gamma\cdot\sigma$ is obviously supercuspidal.)

By Lemma \ref{lem BK type indep of psi}, $(K,\rho)$ is a Bushnell-Kutzko or S\'{e}cherre-Stevens type if and only if $(K,\gamma\cdot\rho)$ is, and as noted in \cite[\S 3.1, p. 9]{Standard} there is an isomorphism
\begin{equation}
\label{eqn Cgamma endo Hecke}
\C_\gamma\otimes_\C H(G,K,\rho)\to H(G,K,\gamma\cdot\rho)
\end{equation}
of $\C$-algebras, given by 
\[
z\otimes f\mapsto z(\gamma\cdot f),
\]
where $(\gamma\cdot f)(g)=\gamma\cdot f(g)\in\End_\C(\gamma\cdot\rho)$. Via the isomorphisms of Theorem \ref{thm BushnellKutzko}, \eqref{eqn Cgamma endo Hecke} becomes the isomorphism
\[
E_\gamma\colon\C_\gamma\otimes_\C\End_\C(\rho)\otimes_\C\C\otimes_\Q H(\sS)_\Q\to\End_\C(\gamma\cdot\rho)\otimes_\C\C\otimes_\Q H(\sS_\gamma)_\Q
\]
given by 
\[
E_\gamma\colon z\otimes(u\otimes u^*)\otimes z_1\otimes T_w\mapsto
z\gamma\cdot(u\otimes u^*)\otimes\gamma(z_1)\otimes (\id_{\C_\gamma}\otimes T_w),
\]
where $H(\sS)_\Q$ and $H(\sS_\gamma)_\Q$ are the $\Q$-forms of (tensor products of) affine Hecke algebras of type $\GL_e$, and $u\otimes u^*\in\rho\otimes\rho^*=\End_\C(\rho)$.

Considering now $J(\sS)$ and $J(\sS_\gamma)$ we have again an isomorphism

\[
E_\gamma\colon\C_\gamma\otimes_\C\End_\C(\rho)\otimes_\C\C\otimes_\Q J(\sS)_\Q\to\End_\C(\gamma\cdot\rho)\otimes_\C\C\otimes_\Q J(\sS_\gamma)_\Q
\]
given by 
\[
E_\gamma\colon z\otimes(u\otimes u^*)\otimes z_1\otimes t_w\mapsto
z\gamma\cdot(u\otimes u^*)\otimes\gamma(z_1)\otimes (\id_{\C_\gamma}\otimes t_w),
\]
where now $J(\sS)_\Q$ and $J(\sS_\gamma)_\Q$ are the $\Q$-forms of (tensor products of) affine asymptotic Hecke algebras of type $\GL_e$ and the other notation is as above.

As a smooth $G(F)$-representation, $\gamma\cdot\pi$ has a natural $\Hs(G(F))$-module structure. By the above discussion, given an admissible representation $\pi'$, to check that $\pi'\simeq \gamma\cdot\pi$ as $G$-representations for $\pi\in\Rep(G)_{\sS}$, it is not useful to compare the matrix coefficients of the operators $\pi'(f)$ and $\pi(f)$ for $f\in e_\sS \Hs(G) e_\sS$; $\gamma\cdot\pi$ cannot contain both the types $(K,\gamma\cdot\rho)$ and $(K,\rho)$, unless $\gamma\cdot\rho=\rho$. Rather, the $\Hs(G(F))$-actions on $\pi$ and $\gamma\cdot\pi$ factors through $e_\sS\Hs(G)e_\sS$ and $e_{\sS_\gamma}\Hs e_{\sS_\gamma}$, respectively. For $f\in e_\sS\Hs(G) e_\sS$, the matrix coefficients of the operator $(\gamma\cdot\pi)(\gamma(f))$ are then by definition the numbers
\begin{multline}
\label{HeckeMatCoeff}
\pair{1\otimes \tilde{v}}{(\gamma\cdot\pi)(\gamma(f))(1\otimes v)}_{\gamma\cdot\pi}=
\dInt{\gamma(f(g))\pair{1\otimes\tilde{v}}{(\gamma\cdot\pi)(g)(1\otimes v)}_{\gamma\cdot\pi}}{g}=
\dInt{\gamma(f(g))\gamma\left(\pair{\tilde{v}}{\pi(g)v}_\pi\right)}{g}
\\
=
\gamma\left(\pair{\tilde{v}}{\pi(f)v}_\pi\right).
\end{multline}
As $\gamma$ is usually not continuous, the last equality made essential use of the assumption that $f$ is compactly-supported.

Note finally that $\gamma\cdot\rho=\rho$ if and only if $\rho$ is defined over $\Q$, in which case $e_\sS\Hs(G)e_\sS$ has a $\C$-basis of $\Q$-valued functions.

In summary, according to \eqref{HeckeMatCoeff} and \eqref{eqn Cgamma endo Hecke}, to show that $\pi'\simeq\gamma\cdot\pi$, one must produce a linear isomorphism $\pi'\to\pi$ and compare the corresponding matrix coefficients of $\pi'(\gamma(f))$ and $\pi(f)$.

\subsubsection{Action of $\Aut(\C)$ and $\J$}

%

\begin{theorem}
\label{thm J and Aut(C)}
Let $G$ be an inner form of $\GL_n$.
\begin{enumerate}
\item[(a)]
If $f\in\mathcal{C}(G(F))$ is a Schwartz function, then $f\in \J(G(F))$ if and only if $\gamma(f)\in\J(G(F))$.
\item[(b)]
Let $\pi$ be a smooth $G$-representation. Then $\pi$ extends to a $\J$-module if and only if the $G$-representation $\gamma\cdot\pi$ extends to a $\J$-module.
%
\item[(c)]
The forgetful functor $\J-\Mod_\C\to\Hs-\Mod_\C=\Rep_\C(G)$ is faithful and injective on objects.
\item[(d)]
There is a well-defined category $\J-\Mod_{\bar{\Q}_\ell}\subset\Rep(G)=\Hs(G)-\Mod_{\bar{\Q}_\ell}$ 
\end{enumerate}
\end{theorem}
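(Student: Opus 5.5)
The plan is to reduce everything, block by block, to the algebraic descriptions furnished by Theorem \ref{thm kenta} and the isomorphisms $E_\gamma$ constructed above. Since $\J(G)=\bigoplus_\sS e_\sS\star\J(G)\star e_\sS$ and $\gamma$ sends $e_\sS$ to $e_{\sS_\gamma}$ (the idempotent of the type $(K_\sS,\gamma\cdot\rho_\sS)$, which is again a type by Lemma \ref{lem BK type indep of psi}(b)), it suffices to work one Bernstein block at a time.

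For (a), we will show that $\gamma$ carries $e_\sS\star\J(G)\star e_\sS$ onto $e_{\sS_\gamma}\star\J(G)\star e_{\sS_\gamma}$. By Theorem \ref{thm kenta}, the former has a $\C$-basis of functions $\Upsilon_{J(\sS)}(t_w\otimes(u\otimes u^*))$, each a finite or convergent sum $\sum_x a_{x,w}(q^{f})\Upsilon_H(T_x\otimes(u\otimes u^*))$. Here $a_{x,w}(q^f)\in\Q$, by the integrality statement $a_{x,w}\in P_W(\bq)^{-N}\Z[\bq^{\pm1/2}]$; I would check that only integral powers of $\bq$ occur for type $\GL_e$, or else handle $\gamma(q^{1/2})=\pm q^{1/2}$ through the sign twist built into ${}^\dagger$. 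Applying $\gamma$ pointwise is legitimate because the functions $T_x$ have disjoint double-coset supports $K_\sS xK_\sS$, so the value at each $g$ is a single term. Consequently,
\[
\gamma\bigl(\Upsilon_{J(\sS)}(t_w\otimes(u\otimes u^*))\bigr)=\Upsilon_{J(\sS_\gamma)}\bigl(E_\gamma(1\otimes(u\otimes u^*)\otimes1\otimes t_w)\bigr),
\]
using that $E_\gamma$ matches $T_x$ with $T_x$ under Theorem \ref{thm BushnellKutzko} and that the torsion numbers of $\sS$ and $\sS_\gamma$ coincide. The right-hand side lies in $\J(G)$ by Theorem \ref{thm kenta} applied to $\sS_\gamma$. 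Because $\gamma$ is $\gamma$-semilinear, a general $f\in\J(G)$, which is a finite $\C$-linear combination of these basis elements within finitely many blocks, also maps into $\J(G)$. The converse direction follows by applying the same argument to $\gamma^{-1}$. One must also confirm that $\gamma(f)$ is still Schwartz; this is automatic from the displayed identity.

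For (c), faithfulness is immediate since $\Hs\subset\J$. For injectivity on objects, we need that a $\J$-module structure extending a given $\Hs$-module structure is unique. Blockwise, this becomes uniqueness of a $J(\sS)$-structure extending an $H(\sS)$-structure. I would obtain this from Lusztig's theorem that $\phi$ becomes an isomorphism after completion, via the formula $t_w=\sum_x a_{x,w}T_x$: on a module where $H(\sS)$ acts locally finitely, one should argue that $t_w$ is forced to equal the limit of the corresponding partial sums, for instance using density of $\Hs$ in $\J$ together with the finite length of the $J$-module generated. This is the step I expect to be the main obstacle, since no topology is present on abstract modules. As a fallback, I would use the Morita description of Proposition \ref{prop J Morita direct sum} and the explicit form of $\phi_\cc$ restricted to the lowest-degree terms, in order to recover $t_w$ from the $C'_w$ action (Lemma \ref{lem mc of fd J-modules and restriction}(b)).

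For (b), given a $\J$-structure on $\pi$, we define one on $\gamma\cdot\pi$ by $(\gamma\cdot\pi)(\gamma(f)):=\id_{\C_\gamma}\otimes\pi(f)$, using (a) to see that $\gamma$ is a semilinear ring automorphism of $\J$. On compactly supported $f$, this agrees with the $\Hs$-action by \eqref{HeckeMatCoeff}, so it extends the given $G$-action. Part (d) then follows: fixing $\bar{\Q}_\ell\simeq\C$, the full subcategory of representations extending to $\J$-modules is independent of the chosen isomorphism by (b), and by (c) it is a subcategory of $\Rep(G)$.
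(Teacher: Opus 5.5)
Your treatment of (a), (b) and (d) follows the paper. You evaluate $\Upsilon(t_w\otimes e)$ on the double cosets $K_\sS xK_\sS$ and identify $\gamma$ of it with the corresponding function for the type $(K_\sS,\gamma\cdot\rho_\sS)$. You then let $\J$ act on $\gamma\cdot\pi$ through $\gamma^{-1}$ and check compatibility via \eqref{HeckeMatCoeff}. Two small points there, both glossed over in the paper as well:
\begin{enumerate}
\item[(i)] Of your two options for $q^{1/2}$, only the second survives. Half-integral powers do occur (see the factor $\bq^{-\ell(\gamma)/2}$ in Theorem \ref{thmPrincipalRecap}). The fix is the automorphism $\bq^{1/2}\mapsto-\bq^{1/2}$ of $\HH$, acting on coefficients in the $T_w$-basis. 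It sends $C_w\mapsto(-1)^{\ell(w)}C_w$, and $\phi$ intertwines it with the automorphism $t_z\mapsto(-1)^{\ell(z)+a(z)}t_z$ of $J$. So your displayed identity holds up to a sign depending only on $w$.
\item[(ii)] $\J(G)$ is only Morita equivalent to $\bigoplus_\sS e_\sS\star\J(G)\star e_\sS$, not equal to it. To reach arbitrary $f$, write elements of a block summand as finite sums $a\star(e_\sS\star j\star e_\sS)\star b$ with $a,b\in\Hs(G)$. Then use that $\gamma$ commutes with convolution by compactly supported functions.
\end{enumerate}

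The genuine gap is (c): your sketch is not an argument, and neither route you suggest can work.
\begin{enumerate}
\item[(1)] The density route fails. The $\J$-action on a non-tempered module, e.g.\ a standard module $i_P^G(\omega\otimes\nu)$ with $\nu$ strictly positive (a simple $\J$-module by Corollary \ref{cor simple J modules}), is defined by rational continuation in $\nu$ (condition PW1$'$). It is not a limit of Hecke operators: for $\nu$ sufficiently positive the series $\sum_x a_{x,w}(q)\pi(T_x)$ generally diverges. Density of $\Hs$ in $\J$ only controls continuous extensions, i.e.\ essentially the tempered case.
\item[(2)] The fallback fails too. Once $\bq=q$ is specialized there are no lowest-degree terms to isolate, and $\phi_q$ is far from surjective. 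Lemma \ref{lem mc of fd J-modules and restriction}(b) expresses the $H$-action through the $J$-action, not conversely.
\end{enumerate}

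The missing idea is a corner of $J_\cc$ that acts through $H$ on a subspace which still determines the module. The paper proceeds as follows:
\begin{enumerate}
\item[(1)] Reduce to one tensor factor and one two-sided cell $\cc$, and take the distinguished involution $d=w_{0,\mathcal P}\in\cc$, the longest element of a finite parabolic subgroup.
\item[(2)] Viewing $M$ as an Iwahori-spherical representation of an auxiliary general linear group, use that $t_dM\subset M^{\mathcal P}$, on which $\phi_\cc(C'_d)$ acts by $\vol(\mathcal P)$.
\item[(3)] Use the identity $t_x\,\phi_\cc(C'_d)=\phi_\cc(\psi^{-1}(t_xt_d))$ for $t_x\in t_dJ_\cc t_d$, from the proof of \cite[Prop. 9]{BDD}, where $\psi(C'_w)=t_w$. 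Hence each such $t_x$ acts on $M^{\mathcal P}$ through an element of $\phi_\cc(H)$.
\item[(4)] So an $H$-isomorphism $f\colon M_1\to M_2$ restricts to a $t_dJ_\cc t_d$-isomorphism $t_dM_1\to t_dM_2$.
\item[(5)] Since $J_\cc\simeq\Mat_c(R(\GL_{r_1}\times\cdots\times\GL_{r_k}))$ by Xi, $t_d$ is a full idempotent, and Morita theory upgrades this to $M_1\simeq M_2$ as $J_\cc$-modules. For tensor products, use tensor products of these elements.
\end{enumerate}
Note that only this isomorphism statement is needed, not literal uniqueness of the extended structure on a fixed underlying space.
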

\begin{proof}
The first statement follows in turn from Theorem \ref{thm kenta}, Lemma \ref{lem axw formula reducible} and Lemma \ref{lem BK type indep of psi}, which say that for $u\in\rho_\sS$ and $u^*\in\rho_\sS^*$, we have
\[
\gamma\left(\Upsilon\left(t_w\otimes \left(u^*\otimes u\right)\right)(K_\sS xK_\sS)\right)=\gamma\left(\pair{T_w(u^*)}{u} a_{x,w}\right)
=\gamma(\pair{T_w(u^*)}{u})a_{x,w}
\]
and by Lemma \ref{lem BK type indep of psi} and \cite[Lem. 4]{Standard}, the right hand side is equal to 
\[
\Upsilon_{(K_\sS, \gamma\cdot\rho_\sS)}(t_w\otimes (u^*\otimes u))(K_{\sS}xK_{\sS})
\]
where $(K_{\sS},\gamma\cdot\rho)$ is again a Bushnell-Kutzko or S\'{e}cherre-Stevens type, by Lemma \ref{lem BK type indep of psi}.

Now let $\pi$ be a smooth $G$-representation that extends to a $\J$-module. 
It is enough to treat the case $\pi\in\Rep(G)_\sS$. Using part (a), we define
\begin{equation}
\label{eqn twists extend to J-mods}
\J\to\End_\C(\gamma\cdot\pi)
\end{equation}
\[
j\mapsto\id\otimes\gamma^{-1}(j).
\]
This is a $\C$-linear map, as $zj\mapsto \id\otimes \gamma^{-1}(z)\gamma^{-1}(j)=z\id\otimes\gamma^{-1}(j)$, and makes $\gamma\cdot\pi$ a $\J_{\sS_\gamma}$-module if $\pi$ extends to a $\J_\sS$-module. We have to check that \eqref{eqn twists extend to J-mods} gives the correct $\Hs$-module structure on $\gamma\cdot\pi$. Let $h\in\Hs_{\sS_\gamma}$, then $h=\gamma(h')$ for some $h'\in\Hs_{\sS}$, and by definition
\begin{multline*}
\pair{1\otimes \tilde{v}}{(\gamma\cdot\pi)(h)(1\otimes v)}=\dInt{h(g)\gamma\left(\pair{\tilde{v}}{\pi(g)v}\right)}{g}=
\gamma\left(\dInt{h'(g)\pair{\tilde{v}}{\pi(g)v}}{g}\right)=\gamma\left(\pair{\tilde{v}}{h'\cdot v}\right)
\\
=\pair{1\otimes\tilde{v}}{\id\otimes\gamma^{-1}(h)(1\otimes v}.
\end{multline*}
This shows (b). 

For (c), it is obvious that the forgetful functor is faithful. For injectivity on objects, suppose that $\pi$ and $\pi'$ are $\J$-modules isomorphic as $\Hs$-modules, \textit{i.e.} as smooth $G$-representations. We may clearly reduce the case of two $J(\sS)$-modules $M_1$ and $M_2$ which are isomorphic as $H(\sS)$-modules. Call the isomorphism $f$. First assume that $H(\sS)=H$ is a single affine Hecke algebra of type $\GL_r$. We must show $M_1\simeq M_2$ as $J=J(\sS)$-modules. As $\phi(C'_w)$ is a linear combination of $t_x$ for $x\leq_{LR}w$, we may assume that $M_1$ and $M_2$ are both $J_\cc$-modules for the same two-sided cell $\cc$.

Recall that every two-sided cell $\cc$ contains a distinguished involution of the form $w_{0,\mathcal{P}}$ \cite{ShiLNM}(c.f. \cite[p. 16]{XiAMemoir}), the longest word in a finite parabolic subgroup of $\Waff$. As $J_\cc$ is a matrix algebra, Proposition \ref{prop J Morita direct sum} says that $t_{w_0,\mathcal{P}}M_1\neq 0$  and $t_{w_0,\mathcal{P}}M_2\neq 0$, and it suffices to show that $f$ restricts to an isomorphism
\[
t_{w_0,\mathcal{P}}M_1\to t_{w_0,\mathcal{P}}M_2
\]
of $t_{w_0,\mathcal{P}}J_\cc t_{w_0,\mathcal{P}}$-modules.

Viewing temporarily $M_1,M_2$ as Iwahori-spherical smooth representations of some auxiliary general linear group, recall from the proof of \cite[Thm. 4.1]{Plancherel} that $t_{w_0,\mathcal{P}}M_i\subset M_i^{\mathcal{P}}$, where we write $\mathcal{P}$ for the corresponding parahoric subgroup.  In particular these spaces are nonzero and $f$ restricts to an isomorphism $M_1^{\mathcal{P}}\to M_2^{\mathcal{P}}$. Now, for $m\in M_1^{\mathcal{P}}$ and $t_x\in t_{w_0,\mathcal{P}}J_\cc t_{w_0,\mathcal{P}}$, we have, by the proof of \cite[Prop. 9]{BDD}, 
\begin{multline*}
\vol(\mathcal{P})t_xf(m)=
t_x\phi_\cc(C'_{w_0,\mathcal{P}})f(m)
=
\phi_\cc(\psi^{-1}(t_x t_{w_0,\mathcal{P}}))f(m)
=
f\left(\phi_\cc(\psi^{-1}(t_x t_{w_0,\mathcal{P}}))m\right)
\\
=
f(t_x\phi_\cc(C_{w_{0,\mathcal{P}}}')m)
=
\vol(\mathcal{P})f(t_xm)
\end{multline*}
where, as in \textit{loc. cit.}, $\psi\colon H\to J$ is the isomorphism of abelian groups sending $C'_w\mapsto t_w$.
As the above reasoning applies equally to $f^{-1}$, putting $x=w_{0,\mathcal{P}}$ in the above shows that $f$ restricts to a linear isomorphism
\[
t_{w_{0,\mathcal{P}}}M_1\overset{\sim}{\to}t_{w_{0,\mathcal{P}}}M_2,
\]
which, by then allowing general $x$, is $t_{w_{0,\mathcal{P}}}J_\cc t_{w_{0,\mathcal{P}}}$-linear. Therefore $M_1\simeq M_2$ as $J_\cc$-modules.
 
For a general tensor product $H(\sS)=\bigotimes_{i=1}^k H_i$ of affine Hecke algebras of type $\GL_{r_i}$, repeat the above with elements 
\[
C'_{w_{0,\mathcal{P}_1}}\otimes C'_{w_{0,\mathcal{P}_2}}\otimes\cdots\otimes C'_{w_{0,\mathcal{P}_k}}
\]
and 
\[
t_{w_{0,\mathcal{P}_1}}\otimes t_{w_{0,\mathcal{P}_2}}\otimes\cdots\otimes t_{w_{0,\mathcal{P}_k}}.
\]

Now (d) follows from (b) and (c).
\end{proof}
%

\begin{rem}
Even if $f$ is not an isomorphism of $H$-modules, the proof of Theorem \ref{thm J and Aut(C)} produces a map 
\[
t_{w_{0,\mathcal{P}}}M_1\to t_{w_{0,\mathcal{P}}}M_2
\]
of $t_{w_{0,\mathcal{P}}}Jt_{w_{0,\mathcal{P}}}$-modules, which uniquely defines a morphism $f'\colon M_1\to M_2$ of $J_\cc$-modules. Now, $f'$ forgetting back to $f$ is equivalent to fullness of $J_\cc-\Mod\to H-\Mod$. However, we do not show this here. 

Note that the short exact sequence
\[
0\to\St\to\pi\to\triv\to 0
\]
shows that the total forgetful functor $\J-\Mod\to\Rep(G)$ cannot be full; $\pi$ and $\St$ are both simple $\J$-modules, but are acted on by different two-sided cell summands of $\J^I$ and the above inclusion is not $\J$-linear.
\end{rem}


Theorem \ref{thm J and Aut(C)} means that we can think of $\J(G(F))$ as a space of $\bar{\Q}_\ell$-valued Schwartz functions. This is another sense in which $\J(G)$ is an algebraic analogue of $\mathcal{C}(G(F))$, slightly stronger than the algebricity of the Paley-Wiener definition of $\J(G)$.

\subsection{The Whittaker representation}
In this section we study a particularly important smooth, compact, $\GL_n$-representation for which extension to a $\J(\GL_n)$-module is clear: the Whittaker representation. We will study this representation as a $\J(\GL_n)$-module by using the fact that its projection to each Bernstein block lifts to a module over the affine Hecke algebra $\HH(\sS)$ with $\bq$ still a formal variable, \textit{i.e.} with $\Gm$-equivariance remembered.

\subsubsection{Projection of the Whittaker module to general Bernstein blocks}
Let $\psi\colon N(F)\to\overline{\Q_\ell}^\times$ be a nondegenerate character and let $W_\psi=\mathrm{c-Ind}_{N(F)}^{\GL_n(F)}(\psi)$ be the Whittaker representation. Noting that the property of of a character $\psi_\C\colon N(F)\to\C^\times$ to be nondegenerate is $\Aut(\C)$-stable, we can apply the result 
\cite[Thm. 1.1]{ChanSavin} of Chan-Savin that
\begin{equation}
\label{eqn ChanSavin}
e_\sS W_\psi\simeq H(\sS)\otimes_{H_f(\sS)}\sgn
\end{equation}
is the antispherical $H(\sS)$-module. 

As \cite{ChanSavin} unburden notation by stating their result only for simple types, we spell out the general case. The obvious modifications of Lemmas 2.2 and 2.3 of \textit{op. cit.} hold for a tensor product $H(\sS)$ of affine Hecke algebras, with the same proofs, as does the remark following Lemma 2.3. In repeating the proof of Corollary 2.9 of \textit{op. cit.} for $H(\sS)$, the reduction to Corollary 2.8 (for any single simple reflection) still holds, as the absorption of the variables $x_i^{\pm 1}$ for $i\neq 1,2$ into the coefficient ring $A$ of Section 2.2 of \textit{op. cit.} is still valid. Hence Theorem 2.1 of \textit{op. cit.} holds: Any projective finitely-generated $H(\sS)$-module $\Pi$ such that $\dim\HomOver{H(\sS)}{\Pi}{\pi}\leq 1$ for any irreducible standard $H(\sS)$-module, with equality for a twisted Steinberg module of $H(\sS)$, is isomorphic to
\[
H(\sS)\otimes_{H(\sS)_f}\sgn:=H(\GL_{e_1})\otimes_{H(\GL_{e_1})_f}\sgn_{e_1}\otimes_{\bar{\Q_\ell}}\cdots H(\GL_{e_r})\otimes_{H(\GL_{e_r})_f}\sgn_{e_r}\otimes_{\bar{\Q_\ell}}.
\]
Note that by Section \ref{subsection Aut(C) compat}, the notion of irreducible standard representation is well-defined over $\bar{\Q_\ell}$. In the proof of \eqref{eqn ChanSavin}, compactness and projectivity of $e_\sS W_\psi$ still hold. Over $\C$, the twisted Steinberg module of $H(\sS)$ no longer corresponds to an essentially discrete series representation, but it is manifestly $\Aut(\C)$-fixed, and, as recalled in \ref{subsection types}, still corresponds to an irreducible tempered representation and so is irreducible standard and a quotient for this reason, too.

\subsubsection{The spherical module as a $J(\sS)$-module}
We seek therefore a $J(\sS)$-module $M(\sS)$ such that its restriction to $H(\sS)$ via $\phi$ is isomorphic to the \emph{spherical} $H(\sS)$-module; then ${}^{\phi\circ{}^\dagger(-)}M\simeq e_\sS W_\psi$ will be the anti-spherical module. Such a module is provided by \cite[\S 7.6]{CG}, as we now recall.

Let $\sS=[L,\sigma]_G$ and view now the parameter
\[
\phi_L\colon W_F\times\SL_2(\bar{\Q_\ell})\to\GL_n(\bar{\Q_\ell})
\]
so that $S_{\phi_L}=S_{\phi_\sS}=Z_{G^\vee(\bar{\Q_\ell})}(\phi_\sigma)$ is the group of $\bar{\Q_\ell}$ points of an algebraic group isomorphic to $\prod_{i=1}^r\GL_{e_i}$ as in Section \ref{subsection Formulas for GL(n,D)}. Unburdening notation by writing $S_{\phi_L}$ again for this group, we set
\[
\HH(\sS):=K_{S_{\phi_\sS}\times\Gm^{\times r}}(\St_{S_\phi}),
\]
where we write
\[
\St_{S_{\phi_\sS}}=\prod_{i=1}^{r}\St_{e_i}
\]
for the product of Steinberg varieties and likewise for the flag variety $\Bb_{S_{\phi_\sS}}$. For $q^{f_i}$ as in Theorem \ref{thm BushnellKutzko},  we have
\[
\HH(\sS)_q=
\HH(\sS)\otimes_{K(\pt/\Gm)}(\bar{\Q_\ell}_{q^{f_1},\dots, q^{f_r}})=H(\sS)
\]
Now we can recall
\begin{lem}
\label{lem Ktheory recap}
\begin{enumerate}
\item 
(\cite{XiIII}.) We have an algebra isomorphism $J(\sS)_0\simeq K_{S_{\phi_\sS}}(\Bb_{S_{\phi_\sS}}\times\Bb_{S_{\phi_\sS}})$
such that the diagram 
\begin{equation}
\label{eqn Xi diagram}
\begin{tikzcd}
J_0[\bq^{\pm\frac{1}{2}}]\arrow[d, hook]&H\simeq K(\St/S_{\phi_\sS}\times\Gm)\arrow[l, hook, "\phi_0" above]\arrow[r, hook, "\Phi"]&K(\Bb\times\Bb/G^\vee\times\Gm)\arrow[d, hook]
\\
\Mat_{\#W_f}(K(S_{\phi_\sS}\times\Gm\rquotient\pt))\arrow[rr, "\mathrm{Ad}(A)"]&&\Mat_{\#W_f}(K(S_{\phi_\sS}\times\Gm\rquotient\pt))
\end{tikzcd}
\end{equation}
of algebra morphisms commutes, where $\Phi$ is the morphism induced by pushforward followed by restriction along the maps 
\[
\St\into \Nn\times\Bb^\vee\hookleftarrow \Bb^\vee\times\Bb^\vee,
\]
and the vertical maps and the matrix $A$ are as in \cite{XiIII}.
\item
(\cite[(7.6.5)]{CG}.)
We have 
\[
M_\sS=K_{S_{\phi_\sS}\times\Gm^r}(\Nn_{S_{\phi_\sS}}).
\]
as $K_{S_{\phi_L}\times\Gm^{\times r}}(\St_{S_{\phi_L}})$-modules, with the natural convolution action.
\item
(\cite[Cor. 5.4.34]{CG}.)
The $H(\sS)$-action on $M_\sS$ is faithful and factors as
\begin{equation}
\label{Xi diagram}
\begin{tikzcd}
K(\St_{S_\phi}/S_{\phi_\sS}\times\Gm^{\times r})
\arrow[r, hook]\arrow[d, "\Phi", hook]
&
\End_{K(\pt/S_{\phi_\sS}\times\Gm^{\times r})}(K_{S_{\phi_\sS}\times\Gm^r}(\Nn_{S_{\phi_\sS}}))
\\
K(\Bb_{S_{\phi_\sS}}\times\Bb_{S_{\phi_\sS}}/S_{\phi_\sS\times\Gm^{\times r}})
\arrow[r, "\sim"]
&
\End_{K(\pt/S_{\phi_\sS}\times\Gm^{\times r})}(K_{S_{\phi_\sS}\times\Gm^r}(\Bb_{S_{\phi_\sS}}))
\arrow[u, "\mathrm{Th}"]
\end{tikzcd}
\end{equation}
where $\mathrm{Th}$ is the isomorphism induced by the Thom isomorphism.
\end{enumerate}
\end{lem}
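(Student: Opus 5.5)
The statement to prove is Lemma \ref{lem Ktheory recap}, which recalls three results from the literature. The plan is to reduce each part to the cited source, for a single factor $\GL_e$. The extension to the product $S_{\phi_\sS}=\prod_i\GL_{e_i}$ then follows by Künneth-type factorizations. First, the Steinberg variety, flag variety and nilpotent cone of $S_{\phi_\sS}$ are products of those of the factors $\GL_{e_i}$. Second, equivariant $K$-theory with respect to $\prod_i(\GL_{e_i}\times\Gm)$ of a product of such cellular (or at least $K$-theoretically Künneth-tractable) varieties is the tensor product over $\bar{\Q_\ell}$ of the factor $K$-groups. I would verify this via the cellular fibrations of \cite[\S 5.5]{CG}. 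Third, by Section \ref{subsubsubsection reducible root data}, $H(\sS)$ and $J(\sS)$ are tensor products of the factor algebras, and the specialization at $(q^{f_1},\dots,q^{f_r})$ is taken factor by factor.

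For part (1), I would quote Xi's theorem \cite{XiIII} for $J_0$ of type $\GL_e$. Commutativity of \eqref{eqn Xi diagram} is then a statement about each factor separately, because every map in the diagram ($\phi_0$, $\Phi$, the vertical embeddings, and $\mathrm{Ad}(A)$ with $A=\bigotimes_i A_i$) is a tensor product of the factor maps. The two points to check are the following. The map $\phi$ restricted to the lowest cell of a tensor product is the tensor product of the lowest-cell maps, since the lowest cell of $\Waff(\sS)$ is $\prod_i\cc_{0,i}$ by Section \ref{subsubsubsection reducible root data}. The Kronecker product of the matrix embeddings realizes $\Mat_{\#W_f}$, because $\#W_f=\prod_i\#W_{f,i}$.

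For part (2), I would take $M_\sS$ to be \emph{defined} as the spherical module $H(\sS)\otimes_{H_f(\sS)}\triv$. The identification \cite[(7.6.5)]{CG} of the spherical module with $K_{G\times\Gm}(\Nn)$ then holds factorwise, and tensoring gives the claim. Here the convolution action of $K(\St_{S_\phi})$ on $K(\Nn_{S_\phi})$ is the external product of the factor actions.

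For part (3), faithfulness and the factorization through the Thom isomorphism $K(\Bb)\cong K(T^*\Bb)$, followed by pushforward along the Springer resolution, is \cite[Cor. 5.4.34]{CG} for each factor. For the product, the Thom isomorphism and the convolution isomorphism $K(\Bb\times\Bb)\cong\End(K(\Bb))$ are compatible with external products. Faithfulness of a tensor product of faithful actions over the tensor product of the base rings $K(\pt/\GL_{e_i}\times\Gm)$ follows because each factor $K$-group is free over its base ring (by the Pittie–Steinberg theorem for $\Bb$ and the Thom isomorphism).

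The main obstacle is the bookkeeping of the several $\Gm$'s with different specializations $q^{f_i}$. I would handle it by keeping $r$ independent loop-rotation variables throughout and specializing only at the end, noting that each cited statement holds over $\Z[\bq_i^{\pm 1/2}]$ before specialization.
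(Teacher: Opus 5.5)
Your proposal is correct and follows the paper's route: the paper quotes Xi for (1) and Chriss--Ginzburg for (2) and (3). Its only added remark is that $K(\Bb_{S_{\phi_\sS}}\times\Bb_{S_{\phi_\sS}})\simeq\End(K(\Bb_{S_{\phi_\sS}}))$ follows from Steinberg--Pittie, because the derived subgroup of $S_{\phi_\sS}=\prod_i\GL_{e_i}$ is simply connected; this is the same freeness input you invoke, and your factor-by-factor K\"unneth bookkeeping with $r$ separate loop-rotation variables just makes explicit what the paper leaves implicit.
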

That the horizontal map is an isomorphism follows from simple-connectedness of the derived subgroup of $S_{\phi_\sS}$ and the Steinberg-Pittie  theorem \cite{Steinberg}, as explained in \cite{XiIII}.

Let $\iota\colon\C\to\bar{\Q_\ell}$ be an isomorphism; let $\iota(\J(\GL_n))$ be the $\bar{\Q_\ell}$-vector space of functions obtained by applying $\iota$ value-wise. 
\begin{cor}
\label{cor Whittaker}
\begin{enumerate}
\item 
The Whittaker representation $W_\psi$ over $\bar{\Q_\ell}$ extends to an $\iota(\J)$-module.
\item
The Whittaker representation $W_{\psi_\C}$ over $\C$ is not tempered, \textit{i.e.} $e_\sS W_{\psi_\C}$ does not extend to an abstract $\mathcal{C}(\sS)$-module.
\end{enumerate}
\end{cor}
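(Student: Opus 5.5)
The plan is to work one Bernstein block at a time. Both $W_\psi$ and $\J$ decompose along blocks: $\J(G)=\bigoplus_\sS e_\sS\star\J(G)\star e_\sS$ by Proposition \ref{prop J Morita direct sum}, and $W_\psi=\bigoplus_\sS e_\sS W_\psi$ because $W_\psi$ is smooth. So it is enough to show that each $e_\sS W_\psi$ extends to a module over $\iota(e_\sS\star\J\star e_\sS)$, and then take the direct sum. Theorem \ref{thm kenta} gives $e_\sS\star\J(G)\star e_\sS\simeq J(\sS)\otimes\End_\C(\rho_\sS)$ compatibly with $H(\sS)\otimes\End(\rho_\sS)\simeq e_\sS\star\Hs\star e_\sS$. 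Theorem \ref{thm J and Aut(C)} makes transporting along $\iota$ harmless: the $\Q$-forms $H(\sS)_\Q$, $J(\sS)_\Q$ and the maps $\phi_q$ between them are defined over $\Q$. We are therefore reduced to a statement about Hecke algebras: the antispherical module $H(\sS)\otimes_{H_f(\sS)}\sgn$, identified with $e_\sS W_\psi$ by \eqref{eqn ChanSavin}, should extend along $\phi_{\sS,q}\circ{}^\dagger(-)$ to a $J(\sS)$-module. Once that holds, we tensor with $\rho_\sS$ (Lemma \ref{lem eval}) and the Morita equivalence carries the $J(\sS)$-structure to an $e_\sS\star\iota(\J)\star e_\sS$-structure on $e_\sS W_\psi$ that restricts to the original $\Hs$-action.

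For the Hecke-algebra statement, note that the involution ${}^\dagger$ interchanges the spherical and antispherical modules. It therefore suffices to find a $J(\sS)$-module whose restriction along $\phi_{\sS,q}$ is the spherical module $H(\sS)\otimes_{H_f(\sS)}\triv$. For this I would use Lemma \ref{lem Ktheory recap}. By part (2), $M_\sS=K_{S_{\phi_\sS}\times\Gm^r}(\Nn_{S_{\phi_\sS}})$ is the spherical $\HH(\sS)$-module. By part (3), the $\HH(\sS)$-action on it factors through $\Phi$ into $K(\Bb\times\Bb/S_{\phi_\sS}\times\Gm^r)$, acting on $K(\Bb)$ via the Thom isomorphism. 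By part (1) (Xi), after adjoining $\bq^{\pm1/2}$ this convolution algebra is identified with $J_0[\bq^{\pm1/2}]$, the lowest-cell summand, in a way compatible with $\phi_0$ up to the conjugation $\mathrm{Ad}(A)$. Twisting the action by this conjugation then makes $M_\sS$ a $J_0[\bq^{\pm1/2}]$-module, and hence a $J(\sS)$-module (with the other cell summands acting by zero), whose restriction along $\phi$ is the spherical module. Specializing at $\bq=q^{f_i}$ and applying ${}^\dagger$ gives the antispherical module, which proves part 1. Throughout, reducible $S_{\phi_\sS}=\prod\GL_{e_i}$ is handled factor by factor as in Section \ref{subsubsubsection reducible root data}.

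For part 2, I would argue by contradiction. Suppose $e_\sS W_{\psi_\C}$ were an abstract $\CC(\sS)$-module. The Steinberg quotient is fine, but $W_\psi$ also surjects onto every irreducible generic representation, and in particular onto non-tempered ones such as the principal series $i_B^G(\nu)$ with $\nu$ strictly positive and generic, which is irreducible and generic. The plan is to show that a surjection from a $\CC(\sS)$-module onto a nonzero admissible quotient forces that quotient to be tempered, which would be a contradiction. Concretely, take the finite-dimensional $K$-fixed quotient $\pi^K$. The action of $\Hs^K$ on it would factor through the Fréchet algebra $\CC^K$, so $\pi^K$ would have to be a continuous, and hence tempered, module by \cite[Prop. 1]{SchStuhler}. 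The main obstacle is exactly here: the $\CC$-module structure on $W_\psi$ is only abstract, and the quotient map need not be $\CC$-linear. Projectivity of $e_\sS W_\psi$ does not help directly. What I would try first is to pass to the completion $\CC(\sS)\otimes_{H(\sS)}(H(\sS)\otimes_{H_f}\sgn)=\CC(\sS)e_{\sgn}$ and use its Plancherel description as a space of smooth sections over the unitary dual only. Comparing with the abstract structure via a central character argument, central elements of $H(\sS)$ becoming invertible in $\CC(\sS)$ should rule out non-unitary central characters in quotients.
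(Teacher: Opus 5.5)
Your part 1 is the paper's argument. Theorem \ref{thm J and Aut(C)} makes $\iota(\J)$ a ring, \eqref{eqn ChanSavin} identifies $e_\sS W_\psi$ with the antispherical module, and Lemma \ref{lem Ktheory recap} together with the twist by ${}^\dagger$ supplies the $J(\sS)$-structure.

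For part 2 you take a different route from the paper. Your route works, but as written its decisive step is only announced (``should rule out'').

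\emph{The paper's route.} $Z(H(\sS))$ is central in $\CC(\sS)$, so an abstract $\CC(\sS)$-structure on $M=e_\sS W_\psi$ acts through $\End_{Z(H(\sS))}(M)$. The lower horizontal map in \eqref{Xi diagram} is an isomorphism and $K(\pt/S_{\phi_\sS})_q=Z(H(\sS))$, so this endomorphism ring is the lowest-cell part of $J(\sS)$. This gives a ring map $\CC(\sS)\to J(\sS)$ compatible with $H(\sS)$. Such a map would turn non-tempered finite-dimensional $J(\sS)$-modules into finite-dimensional $\CC(\sS)$-modules, contradicting \cite[Prop. 1]{SchStuhler}.

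\emph{How to complete your route.} It is more elementary, needing neither Xi's isomorphism nor automatic continuity, but it should be stated as follows.
\begin{enumerate}
\item Let $z\in Z(H(\sS))$ vanish at no central character of a tempered module. Its Fourier transform is then a nowhere-zero smooth scalar on each tempered family, so $z^{-1}\in\CC(\sS)$ by the Plancherel isomorphism.
\item Consequently $z$ acts bijectively on any abstract $\CC(\sS)$-module $M$ extending the $H(\sS)$-action. For any $H(\sS)$-linear $p\colon M\to\pi$ we have $p(m)=z\,p(z^{-1}m)$, so $p=0$ whenever $z$ acts by $0$ on $\pi$. Neither the detour through $\CC(\sS)e_{\sgn}$ nor the one through $\pi^K$ is needed.
\item You must then produce such a $z$ that kills a nonzero quotient of $M$. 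Work in one tensor factor $H_e(q^f)$. Every coordinate of a tempered central character has absolute value in $q^{\frac f2\Z}$, since these central characters are unions of segments centred on unitary points. So $z=\prod_{j}(x_j-a)$ with $|a|\notin q^{\frac f2\Z}$ qualifies.
\item For any central character $\chi$ with some coordinate equal to $a$, we have $M\otimes_{Z(H)}\C_\chi\neq 0$. Indeed $M\simeq\C[x_1^{\pm1},\dots,x_e^{\pm1}]$ is free over $Z(H)=\C[x_1^{\pm1},\dots,x_e^{\pm1}]^{\Sn_e}$. This replaces your appeal to generic quotients and covers all blocks uniformly.
\end{enumerate}

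\emph{A correction.} What this argument excludes is not ``non-unitary central characters'' but central characters off the tempered spectrum. The twisted Steinberg quotient is tempered, yet its central character is not unitary.
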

\begin{proof}
By Theorem \ref{thm J and Aut(C)} (1), $\iota(\J(\GL_n))$ is well-defined and is a ring. By Lemma \ref{lem Ktheory recap}, we see that $W_\psi$ extends to an $\iota(\J(\GL_n))$-representation.

Finally, we note that, as the lower horizontal map in \eqref{Xi diagram} is an isomorphism and $K(\pt/S_{\phi_\sS})_q=Z(H(\sS))$, a $\mathcal{C}(\sS)$-module structure on $W_\psi$ would entail the existence of a ring map $C(\sS)\to J(\sS)$. As $J(\sS)$ acts on many non-tempered admissible representations, this contracts \cite[Prop. 1]{SchStuhler}. 
\end{proof}

\subsection{Relation to categorical local Langlands correspondence}
\label{subsection relation to cLLC}
In this section, all the abelian categories considered above are replaced by their natural dg-enhancements over $\Vect=\Vect_{\bar{\Q}_\ell}$, and all stacks, functors, etc. are presumed derived unless stated otherwise. (That said, the derived structure is usually trivial.) In this section we write $H_q-\Mod^\omega$ for the compact objects in the (now derived) category $H_q-\Mod$, \textit{i.e.} complexes of finite cohomological amplitude and with finitely-generated cohomology modules, and likewise for un-specialized affine Hecke algebras $\HH$.

\subsubsection{Sheaves and Hecke operators}
We continue with $G=\GL_n$ and representations over $\bar{\Q_\ell}$. 
Let $\Bun_G$ be the stack of rank $n$ vector bundles on the Fargues-Fontaine curve, and let $D=D_{\mathrm{lis}}(\Bun_G,\bar{\Q_\ell})$ be the category of lisse sheaves on $\Bun_G$ as defined in \cite{FS}. For a representation $V$ of $G^\vee$, let $T_V$ be the Hecke operator of \textit{op. cit.}.
The only properties of these (complicated) objects we require is the following relation to inner forms of Levi subgroups of $G$: Recall that $\Bun_G$ has a stratification indexed by the Kottwitz set $B(G)$, and that $D$ is equipped with functors 
\[
i_b^*\colon D\to D^b(G_b(F),\bar{\Q_\ell})
\]
and
\[
i_{b!}\colon D^b(G_b(F),\bar{\Q_\ell})\to D.
\] 
Let $\mathrm{Par}_G$ be the stack of continuous $\ell$-adic $L$-parameters. 
For $\GL_n$, the categorical local Langlands conjecture of \cite{FS} is now a theorem of Hansen-Mann \cite[Thm. 7.0.1]{HansenMann}, contigent on work in progress of Hamann-Hansen-Mann announced in the $n=2$ case by Hamann-Imai.
However, the very basic properties of this equivalence that we require could be stated in terms of the unconditional functor $c_\psi$ of \cite{HansenMann}.

Then the categorical local Langlands conjecture states in particular that there is an equivalence
\[
\mathbb{L}_\psi\colon D\overset{\sim}{\to}\IndCoh(\mathrm{Par}_G)
\]
such that
\[
\mathbb{L}_\psi(T_Vi_{1!}W_\psi)=V
\]
for $V\in\Perf(\mathrm{Par}_G)$ obtained by pullback of a representation $V$ of $G^\vee$ from $\pt/G^\vee$. In particular, the Whittaker sheaf $i_{1!}W_\psi$ is sent to the structure sheaf $\Oo_{\mathrm{Par}_G}$, by \cite[Thm. 7.1.1, Thm. 7.2.6]{HansenMann}.

\subsubsection{Coherent Springer sheaves and asymptotic coherent Springer sheaves}

Now we review the coherent Springer sheaves of \cite{BZCHN} and their role in \cite[\S 7]{HansenMann}. Following \cite{BZCHN}, let $\mu\colon\Nn\to\mathcal{N}\into\hat{\mathcal{N}}$ be the Springer resolution, the nilpotent cone, and the formal completion of the nilpotent cone in $\g^\vee$, of $G^\vee$, respectively. For any prestack $X$, we write $\mathcal{L}(X)=X\times_{X\times X}X$ for the loop space, \textit{i.e.} the (derived) self-intersection of the diagonal. This is a functorial operatoration, and the coherent Springer sheaf of \cite[Defs. 1.6, 4.1]{BZCHN} is defined as 
\[
\mathcal{S}:=\mathcal{L}(\mu)_*\Oo_{\mathcal{L}(\Nn/G\times\Gm)}\in\Coh(\mathcal{L}(\hat{\mathcal{N}}/G^\vee\times\Gm)),
\]
where if $\bq^{1/2}\colon\Gm\to\Gm$ is the identity character of $\Gm$, then $\Gm$ scales the fibres of $\Nn$ by $\bq^{-1}$. Note that according to the definitions of \textit{op. cit.}, a sheaf on the formal completion $\hat{\mathcal{N}}$ is equivalent to the data of a coherent sheaf on $\g^\vee$ set-theoretically supported on $\mathcal{N}$. The stack of unipotent Langlands parameters is then
\[
\Par_G^u=
\mathcal{L}_q(\hat{\mathcal{N}}/G^\vee)=
\mathcal{L}(\hat{\mathcal{N}}/G^\vee\times\Gm)\times_{\mathcal{L}(\pt/\Gm)}\sett{q}\overset{i_q}{\to}\mathcal{L}(\hat{\mathcal{N}}/G^\vee\times\Gm),
\]
where $i_q$ is base-changed from $\sett{q}\to\mathcal{L}(\pt/\Gm)=\Gm/\Gm$;
the coherent $q$-Springer sheaf is $i_q^*\mathcal{S}$. The stack $\Par_G^u$ is a connected component of $\Par_G$, see \cite[Footnote 27]{HansenMann} for a characterization of it as such.


The first of two results we need to recall is
\begin{theorem}[\cite{BZCHN}, Thms. 1.7, 1.9]
\label{thm BZCHN}
\begin{enumerate}
\item[(a)]
In the case of the principal block, there are isomorphisms $\HH\simeq\End_{\mathcal{L}(\hat{\mathcal{N}}/G^\vee\times\Gm)}(\mathcal{S})$ and $\HH|_{\bq=q}\simeq\End_{\Par_G^u}(\mathcal{S}_q)$
\item[(b)]
There is an equivalence
\[
\mathsf{S}\colon\HH-\Mod^{\omega}\overset{-\otimes_{\End(\mathcal{S})}\mathcal{S}}{\to}\bangles{\mathcal{S}}\into\Coh(\mathcal{L}(\hat{\mathcal{N}}/G^\vee\times\Gm))
\]
between the subcategory of the derived category of $\HH$-modules consisting of objects with finitely-generated	cohomology modules and the category $\bangles{\mathcal{S}}$ weakly generated by $\mathcal{S}$.
\item[(c)]
There is an equivalence
\[
\mathsf{S}_{[T,\triv]}\colon H-\Mod^{\omega}\overset{-\otimes_{\End(\mathcal{S}_q)}\mathcal{S}_q}{\to}\bangles{\mathcal{S}_q}\into\Coh(\Par_G^u)
\]
between the subcategory of compact objects of the derived category of the principal block of $\Rep(G)$ and the category $\bangles{\mathcal{S}_q}$ weakly generated by $\mathcal{S}_q$.
\item[(d)]
The equivalence in (b) sends the antispherical $\HH$-module $K(\Nn/G^\vee\times\Gm)$ to the dualizing complex $\omega_{\mathcal{L}(\hat{\mathcal{N}}/G^\vee\times\Gm)}$. The equivalence in (c) sends the antispherical $H$-module, equivalently $(W_\psi)^I$, to the structure sheaf $\Oo_{\Par_G^u}=\omega_{\Par_G^u}$. 
\item[(e)]
Parts (a) and (c) hold for any Bernstein block $\Rep(G)_\sS$: If $H(\sS)=\bigotimes H_{e_i}(q^i)$ as in \eqref{eqn psi isomorphism BK}, then there is an embedding
\[
\mathsf{S}_{\sS}\colon H(\sS)-\Mod^{\omega}
\overset{-\otimes_{\End(\mathcal{S}_{\sS,q})}\mathcal{S}_{\sS,q}}{\to}\bangles{\mathcal{S}_{\sS, q}}
\into
\bigotimes_i\Coh(\mathcal{L}(\widehat{\mathcal{N}_{e_i}}/\GL_{e_i}\times\Gm)\times_{\mathcal{L}(\pt/\Gm)}\sett{q^i})\into \Coh(\Par_G)
\]  
where $\mathcal{S}_{\sS,q}=\boxtimes_i\mathcal{S}_{\GL_{e_i},q^i}$ and 
$H(s)\simeq\End_{\Par_G}(\mathcal{S}_{\sS,q})$.
\end{enumerate}
\end{theorem}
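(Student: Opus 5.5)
Parts (a)--(d) are exactly \cite[Thms. 1.7, 1.9]{BZCHN} for the principal block of a split group, specialised to $G^\vee=\GL_n$. I would not reprove them; I would only check that the normalisations agree. Specifically, the $\Gm$ scales the fibres of $\Nn$ by $\bq^{-1}$, the isomorphism $\HH\simeq\End(\mathcal{S})$ matches the standard basis conventions of Section \ref{subsubsection AHA conventions}, and the antispherical module is sent to the dualizing sheaf. The statement about $(W_\psi)^I$ in (d) then follows by combining this with \eqref{eqn ChanSavin} for $\sS=[T,\triv]$. The remaining work is (e).

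For (e) I would proceed in three steps.
\begin{enumerate}
\item[1.] Use Theorem \ref{thm BushnellKutzko} to write $H(\sS)\simeq\bigotimes_i H_{e_i}(q^{f_i})$. Apply (a) and (c) to each factor $\GL_{e_i}$ with $q$ replaced by $q^{f_i}$. Nothing in \cite{BZCHN} uses that the specialisation parameter is the residue cardinality, only that it is not a root of unity. This gives $H_{e_i}(q^{f_i})\simeq\End(\mathcal{S}_{\GL_{e_i},q^{f_i}})$ and an equivalence $H_{e_i}(q^{f_i})-\Mod^\omega\simeq\bangles{\mathcal{S}_{\GL_{e_i},q^{f_i}}}$.
\item[2.] Take external tensor products. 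The Künneth formula for coherent sheaves on products of (derived, quasi-compact, with affine diagonal) quotient stacks gives $\End(\boxtimes_i\mathcal{S}_i)\simeq\bigotimes_i\End(\mathcal{S}_i)$. Hence $\End(\mathcal{S}_{\sS,q})\simeq H(\sS)$. The functor $-\otimes_{\End(\mathcal{S}_{\sS,q})}\mathcal{S}_{\sS,q}$ is then fully faithful on compact objects, because it is fully faithful on the generator $H(\sS)$ and both sides are compactly generated by it.
\item[3.] Embed $\prod_i\mathcal{L}(\widehat{\mathcal{N}_{e_i}}/\GL_{e_i}\times\Gm)\times_{\mathcal{L}(\pt/\Gm)}\sett{q^{f_i}}$ into $\Par_G$ as the connected component containing $\phi_L$.
\end{enumerate}

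I expect step 3 to be the main obstacle. I would first use the description of $\Par_G$ near a parameter whose restriction to inertia is fixed: this is the reduction to the tame/unipotent case of \cite[\S 7]{HansenMann}, following Dat--Helm--Kurinczuk--Moss. The parameters with inertial restriction conjugate to $\phi_L|_{I_F}$ form an open and closed substack. Up to a finite étale twist, this substack is the stack of pairs (Frobenius, monodromy) in the centralizer $Z_{\GL_n}(\phi_L|_{I_F})$ satisfying the twisted commutation relation. Since the $\phi_i$ are irreducible and pairwise not unramified twists of one another, this centralizer is $\prod_i\GL_{e_i}$ over a product of tori. The Frobenius acts on the $i$-th factor through $\mathrm{Frob}^{f_i}$, where $f_i$ is the torsion number of $\sigma_i$. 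This produces the relation $FNF^{-1}=q^{f_i}N$, which is exactly the fibre over $q^{f_i}$ of the loop space.

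The delicate points are two. First, one must verify that the finite twisting and the torus factors contribute nothing: the $\X(L)$-orbit is absorbed by the $\GL_{e_i}$ Frobenius, and the residual torus acts freely. Second, one must check that restricting to the formal completion $\widehat{\mathcal{N}}$ is compatible with the component being open and closed. Once this identification holds, pushforward along the open--closed embedding is fully faithful on $\Coh$, and composing it with step 2 yields $\mathsf{S}_\sS$.
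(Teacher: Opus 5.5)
Your proposal is correct and follows essentially the paper's route. The paper gives no argument of its own: it attributes all of (a)--(e) to \cite{BZCHN}, where the arbitrary-block statement (e) is deduced from the principal-block case in the way you outline. That is, one uses Bushnell--Kutzko types, K\"unneth, and an identification of the relevant component of $\Par_G$ with $\prod_i\mathcal{L}(\widehat{\mathcal{N}_{e_i}}/\GL_{e_i}\times\Gm)\times_{\mathcal{L}(\pt/\Gm)}\sett{q^{f_i}}$.

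One correction to your step 3: the centralizer of $\phi_L|_{I_F}$ is $\prod_i\GL_{e_i}^{f_i}$, with one copy of $\GL_{e_i}$ for each of the $f_i$ Frobenius-conjugate irreducible constituents of $\phi_i|_{I_F}$, and Frobenius permutes these copies cyclically. A Shapiro-type reduction to a single copy, with Frobenius $F^{f_i}$, then gives the $q^{f_i}$-relation directly, so there are no torus factors or finite twists to dispose of.
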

%
An early version of \cite{BZCHN} included the compatibility we record as
\begin{cor}
\label{cor compat BZCHN and BZCHNq}
The diagram
\[
\begin{tikzcd}
\HH-\Mod^\omega\arrow[r, "\mathsf{S}"]
\arrow[d]
&
\bangles{\mathcal{S}}
\arrow[d, "i_q^*"]\\
H-\Mod^\omega\arrow[r, "\mathsf{S}_q"]
&
\bangles{\mathcal{S}_q}
\end{tikzcd}
\]
commutes, where the left vertical functor $M\mapsto M_q:=M\otimes_{K(\pt/\Gm)}(\bar{\Q_\ell})_q$ of specialization of modules at $q$.
\end{cor}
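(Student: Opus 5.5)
Let $M$ be an object of $\HH-\Mod^\omega$. The claim is that $\mathsf{S}_q(M_q)\simeq i_q^*\mathsf{S}(M)$, naturally in $M$. Both composites are colimit-preserving, exact functors out of a category generated by the free module $\HH$. The plan is to compare them on $\HH$ together with its endomorphisms, and then extend.

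\textbf{Step 1 (the generator).} By Theorem \ref{thm BZCHN}(b), $\mathsf{S}(\HH)=\HH\otimes_{\End(\mathcal{S})}\mathcal{S}=\mathcal{S}$. Since $\HH_q=H$, Theorem \ref{thm BZCHN}(c) gives $\mathsf{S}_q(\HH_q)=\mathcal{S}_q=i_q^*\mathcal{S}$. So the two composites agree on the generator.

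\textbf{Step 2 (endomorphisms).} Next we need the identification to respect right multiplication by $\HH$. The map $i_q^*$ induces
\[
\End(\mathcal{S})\otimes_{K(\pt/\Gm)}(\bar{\Q_\ell})_q\to\End(i_q^*\mathcal{S}).
\]
Under the isomorphisms of Theorem \ref{thm BZCHN}(a), this should be exactly the specialization map $\HH\to\HH|_{\bq=q}$. This requires checking that the isomorphism in (a) at $q$ is constructed by base change from the one over $\mathcal{L}(\pt/\Gm)$. We expect this because both are built by convolution on the Steinberg stack and $\mathcal{L}(\mu)$ is compatible with base change along $\{q\}\to\Gm/\Gm$.

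\textbf{Step 3 (extension).} Write $i_q^*\mathsf{S}(M)=i_q^*(M\otimes_{\HH}\mathcal{S})$. The functor $i_q^*$ is symmetric monoidal pullback and commutes with colimits, and the $\HH$-action on $\mathcal{S}$ is $K(\pt/\Gm)$-linear, with $K(\pt/\Gm)$ acting through $\Oo(\mathcal{L}(\pt/\Gm))$. Hence
\[
i_q^*(M\otimes_{\HH}\mathcal{S})\simeq M\otimes_{\HH}i_q^*\mathcal{S}\simeq M\otimes_{\HH}\HH_q\otimes_{\HH_q}\mathcal{S}_q=M_q\otimes_H\mathcal{S}_q=\mathsf{S}_q(M_q).
\]
Here the middle isomorphism uses Step 2, and the tensor products are derived. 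Naturality in $M$ is clear from the formula.

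\textbf{Main obstacle.} The hard part is Step 2: showing that the identification $\End(\mathcal{S}_q)\simeq H$ is the specialization of $\End(\mathcal{S})\simeq\HH$. We also need that $\End(i_q^*\mathcal{S})$ has no extra derived endomorphisms beyond the base change. The natural first attempt is derived base change for $\mathcal{L}(\mu)_*$ along $i_q$ (proper pushforward commutes with pullback), together with the Tor-independence of $\{q\}$ over $\Gm/\Gm$. Combined with the fact that $\HH$ is free over $\Z[\bq^{\pm\frac12}]$, this should give $\mathrm{RHom}(i_q^*\mathcal{S},i_q^*\mathcal{S})\simeq \mathrm{RHom}(\mathcal{S},\mathcal{S})\otimes^L_{K(\pt/\Gm)}(\bar{\Q_\ell})_q$. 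By the concentration in degree zero from \cite{BZCHN}, this equals $H$.
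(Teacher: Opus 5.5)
Your proposal is essentially the paper's proof: your Step 3 is exactly its chain $i_q^*(M\otimes_{\HH}\mathcal{S})=M\otimes_{\HH}i_q^*\mathcal{S}=M\otimes_{\HH}\HH\otimes_{K(\pt/\Gm)}(\bar{\Q_\ell})_q\otimes_H\mathcal{S}_q=M_q\otimes_H\mathcal{S}_q$, and the paper uses your Step 2 without comment, namely that the $\HH$-action on $i_q^*\mathcal{S}$ is the specialized $H$-action. For Step 2 you only need that the map $H=\HH\otimes_{K(\pt/\Gm)}(\bar{\Q_\ell})_q\to\End(i_q^*\mathcal{S})$ induced by $i_q^*$ is the isomorphism of Theorem \ref{thm BZCHN}(a), not a full base-change formula for $\mathrm{RHom}$; Tor-independence alone would not give that formula, since $i_q$ is base-changed from $\{q\}\to\Gm/\Gm$ rather than $\{q\}/\Gm\to\Gm/\Gm$, so it also forgets the $\Gm$-equivariance and picks up Homs into the weight twists of $\mathcal{S}_q$.
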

\begin{proof}
We have 
\begin{multline*}
i_q^*(M\otimes_\HH\mathcal{S})=M\otimes_\HH i_q^*\mathcal{S}
=
M\otimes_\HH S_q
=
M\otimes_\HH H\otimes_H S_q
=
M\otimes_\HH \HH\otimes_{K(\pt/\Gm)}(\bar{\Q_\ell})_q\otimes_H S_q
=
M_q\otimes_H \mathcal{S}_q.
\end{multline*}

\end{proof}

Now we recall from \cite{Propp} a version of Theorem \ref{thm BZCHN} (a) for the asymptotic Hecke algebra $J[\bq^{\pm\frac{1}{2}}]$.
Let $N$ be a nilpotent conjugacy class for $G^\vee$. Let $Y_N$ be Lusztig's canonical basis of $K_0(\Bb_N/\Gm)$ \cite{LusBases}, where $\Bb_N$ is corresponding Springer fibre for $G^\vee$, and $\Gm$ scales the fibres as recalled above. Let $Z_N$ be the reductive centralizer of $N$ in $G^\vee$. This is a (generally disconnected) reductive group acting on $Y_N$.
The elements of $Y_N$ are $K$-theory classes of simple exotic sheaves \cite{BezMirkovic, BezLosev}. These last are projective $\stab{Z_N}{y}$-equivariant \cite[Prop. 2.2.13]{Propp}; For $y\in Y_N$ let $i_y\colon \stab{Z_N}{y}\into Z_N$ and let $\mathcal{C}_y$ be corresponding the multiplicative line bundle on $\stab{Z_N}{y}$ obtained from projective-equivariance via Sections 2.1 and 2.4 of \textit{op. cit.}. As explained in \textit{op. cit.} the sheaf 
\begin{equation}
\label{asymp Spr definition}
\bigoplus_{y\in Y_N} i_{y*}\mathcal{C}_y^\vee
\end{equation}
on $Z_N$ descends to the adjoint quotient stack $Z_N/Z_N$.
\begin{dfn}[\cite{Propp}, (2.4.11.1)]
The corresponding sheaf on $Z_N/Z_N$ to \eqref{asymp Spr definition} is the \emph{asyptotic coherent Springer sheaf}.
\end{dfn}
As pointed out in \textit{op. cit.}, it follows from \cite{BezLosev} that 
\[
J_N\simeq\End_{Z_N/Z_N}(\mathcal{S}^{Y_N})
\]
and that
\begin{equation}
\label{Jbq is End asymp Spr Gm}
J_N[\bq^{\pm\frac{1}{2}}]=\End_{Z_N\times\Gm/Z_N\times\Gm}(\mathcal{S}^{Y_N}\boxtimes\Oo_{\Gm/\Gm}).
\end{equation}
Finally, let $S_N$ be the Slodowy slice to $N$.
Fix the setup of \cite[\S 2.2.2]{Propp}. In particular, $Z_N\times\Gm$ acts on $S_N$ via the ``Jacobson-Morozov" cocharacter $\lambda^\vee_N\colon\Gm\to G^\vee\times\Gm$ gotten by extending $N$ to an $\ssl_2$-triple. Then we recall 
\begin{theorem}[Prop. 5.1.2, Cor. 5.3.14\cite{Propp}]
\label{thm propp}
\begin{enumerate}
\item[(a)] 
Consider the diagram
\begin{equation}
\label{diagram loop propp Gm only}
\begin{tikzcd}
Z_N/Z_N\times\Gm/\Gm=\mathcal{L}(\sett{N}/Z_N\times\Gm)
\arrow[rr, bend right=10, "\mathcal{L}(i_N)" below]
&
\mathcal{L}(S_N/G^\vee\times\Gm)
\arrow[l, "\mathcal{L}(p_{S_N})" above]
\arrow[r, "\mathcal{L}(i_{S_N})" above]
&
\mathcal{L}(\hat{\mathcal{N}}/G^\vee\times\Gm)
\end{tikzcd}.
\end{equation}
Then 
\[
(\mathcal{L}(p_{S_N})_*\mathcal{L}(i_{S_N})^*\mathcal{S})_0=\mathcal{S}^{Y_N}\boxtimes\Oo_{\Gm/\Gm},
\]
where $(-)_0$ indicates taking the weight $0$ graded part according to $\Gm$-equivariance, and $i_{S_N}$ and $p_{S_N}$ are the obvious maps.
\item[(b)]
Under \ref{thm BZCHN} (a) and \eqref{Jbq is End asymp Spr Gm}, 
\[
(\mathcal{L}(p_{S_N})_*\mathcal{L}(i_{S_N})^*-)_0\colon
\End_{\mathcal{L}(\hat{\mathcal{N}}/G^\vee\times\Gm)}(\mathcal{S})^{\opp}\to
\End_{Z_N\times\Gm/Z_N\times\Gm}(\mathcal{S}^{Y_N}\boxtimes\Oo_{\Gm/\Gm})^{\opp}
\]
identifies to the component $\phi_{N}$ of Lusztig's map.
\item[(c)]
The functors of induction and restriction between $\HH$ and $J[\bq^{\pm\frac{1}{2}}]$-modules identify according to the diagram
\[
\begin{tikzcd}
\HH-\Mod^\omega\arrow[rr, shift left, "\Ind_{\phi_N}" above]
\arrow[d, "\mathsf{S}"]
&&
J_N[\bq^{\pm\frac{1}{2}}]-\Mod^\omega
\arrow[d, "\sim"]
\arrow[ll, shift left, "\mathrm{Res}_{\phi_N}" below]
\\
\bangles{\mathcal{S}}
\arrow[rr, shift left, "(\mathcal{L}(p_{S_N})_*\mathcal{L}(i_{S_N})^*-)_0" above]
&&
\bangles{\mathcal{S}^{Y_N}\boxtimes\Oo_{\Gm/\Gm}}
\arrow[ll, shift left, "\mathrm{pr}_{\mathcal{S}}\circ\mathcal{L}(i_N)_*" below],
\end{tikzcd}
\]
where $\mathrm{pr}_{\mathcal{S}}=\mathsf{S}\circ\mathsf{S}^R$ is projection to the coherent Springer category, \textit{i.e.}, the composite of $\mathsf{S}$ with its right adjoint.
\end{enumerate}
\end{theorem}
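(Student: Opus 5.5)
Since this statement is quoted from \cite{Propp}, the plan below is a reconstruction of how one would establish it rather than a new argument. It uses only the geometry recalled above and the results of \cite{BezLosev}. For (a), restrict the Springer resolution to the Slodowy slice: $\mu^{-1}(S_N)=\widetilde{S}_N$ is the Slodowy variety. The Jacobson-Morozov cocharacter $\lambda^\vee_N$, twisted by $\Gm$, contracts $S_N$ to $N$ and contracts $\widetilde{S}_N$ onto the Springer fibre $\Bb_N$. Since forming loop spaces commutes with base change, $\mathcal{L}(i_{S_N})^*\mathcal{S}$ is the pushforward of $\Oo_{\mathcal{L}(\widetilde{S}_N/Z_N\times\Gm)}$. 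Pushing forward along the contraction $\mathcal{L}(p_{S_N})$ produces a graded object whose graded pieces are derived global sections over the loop space of $\Bb_N$. The weight-zero part of this should be controlled by the noncommutative Springer resolution of Bezrukavnikov-Mirkovi\'c. Concretely, the tilting generator on $\widetilde{S}_N$ should decompose, in weight zero, as a sum over the simple exotic sheaves indexed by $Y_N$. Each summand is only projectively $\stab{Z_N}{y}$-equivariant, and this twist is exactly what produces the line bundles $\mathcal{C}_y^\vee$ on $\stab{Z_N}{y}$. After descent to $Z_N/Z_N$ one obtains $\mathcal{S}^{Y_N}$, and the factor $\Oo_{\Gm/\Gm}$ accounts for the surviving loop directions of $\Gm$.

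For (b), the assignment in (a) is a functor, so it induces an algebra map on endomorphisms $\HH\simeq\End(\mathcal{S})\to\End(\mathcal{S}^{Y_N}\boxtimes\Oo_{\Gm/\Gm})=J_N[\bq^{\pm\frac12}]$. To identify this map with $\phi_N$, I would evaluate both on the Kazhdan-Lusztig basis. Under the Bezrukavnikov-Ostrik and Bezrukavnikov-Losev identification, $J_N$ is the equivariant $K$-theory of $Y_N\times Y_N$. Lusztig's structure constants $h_{w,d,z}$ are then the weight-zero coefficients of convolution of canonical-basis elements against the exotic basis, by positivity and the $a$-function bound. Comparing these coefficients with the weight-zero truncation of the convolution action shows that the two maps agree on $C_w$. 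I expect this step to be the main obstacle: one must match the grading conventions (the sign of $\bq$ and the shift by the $a$-function) and also the passage to opposite algebras.

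Part (c) then follows formally. Restriction along $\phi_N$ is tensoring with the bimodule $J_N[\bq^{\pm\frac12}]$, and $\Ind_{\phi_N}$ is its left adjoint. On the geometric side, the functor $(\mathcal{L}(p_{S_N})_*\mathcal{L}(i_{S_N})^*-)_0$ sends $\mathcal{S}$ to $\mathcal{S}^{Y_N}\boxtimes\Oo_{\Gm/\Gm}$ by (a), and acts on endomorphisms by $\phi_N$ by (b). Both functors commute with colimits, and $\HH$ and $\mathcal{S}$ are the respective generators, so the induction square commutes. Its right adjoint is $\mathcal{L}(i_N)_*$ followed by the right adjoint of the inclusion $\bangles{\mathcal{S}}\into\Coh$, which is $\mathrm{pr}_{\mathcal{S}}$. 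Uniqueness of adjoints then gives the restriction square.
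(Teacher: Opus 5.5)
The paper gives no proof of this statement; it is imported from \cite{Propp}. Your sketch therefore has to stand on its own, and two of its steps do not go through as written.

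The first is the claim that the restriction half of (c) is formal. The induction square does follow from (a) and (b) as you say, but the restriction square does not. Write $F=(\mathcal{L}(p_{S_N})_*\mathcal{L}(i_{S_N})^*-)_0$ and $i_N=i_{S_N}\circ s$ with $s\colon\sett{N}\to S_N$. The formal right adjoint of $F$ is $\mathcal{L}(i_{S_N})_*$ composed with a right adjoint of $(\mathcal{L}(p_{S_N})_*-)_0$. Since $p_{S_N}$ contracts the \emph{affine} slice and is not proper, nothing formal identifies that adjoint with $\mathcal{L}(s)_*$.

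This already fails for $p\colon\mathbb{A}^1=\mathrm{Spec}\,\C[x]\to\pt$ with $x$ of weight $1$. Take $M=\C[x]$ with its generator in weight $-1$. Then $(p_*M)_0=\C x$, so $\mathrm{Hom}((p_*M)_0,\C)\neq 0$, while $\mathrm{Hom}(M,s_*\C)=0$. In this example $s_*$ is right adjoint to $(p_*-)_0$ only on modules with weights $\geq 0$, where $(p_*-)_0=(s^*-)_0$; under the opposite bound one gets $(s^!-)_0$ instead.

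So (c) needs a contraction lemma. You must show that the $\Gm$-weights of $\mathcal{L}(i_{S_N})^*X$, for $X\in\bangles{\mathcal{S}}$, are bounded on the correct side. Then $F(X)\simeq(\mathcal{L}(i_N)^*X)_0$ naturally, and only then is the right adjoint of $F|_{\bangles{\mathcal{S}}}$ equal to $\mathrm{pr}_{\mathcal{S}}\circ\mathcal{L}(i_N)_*$. This is genuine geometric input about the Kazhdan action on the loop space of $\widetilde{S}_N$, not a consequence of uniqueness of adjoints.

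The second gap is in (b), and the proposed mechanism cannot work. The weight-zero projection refers to the inertial $\Gm$-grading. The variable $\bq$ in $J_N[\bq^{\pm\frac12}]=\End(\mathcal{S}^{Y_N}\boxtimes\Oo_{\Gm/\Gm})$ is instead the coordinate on the $\Gm/\Gm$ factor, and it passes through $(-)_0$ untouched. So nothing is truncated in $\bq$. You must reproduce $\phi_N(C_w)=\sum_{d\in\mathcal{D},\,z\sim_L d}h_{w,d,z}t_z$ with the full Laurent polynomials $h_{w,d,z}$. Positivity and the $a$-function bound only control the leading coefficients of the $h_{x,y,z}$; those define the product of $J$, not the map $\phi$.

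What is missing is an input tying Lusztig's combinatorially defined $\phi_N$ to the exotic basis $Y_N$, of the kind established in \cite{BezMirkovic, BezLosev}. For the lowest cell, Xi's compatibility recalled in Lemma \ref{lem Ktheory recap} is an instance. It expresses the $\HH$-action on the span of $Y_N$ as $\phi_N$ followed by the convolution action of $J_N\simeq K_{Z_N}(Y_N\times Y_N)$. You would then still have to show that your functor categorifies this statement.

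That step uses the same fact that should drive (a). By Morita invariance of Hochschild homology under the tilting equivalence on $\widetilde{S}_N$, the weight-zero part only sees the degree-zero part of the nonnegatively graded noncommutative Springer algebra of the slice. That degree-zero part is semisimple, with simples indexed by $Y_N$. Your assertion in (a) that the graded pieces are global sections over the loop space of $\Bb_N$ is only a heuristic stand-in for this argument.
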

Now we can state some of the relationship of $\J(\GL_n)$ to the categorical local Langlands correspondence.
\begin{prop}
\label{prop poor Hecke}
Let $V$ be a representation of $G^\vee$. Let $M_q$ be an $H$-module admitting a lift to an $\HH$-module $M$, \textit{i.e.} such that $\mathsf{S}_q(M_q)=i_q^*\mathsf{S}(M)$. Suppose that $M$ extends to a $J_N[\bq^{\pm\frac{1}{2}}]$-module, so that $M_q$ extends to a $J_N$-module. Then 
\[
\HomOver{\bangles{\mathcal{S}_q}}{\mathcal{S}_q}{V\otimes\mathsf{S}_q(M_q)}
\]
also extends to a $J_N$-module, where we view $V$ as a trivial bundle on $\Par_G^u$.
\end{prop}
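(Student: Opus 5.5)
The plan is to do the whole argument at the level of $\HH$ with $\bq$ formal, using Theorem \ref{thm propp}(c), and then specialize to $\bq=q$ via Corollary \ref{cor compat BZCHN and BZCHNq}.

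\textbf{Step 1: rewrite the Hom space as a module.} Because $\mathsf{S}_q$ is fully faithful onto $\bangles{\mathcal{S}_q}$ with inverse $\HomOver{}{\mathcal{S}_q}{-}$ (Theorem \ref{thm BZCHN}(c)), the space $\HomOver{\bangles{\mathcal{S}_q}}{\mathcal{S}_q}{V\otimes\mathsf{S}_q(M_q)}$ is the $H$-module $\mathsf{S}_q^R(V\otimes\mathsf{S}_q(M_q))$. Here the right adjoint $\mathsf{S}_q^R$ is applied after projecting $V\otimes\mathsf{S}_q(M_q)$ to $\bangles{\mathcal{S}_q}$. The same description holds for $\HH$ and $\mathsf{S}$.

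\textbf{Step 2: reduce to the unspecialized setting.} Since $V$ is pulled back from $\pt/G^\vee$, tensoring with it commutes with $i_q^*$. Together with $\mathsf{S}_q(M_q)=i_q^*\mathsf{S}(M)$, this gives
\[
V\otimes\mathsf{S}_q(M_q)=i_q^*\bigl(V\otimes\mathsf{S}(M)\bigr).
\]
Put $M^V:=\mathsf{S}^R(V\otimes\mathsf{S}(M))$. We want $(M^V)_q$ to agree with $\mathsf{S}_q^R(V\otimes\mathsf{S}_q(M_q))$.

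I will show this by a base change $\HomOver{}{\mathcal{S}}{\Ff}\otimes_{K(\pt/\Gm)}(\bar{\Q_\ell})_q\simeq\HomOver{}{i_q^*\mathcal{S}}{i_q^*\Ff}$. Here $\Ff$ runs over objects of $\bangles{\mathcal{S}}$ of the form $V\otimes\mathsf{S}(M)$. The base change holds because $\mathcal{S}$ is coherent and $\sett{q}\to\Gm/\Gm$ is a regular embedding cut out by one equation, so the Koszul complex for $\bq-q$ computes both sides. I expect this step to be the main technical obstacle. It needs a check that $V\otimes\mathsf{S}(M)$ stays in (or projects compatibly onto) the Springer category on both sides. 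I would try to get this from the fact that $V\otimes\mathcal{S}$ is a direct summand of pushforwards of line bundles from $\Nn$, since $V$ restricted to $\Nn$ is filtered by line bundles.

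\textbf{Step 3: the $J_N$-action on $M^V$.} By assumption $M=\mathrm{Res}_{\phi_N}(\tilde{M})$ for a $J_N[\bq^{\pm\frac{1}{2}}]$-module $\tilde{M}$. By Theorem \ref{thm propp}(c),
\[
\mathsf{S}(M)=\mathrm{pr}_{\mathcal{S}}\,\mathcal{L}(i_N)_*\tilde{\Ff},
\]
where $\tilde{\Ff}$ is the sheaf in $\bangles{\mathcal{S}^{Y_N}\boxtimes\Oo_{\Gm/\Gm}}$ corresponding to $\tilde{M}$.

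Tensoring with $V$ commutes with $\mathcal{L}(i_N)_*$ by the projection formula: $V$ pulls back to $V|_{Z_N}$ on $Z_N/Z_N\times\Gm/\Gm$. It commutes with $\mathrm{pr}_{\mathcal{S}}$ because tensoring with $V$ preserves $\bangles{\mathcal{S}}$, by the Step 2 argument.

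So $V\otimes\mathsf{S}(M)=\mathrm{pr}_{\mathcal{S}}\mathcal{L}(i_N)_*(V|_{Z_N}\otimes\tilde{\Ff})$. Tensoring with the representation $V|_{Z_N}$ commutes with the $\End(\mathcal{S}^{Y_N}\boxtimes\Oo)$-action, that is, with the $J_N[\bq^{\pm\frac{1}{2}}]$-action. Hence $V|_{Z_N}\otimes\tilde{\Ff}$ lies in $\bangles{\mathcal{S}^{Y_N}\boxtimes\Oo_{\Gm/\Gm}}$. Applying Theorem \ref{thm propp}(c) again exhibits $M^V=\mathrm{Res}_{\phi_N}$ of a $J_N[\bq^{\pm\frac{1}{2}}]$-module.

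\textbf{Step 4: specialize.} Specializing at $\bq^{1/2}=q^{1/2}$, as in the hypothesis, turns this into a $J_N$-module structure on $(M^V)_q$ that restricts along $\phi_{N,q}$ to the $H$-module in the statement. This uses the identification from Step 2.
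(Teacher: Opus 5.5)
Your outline is the paper's: pass to formal $\bq$ via $i_q^*(V\otimes\mathsf{S}(M))=V\otimes\mathsf{S}_q(M_q)$ (Corollary \ref{cor compat BZCHN and BZCHNq}), write $\mathsf{S}(M)$ through $\mathcal{L}(i_N)_*$ of a sheaf on $Z_N/Z_N\times\Gm/\Gm$, move $V$ inside by the projection formula, and apply Theorem \ref{thm propp}(c) again. The gap is in the two membership claims you add to make Steps 2 and 3 rigorous. Both are false, and Step 3 rests on them.

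\emph{First claim: $V\otimes-$ preserves $\bangles{\mathcal{S}}$.} It does not.
\begin{itemize}
\item The centre of $G^\vee$ lies in every stabilizer. It acts trivially on $\mathcal{S}$ but by the central character of $V$ on $V\otimes\mathcal{S}$.
\item More substantively, take $V=\mathrm{ad}$. Restrict to the open locus where $g$ is regular semisimple and $z^{\pm1}$ is not an eigenvalue of $\mathrm{Ad}_g$. There the loop condition forces $x=0$ transversally, and $\mathcal{S}$ is the pushforward of $\Oo$ from the $\#W$ points $(B',0)\in\Nn$ with $g\in B'$. So the stabilizer torus $Z_{G^\vee}(g)$ acts with weight $0$ on $\mathcal{S}$, hence on every object of $\bangles{\mathcal{S}}$. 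It acts through the roots on $\mathrm{ad}\otimes\mathcal{S}$.
\item Your line-bundle filtration cannot repair this: for $\lambda\neq0$, $\mathcal{L}(\mu)_*\Oo(\lambda)$ has nonzero weights on that locus, so it is not in $\bangles{\mathcal{S}}$.
\item The same computation gives $\mathrm{ad}\otimes\Oo_{\Par_G^u}\notin\bangles{\mathcal{S}_q}$. So the failure occurs exactly in the antispherical case used for Theorem \ref{thm J and Hecke stalks}.
\end{itemize}

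\emph{Second claim: $V|_{Z_N}\otimes\tilde{\Ff}\in\bangles{\mathcal{S}^{Y_N}\boxtimes\Oo_{\Gm/\Gm}}$.} You derive this from the fact that tensoring with $V|_{Z_N}$ commutes with the $J_N[\bq^{\pm\frac12}]$-action. That is a non sequitur: commuting with endomorphisms says nothing about membership in the generated subcategory. The conclusion is also false. For $N=0$ and $\tilde{\Ff}=\mathcal{S}^{Y_0}\boxtimes\Oo_{\Gm/\Gm}$, Theorem \ref{thm propp}(a) and the computation above show that the generic stabilizer torus of $G^\vee/G^\vee$ acts trivially on $\mathcal{S}^{Y_0}$. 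It acts through the weights of $V$ on $V\otimes\tilde{\Ff}$. So you can neither commute $V\otimes-$ past $\mathrm{pr}_{\mathcal{S}}$ by your argument, nor feed $V|_{Z_N}\otimes\tilde{\Ff}$ into Theorem \ref{thm propp}(c) as an object of its domain.

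What the argument needs is not membership but three compatibilities of $\mathsf{S}^R=\mathrm{Hom}(\mathcal{S},-)$:
\begin{enumerate}
\item[(i)] $\mathsf{S}^R(V\otimes\mathrm{pr}_{\mathcal{S}}X)\simeq\mathsf{S}^R(V\otimes X)$ for $X=\mathcal{L}(i_N)_*\tilde{\Ff}$.
\item[(ii)] For sheaves $\mathcal{G}$ such as $V|_{Z_N}\otimes\tilde{\Ff}$ lying outside $\bangles{\mathcal{S}^{Y_N}\boxtimes\Oo_{\Gm/\Gm}}$, $\mathsf{S}^R\mathcal{L}(i_N)_*\mathcal{G}$ is still the restriction along $\phi_N$ of $\mathrm{Hom}(\mathcal{S}^{Y_N}\boxtimes\Oo_{\Gm/\Gm},\mathcal{G})$. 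This goes beyond what Theorem \ref{thm propp}(c) literally asserts.
\item[(iii)] Base change of $\mathsf{S}^R$ along $i_q$ for objects not in $\bangles{\mathcal{S}}$.
\end{enumerate}
The paper does not claim membership either. It writes $\mathsf{S}(M)=\mathcal{L}(i_N)_*\Ff$, applies the projection formula, and invokes Theorem \ref{thm propp}(c) directly for $\mathcal{L}(i_N)_*(\Ff\otimes p_2^*\Res{G^\vee}{Z_N}V)$. In effect it uses (i)--(iii) without comment. You should replace your Step 2 and Step 3 justifications with arguments for these.

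Your Koszul argument for (iii) is also incomplete as stated. The map $\sett{q}\to\Gm/\Gm$ is not a closed immersion cut out by $\bq-q$: it also forgets the $\Gm$-equivariance, factoring through $\sett{q}/\Gm\into\Gm/\Gm$ (cf. Remark \ref{rem Oron graded variant}). So the Koszul complex only computes the graded specialization, and you must still account for all $\Gm$-twists.
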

\begin{proof}
Viewing $V$ first a trivial bundle on $\mathcal{L}(\hat{\mathcal{N}}/G^\vee\times\Gm)$, we note that 
\[
i_q^*(V\otimes_{\Oo_{\mathcal{L}(\hat{\mathcal{N}}/G^\vee\times\Gm)}} \mathsf{S}(M))=V\otimes_{\Oo_{\Par_G^u}}\mathsf{S}_q(M_q)
\]
by Corollary \ref{cor compat BZCHN and BZCHNq}. Hence it is enough to show that the $\HH$-module $\HomOver{\bangles{\mathcal{S}}}{\mathcal{S}}{V\otimes\mathsf{S}(M)}$ extends to a $J_N[\bq^{\pm\frac{1}{2}}]$-module.
By Theorem \ref{thm propp}, by hypothesis we have 
$\mathsf{S}(M)=\mathcal{L}(i_N)_*\Ff$ for some sheaf $\Ff$. With the diagram
\[
\begin{tikzcd}
Z_N\times\Gm/Z_N\times\Gm\arrow[r, "\mathcal{L}(i_N)"]\arrow[d, "p_1"]&
\mathcal{L}(\hat{\mathcal{N}}/G^\vee\times\Gm)\arrow[d, "p_2"]
\\
\pt/Z_N\times\Gm\arrow[r]&\pt/G^\vee\times\Gm
\end{tikzcd}
\]
we have $p_1^*V\otimes\mathcal{L}(i_N)_*\Ff=\mathcal{L}(i_N)_*(\Ff\otimes p_2^*\Res{G^\vee}{Z_N}V)$ by the projection formula. By Theorem \ref{thm propp} (c) again, 
\[
\HomOver{\bangles{\mathcal{S}}}{\mathcal{S}}{\mathcal{L}(i_N)_*(\Ff\otimes p_2^*\Res{G^\vee}{Z_N}V)}
\]
extends to a $J_N[\bq^{\pm\frac{1}{2}}]$-module. 
\end{proof}	
\begin{theorem}
\label{thm J and Hecke stalks}
\begin{enumerate}
\item 
Let $V$ be a representation of $G^\vee$. Then $i_1^*\mathbb{L}_\psi^{-1}(V)=i_1^*T_Vi_{1!}W_\psi$ extends to a $\mathcal{J}(\GL_n)$-module.
\item
If $n=2$, then all stalks $i_b^* T_Vi_{1!}W_\psi$ extend to $\J(G_b)$-modules.
\end{enumerate}

\end{theorem}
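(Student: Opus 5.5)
The plan is to work one Bernstein block at a time and reduce to the setting of Proposition \ref{prop poor Hecke}. Since $\J(\GL_n)=\bigoplus_\sS e_\sS\star\J\star e_\sS$, and extension to a $\J$-module can be checked on each $e_\sS$-component, it suffices to show that each $e_\sS\, i_1^*T_Vi_{1!}W_\psi$ extends to an $e_\sS\star\J\star e_\sS$-module. By Theorem \ref{thm kenta}, the latter algebra is $J(\sS)\otimes\End(\rho_\sS)$, so the problem becomes one of modules over the affine Hecke algebra $H(\sS)$ that extend to $J(\sS)$-modules, transported over $\bar{\Q_\ell}$ via Theorem \ref{thm J and Aut(C)}. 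The spectral side of the block is handled by the reduction of \cite[\S 7]{HansenMann}: on the connected component of $\Par_G$ corresponding to $\sS$, $\mathbb{L}_\psi$ restricted to $\Rep(G)_\sS$ is identified with $\mathsf{S}_\sS$ of Theorem \ref{thm BZCHN}(e). Under this identification, $T_V$ corresponds to tensoring with the trivial bundle $V$, restricted to that component. The restriction of $V$ to $S_{\phi_\sS}=\prod_i\GL_{e_i}$ is again a representation, so $e_\sS\, i_1^*T_Vi_{1!}W_\psi$ corresponds to $\HomOver{\bangles{\mathcal{S}_{\sS,q}}}{\mathcal{S}_{\sS,q}}{V\otimes \mathsf{S}_\sS(e_\sS W_\psi)}$.

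Next I check the hypotheses of Proposition \ref{prop poor Hecke} for $M_q=e_\sS W_\psi$. By \eqref{eqn ChanSavin}, this module is the antispherical module, which is the specialization at $\bq=q$ (factorwise at $q^{f_i}$) of the antispherical $\HH(\sS)$-module $K_{S_{\phi_\sS}\times\Gm^r}(\Nn_{S_{\phi_\sS}})$ of Lemma \ref{lem Ktheory recap}(2). Its image under $\mathsf{S}$ is the dualizing sheaf, by Theorem \ref{thm BZCHN}(d), and Corollary \ref{cor compat BZCHN and BZCHNq} gives $\mathsf{S}_q(M_q)=i_q^*\mathsf{S}(M)$. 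The same lemma, together with Xi's diagram \eqref{eqn Xi diagram}, shows that $M$ extends to a $J_0[\bq^{\pm\frac12}]$-module, where $N=0$ is the lowest cell. Proposition \ref{prop poor Hecke} then applies, factor by factor in the tensor product $\HH(\sS)=\bigotimes_i\HH_{e_i}$, since every construction is compatible with the external products $\boxtimes_i$. It shows that the Hom space above extends to a $J_0(\sS)$-module, hence to a $J(\sS)$-module via the projection onto the summand $J_0$. Tensoring with $\rho_\sS$ and applying $\Upsilon_{J(\sS)}$ then gives the required $\J$-module, and this proves (1).

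For (2), with $n=2$, the Kottwitz set gives $G_b$ equal to a Levi subgroup of $\GL_2$ (hence a torus), an inner form $D^\times$, or $\GL_2$ itself. For $b$ basic non-trivial, $G_b(F)=D^\times$, and by Example \ref{rem sc H=J} we have $\Hs(D^\times)=\J(D^\times)$, so every representation extends trivially. For $b$ non-basic, $G_b$ is a torus, or a twist of $\GL_1\times\GL_1$, and again $\Hs=\J$. The remaining case $b=1$ is (1). The one subtlety is that the stalks are complexes, not modules. I will interpret the statement as asserting that each cohomology group extends to a $\J(G_b)$-module, and then the argument above is termwise.

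The step I expect to be the main obstacle is the identification of $T_V$ with tensoring by $V$ on the block-by-block spectral category, compatibly with $\mathsf{S}_\sS$ and with the stalk $i_1^*$. This must be extracted from \cite{HansenMann} (via $c_\psi$), and checked to respect the Chan--Savin identification. I would first try to use the fact that $c_\psi$ is $\Perf(\Par_G)$-linear and sends $W_\psi$ to $\Oo$.
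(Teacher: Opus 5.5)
Your argument for (1) matches the paper's. You reduce via \cite[\S 7]{HansenMann} to $\HomOver{\bangles{\mathcal{S}_q}}{\mathcal{S}_q}{V}$. You note that for $V=\triv$ this is the Chan--Savin antispherical module, which lifts to an $\HH$-module extending to a $J_0[\bq^{\pm\frac12}]$-module. You then conclude by Proposition \ref{prop poor Hecke}. The paper reduces to a single $\GL_r$ factor rather than arguing factorwise. If you argue factorwise, note that $\mathrm{Res}^{G^\vee}_{S_{\phi_\sS}}V$ is only a direct sum of external tensor products, which is harmless.

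The gap is in (2), where you assert that every basic $b\neq 1$ has $G_b(F)=D^\times$. This is false.
\begin{itemize}
\item Basic elements of $B(\GL_2)$ are indexed by the Kottwitz invariant $\kappa\in\Z$, and only odd $\kappa$ gives the quaternion algebra.
\item For even $\kappa=2k\neq 0$, i.e.\ the semistable bundle $\Oo(k)^{\oplus 2}$, one has $G_b(F)\simeq\GL_2(F)$, where $\J(G_b)\neq\Hs(G_b)$. Compare the paper's example: the trivial representation of $G_b(F)$ fails to extend exactly for these $b$.
\item These strata are not vacuous for your sheaves. For $V={\det}^{k}$, $T_Vi_{1!}W_\psi$ is supported on such a $b$, with stalk a possibly twisted Whittaker representation of $\GL_2(F)$.
\end{itemize}
So your case analysis omits infinitely many strata where extension to a $\J$-module is a genuine condition. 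Part (1) only concerns $i_1^*$, so it does not cover them as stated.

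The missing step is what the paper compresses into ``for basic central $b$, the claim follows from the first statement.'' Such $b$ are central. The invertible Hecke operator $T_{{\det}^{k}}$ (tensoring by $\Oo(k)$, with the appropriate sign convention) identifies $\Bun_G^1$ with $\Bun_G^b$ and commutes with every $T_V$. Writing $T_V=T_{{\det}^{k}}\circ T_{V\otimes{\det}^{-k}}$, the stalk $i_b^*T_Vi_{1!}W_\psi$ is identified with $i_1^*T_{V\otimes{\det}^{-k}}i_{1!}W_\psi$, up to a twist by a character $\chi\circ\det$ depending on normalizations. It then remains to check that extendability to a $\J(\GL_2)$-module is stable under such twists. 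For unramified $\chi$ this is the automorphism of each $J(\sS)$ rescaling $t_w$ by a scalar depending only on the length-zero component of $w$, which is compatible with $\phi$. Part (1) then applies.
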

\begin{proof}
As explained in the proof of \cite[Thm. 7.2.6]{HansenMann}, it is enough to show, for a single type $\GL_r$ affine Hecke algebra $H$, that 
$\HomOver{\mathcal{S}_q}{\mathcal{S}_q}{V}$ extends to a $J$-module. Note that when $V=\triv$, we obtain the antispherical $H$-module, which lifts by Theorem \ref{thm BZCHN}, Corollary \ref{cor compat BZCHN and BZCHNq} and Lemma \ref{lem Ktheory recap} to an $\HH$-module extending to a $J_0[\bq^{\pm\frac{1}{2}}]$-module by Lemma \ref{lem Ktheory recap} and Corollary \ref{cor Whittaker}. The claim now follows from Proposition \ref{prop poor Hecke}. 

For the second statement, note that, for $n=2$, $\mathcal{J}(G_b)=\mathcal{H}(G_b)$ for $b$ not central basic. For basic central $b$, the claim follows from the first statement.
\end{proof}

\begin{rem}
Assuming compatibility of \cite{BZCHN} and \cite{HansenMann}, Theorem \ref{thm J and Hecke stalks} also applies to the principal blocks of general split groups. 
\end{rem}

\begin{rem}
\label{rem Oron graded variant}
We can view Proposition \ref{prop poor Hecke} as a poor man's compatibility with the Hecke operators, in the sense that, to start with, the original module must be lifted to an $\HH$-module. On the other hand, as explained to us by Propp, there is a $q$-specialized version of Theorem \ref{thm propp}. If $\iota_q$ is the base-change of $\sett{q}/\Gm\into\Gm/\Gm$, then one has the diagram
\begin{equation*}
\begin{tikzcd}
Z_N/Z_N\times\sett{q}/\Gm
\arrow[d, "\iota_q^N"]
\arrow[rr, bend left =10, "\mathcal{L}_q(i_N)"]
&
\mathcal{L}_q(S_N)\arrow[l, "\mathcal{L}_q(p_{S_N})"]
\arrow[r, "\mathcal{L}_q(i_{S_N})" below]
\arrow[d, "\iota'_q"]
&
\mathcal{L}(\hat{\mathcal{N}}/G^\vee\times\Gm)\times_{\Gm/\Gm}\sett{q}/\Gm
\arrow[d, "\iota_q"]
\\
Z_N/Z_N\times\Gm/\Gm
\arrow[rr, bend right=10, "\mathcal{L}(i_N)" below]
&
\mathcal{L}(S_N/G^\vee\times\Gm)
\arrow[l, "\mathcal{L}(p_{S_N})" above]
\arrow[r, "\mathcal{L}(i_{S_N})" above]
&
\mathcal{L}(\hat{\mathcal{N}}/G^\vee\times\Gm)
\end{tikzcd}
\end{equation*}
and the natural modification of Theorem \ref{thm propp} holds using the top functors. Note, however, that the stack $\Par_G^u$ is now replaced by the stack $\Par_G^u/\Gm$ of graded unipotent $L$-parameters, in the sense of \cite[\S 3.3]{BZCHNIHES}.
\end{rem}

In light of the connection between each $\J(G_b)$ and the corresponding Schwartz algebra $\mathcal{C}(G_b)$ on one hand, and the notion of temperedness in the geometric Langlands correspondence on the other, it is natural to wonder if the difference between quasicoherent and ind-coherent sheaves under $\mathbb{L}_\psi^{-1}$ is related to extension or non-extension of stalks to $\J(G_b)$-modules.

\begin{ex}
One example of a perfect complex on $\mathrm{Par}_G$ is a trivial vector bundle, and we have seen in Theorem \ref{thm J and Hecke stalks} that $i_1^*T_Vi_{1!}W_\psi$ extends to a $\mathcal{J}(\GL_n)$-module.
\end{ex}



\begin{ex}
The constant sheaf $\underline{\bar{\Q_\ell}}_{\Bun_G}$ should be sent to an object of $\IndCoh(\mathrm{Par}_G)$ witnessing the expected infinite cohomological amplitude of $\mathbb{L}_\psi$.
The stalks $i_b^*\underline{\bar{\Q_\ell}}_{\Bun_G}$, \textit{i.e.} the trivial representations of $G_b(F)$, fail to extend to $\J_b$-modules precisely when $G_b(F)=\GL_2(F)$, \textit{i.e.} $b$ is basic with even Kottwitz invariant.
\end{ex}

Other examples are less encouraging of a simple relationship between quasicoherence and extension to $\J_b$-modules. 
\begin{ex}
The trivial representation $i_{1!}\triv$ does not extend to a $\J_{1}$-module, but it is expected that $\mathbb{L}_\psi(i_{1!}\triv)\in\QCoh(\mathrm{Par}_G)$, although concentration in cohomological degree $0$ is not expected.
\end{ex}

\section{Further applications}
\label{section further applications}
\subsection{Hochschild homology and cyclic homology}
Solleveld has shown \cite[Thm. 2]{SolleveldHochschild} in particular that the Hochschild homology of $H_e(q^f)$ is independent of the integer $f$. We offer a conceptual explanation for affine Hecke algebras of type $\GL_r$. 
\begin{theorem}
\label{thm HH}
\begin{enumerate}
\item[(a)]
We have
\[
\HHH_*(\mathcal{J}(G))=\bigoplus_\sS\bigotimes_i \bigoplus_{\lambda\,\vdash e(\sS, i)}\bigotimes_{j}\HHH_*(R(\GL_{r(\lambda, j)}))=
\bigoplus_\sS\bigotimes_i\bigoplus_{\lambda\,\vdash e(\sS, i)}\bigotimes_{j}\Omega^*(T_{r_j})^{\Sn_{r_j}},
\]
where the $\lambda=(\lambda_i)_i$ run over partitions of $e(i,\sS)$ and $r(\lambda, j)$ is the number of $\lambda_i$ equal to $j$.
\item[(b)]
The inclusion $\Hs(G)\into\J(G)$ induces an isomorphism on Hochschild homology.
\item[(c)]
The inclusion $\Hs(G)\into \J(G)$ induces an isomorphism of cyclic homologies.
\end{enumerate}
\end{theorem}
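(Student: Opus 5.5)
For part (a), I would use the Morita invariance of Hochschild homology together with Proposition \ref{prop J Morita direct sum}. Hochschild homology commutes with direct sums of algebras, and for $\J(G)$ the relevant sum is the one over Bernstein blocks, $\J(G)=\bigoplus_\sS e_\sS\star\J(G)\star e_\sS$. These algebras are non-unital, but each summand is an inductive limit of unital corners, and Hochschild homology commutes with filtered colimits, so the direct-sum reduction is harmless.

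By Theorem \ref{thm kenta}, each block satisfies $e_\sS\star\J(G)\star e_\sS\simeq J(\sS)\otimes\End_\C(\rho_\sS)$, so it is Morita equivalent to $J(\sS)=\bigotimes_i J(e_i)$. The Künneth formula for Hochschild homology of tensor products of algebras over $\C$ reduces the computation to a single $J(e)$. Xi's description gives $J(e)=\bigoplus_\cc J_\cc$ with $J_\cc\simeq \Mat_c(R(\prod_j\GL_{r_j}))$, so a second application of Morita invariance and Künneth leaves only $\HHH_*(R(\GL_r))$. Since $R(\GL_r)=\C[T_r]^{\Sn_r}=\C[x_1,\dots,x_{r-1},x_r^{\pm1}]$ is smooth, Hochschild--Kostant--Rosenberg gives $\HHH_*=\Omega^*(T_r/\Sn_r)$. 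This is identified with $\Omega^*(T_r)^{\Sn_r}$ using that $T_r/\Sn_r$ is smooth, so invariant forms descend; the identification is standard in type $A$.

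For part (b), I would proceed in four steps.

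\textbf{Step 1: reduce to a single factor.} By Theorem \ref{thm BushnellKutzko}(a), $e_\sS\star\Hs(G)\star e_\sS\simeq H(\sS)\otimes\End(\rho_\sS)$. The inclusion is compatible with $\phi_{\sS,q}\circ{}^\dagger$, so by Morita invariance and Künneth for the tensor factors it suffices to prove that $\phi_q\colon H_e(q^f)\to J(e)$ induces an isomorphism on $\HHH_*$.

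\textbf{Step 2: compute the source.} For $\HHH_*(H_e(q^f))$ I would invoke Solleveld \cite{SolleveldHochschild}. There it is computed, independently of $q$, as $\HHH_*$ of $\C[X_*\rtimes W]$, which equals $\bigoplus_{w\in\Sn_e/\sim}\Omega^*(T^w)^{Z(w)}$. For $w$ of cycle type $\mu$, the fixed torus is $T^w\simeq\prod_j (\C^\times)^{r(\mu,j)}$ and the centralizer acts through $\prod_j\Sn_{r(\mu,j)}$. Hence the $w$-summand equals $\bigotimes_j\Omega^*(T_{r(\mu,j)})^{\Sn_{r(\mu,j)}}$, which is exactly the $J_\cc$-summand of (a) for the cell $\cc$ matched to $\mu$ by the Kazhdan--Lusztig bijection \cite{KLmap}.

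\textbf{Step 3: show $\phi_q$ realizes this isomorphism.} This is the main obstacle: the two sides being abstractly isomorphic is not enough. I would first use the filtration of $H$ by two-sided cells, $H_{\le\cc}$, whose associated graded is compatible with $\phi$ by Lusztig's results. I would then argue via the trace pairing with simple modules, or equivalently through the cocenter in degree $0$ and a spectral, localization-over-$Z(H)$ argument in higher degrees. The goal is that $\phi_q$ induces an isomorphism on each graded piece. Concretely, I would localize both sides over the common centre $R(\GL_e)=Z(H)$, which is finite over $Z(J_\cc)$, and compare at generic points of each stratum of $T/W$. Over $\C$ and away from reducibility, $\phi_q$ becomes a Morita equivalence by the standard-module matching of Corollary \ref{cor simple J modules}. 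One would then need to extend this across the stratification using the smoothness in Step 2, the step I am least sure of. A fallback is to cite \cite{BDD} for the principal block and transport it along Theorem \ref{thm kenta}.

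\textbf{Step 4: cyclic homology (part (c)).} Part (c) follows from (b) by Connes' SBI long exact sequence and the five lemma, applied inductively in degree.
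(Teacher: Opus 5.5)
Your part (a), the reduction in Step 1 to a single factor $\phi_q\colon H_e(q^f)\to J(e)$, the abstract matching in Step 2 via the Kazhdan--Lusztig bijection, and the deduction of (c) from (b) by Connes' sequence all agree with the paper. The gap is Step 3, which carries all of (b), and your main line there does not work as stated.

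Localizing over $Z(H)$ helps only on the regular locus. There only $J_{\cc_0}\cong\Mat_{\#W}(R(\GL_e))$ survives, and $\phi_q$ becomes an isomorphism of Azumaya algebras. At the generic point of a reducibility stratum, however, $H$ and $J$ are not Morita equivalent. Already for $\GL_2$, along the curve of central characters of twisted Steinberg modules, the fibre of $H$ is a $4$-dimensional non-semisimple algebra, while the fibre of $J$ is $\Mat_2(k)\oplus k$. Corollary \ref{cor simple J modules} does not change this. Comparing $\HHH_*$ at exactly those points is what has to be proved. ``Extending across the stratification using smoothness'' does not supply it: smoothness of the targets $\Omega^*(T^w)^{Z(w)}$ says nothing about the kernel and cokernel of $\HHH_*(\phi_q)$, which could be supported on the small strata.

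The idea you are missing is the one the paper uses. Solleveld's isomorphism $\HHH_*(H_e(q^f))\cong\bigoplus_{[w]}\Omega^*(T^w)^{Z(w)}$ is not merely abstract. It is assembled from the maps $\HHH_*(\pi_{w,i,\epsilon})$ induced by algebraic families of $H$-modules, and these families can be taken to be families of standard modules. Standard modules are exactly the simple $J$-modules, so each $\HHH_*(\pi_{w,i,\epsilon})$ factors through $\HHH_*(\phi_q)$. Consequently the comparison map from $\HHH_*(J)$ to Solleveld's model is surjective. Step 2 shows that both sides are abstractly isomorphic finitely generated modules over $Z(H)\cong Z(J_{\cc_0})$, and a surjection between such modules over a Noetherian ring is bijective. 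Hence $\HHH_*(\phi_q)$ is an isomorphism. Your remark about pairing with simple modules points this way; what is needed is to pair globally with families of simple $J$-modules (standard modules), not to localize.

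Your fallback, by contrast, is a correct and genuinely different route, granting (as the paper's introduction states) that \cite{BDD} proves (b) for the principal block. You need to make one point explicit. By Theorem \ref{thm kenta} and Morita invariance, the map on the $\sS$-summand is $\HHH_*$ of $\bigotimes_i\phi_{q^{f_i}}\circ{}^\dagger$. For $f_i>1$, and for inner forms, the factor $H_{e_i}(q^{f_i})$ is not the principal-block algebra of $G$. It is the Iwahori--Hecke algebra of $\GL_{e_i}$ over the unramified extension of $F$ of degree $f_i$. So you must apply the principal-block theorem to that group, and then use K\"unneth and the fact that ${}^\dagger$ is an automorphism. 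This is shorter than the paper's argument but rests entirely on \cite{BDD} for the comparison, whereas the paper proves the single-factor case directly from \cite{SolleveldHochschild} and the Kazhdan--Lusztig bijection.
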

\begin{proof}
The first statement follows from Proposition \ref{prop J Morita direct sum}, the K\"unneth formula for Hochschild homology, the Hochschild-Kostant-Rosenberg theorem, and smoothness of the representation ring of a general linear group, so that 
\begin{equation}
\label{eqn fixed forms}
\Omega^*(T_{r_j}^{\Sn_{r_j}})=\Omega^*(T_{r_j})^{\Sn_{r_j}}.
\end{equation}

First we deal with the case of a single tensor factor.
By \cite[Prop. 9.3]{KLmap}, the two-sided cells of $\Waff(\GL_e)$ and the conjugacy classes of $\Sn_e$ are in natural bijection, via the Kazhdan-Lusztig map. Namely, the cell corresponding to a partition $N_1+\cdots+N_k=N$ of $N$ is sent to the corresponding conjugacy class in $\Sn_N$. If $w$ is a member of this conjugacy class, then $w$ has $N_i$-many $i$-cycles in its cycle decomposition, and 
\[
Z_{\Sn_N}(w)=\left(\sett{1}^{N_1}\times\Z/2\Z^{N_2}\times\cdots\times\Z/k\Z^{N_k}\right)\rtimes \Sn_{N_1}\times\cdots\times\Sn_{N_k}.
\]
Note that $T^w$ is naturally a maximal torus of $\GL_{N_1}\times\cdots\times\GL_{N_k}\subset\GL_N$, and that 
\[
(T^w)^{Z_{\Sn_N}(w)}=(T^w)^{\Sn_{N_1}\times\cdots\times\Sn_{N_k}},
\]
as the subgroup $\sett{1}^{N_1}\times\Z/2\Z^{N_2}\times\cdots\times\Z/k\Z^{N_k}$ is generated by the commuting cycles in $w$. Together with \eqref{eqn fixed forms}, this shows that the Hochschild homologies are isomorphic as abstract $Z(H)\simeq Z(J_0)$-modules. Now, the families of modules used in \cite{SolleveldStandard} can be taken to the be the standard modules, and so the isomorphisms $\mathrm{HH}_*(\pi_{w,i,\epsilon})$ of \textit{op. cit.} factor through $\mathrm{HH}_*(\phi)$. Therefore $\mathrm{HH}_*(\phi)$ is an isomorphism. Tensoring together, (b) holds. 

The Connes exact sequence \cite[Cor. 2.2.3]{Loday} now gives (c).


\end{proof}

\subsubsection{Hochschild cohomology}
It is easy to see that the Hochschild cohomologies of $\Hs(G)$ and $\J(G)$ do not agree, for instance for any $(K_\sS,\rho_\sS)$, we have, by Morita invariance,
\[
\HHH^0(e_\sS\Hs(G) e_\sS)=\HHH^0(H(\sS))=Z(H(\sS))=\C[T^\vee_\sS]^{W_\sS}
\]
whereas, again by Morita invariance,
\[
\HHH^0(e_\sS\J(G) e_\sS)=\HHH^0(J(\sS))=Z(J(\sS))=\C[T^\vee_\sS]^{W_\sS}\oplus\bigoplus_{\cc\neq\cc_0}Z(J(\sS)_\cc).
\]
In particular, the restriction functor
\[
(e_\sS\J(G)e_\sS)-\Mod\to \Rep(G)_\sS
\]
is not an equivalence, and the two categories are not abstractly equivalent. Their derived categories are also not equivalent, by \cite{Rickard}.

\subsection{Dependence on the field}
In \cite{Karemaker}, Karemaker investigated to what extent 
\[
\Rep(\GL_n(F))\simeq H(G)-\Mod
\]
depends on $F$, where by $H(G)-\Mod$ denotes non-degenerate modules. As we have
\[
H(G)=\bigoplus_\sS e_\sS\star H(G)\star e_\sS=\bigoplus_\sS\bigotimes_i H_{e(i,\sS)}(q^{f(i,\sS)})\otimes\End_\C(\rho_\sS)\Morita \bigoplus_\sS\bigotimes_i H_{e(i,\sS)}(q^{f(i,\sS)}),
\]
this amounts to studying Morita invariance between various affine Hecke algebras of type $\GL_e$. Karemaker showed that there is always an equivalence $\Rep(\GL_2(F_1))\simeq\Rep(\GL_2(F_2))$ for any two $p$-adic fields $F_1,F_2$ \cite[Thm. 2]{Karemaker}. 

On the other hand, recall that not only are Hochschild homology and co-homology derived invariants, but so is the cap product action, in particular, the $Z(H)$-module structure on $\HHH_*(H)$ is a derived invariant \cite{Keller}. As pointed out in \cite{SolleveldHochschild}, work of Yan \cite{Yan} implies that these module structures are unlikely to match for $e(i,\sS)=e(i',\sS')$ but $q^{f(i,\sS)}\neq q^{f(i',\sS')}$. Therefore one does not expected derived or Morita equivalences like the above for $\GL_n(F)$ for $n>2$. On the other hand, we have
\begin{prop}
\label{prop J indep of F GL(n)}
Let $G$ be an inner form of $\GL_{md}$ corresponding to an $F$-central division algebra $D$ with $d=\dim_FD$. Then the category $J(G(F))-\Mod$ depends only on $m$ and $d$.
\end{prop}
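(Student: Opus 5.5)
The plan is to use the Morita description of Proposition \ref{prop J Morita direct sum}(b). There, $\J(G)-\Mod$ is a product over Bernstein blocks $\sS$ of categories of modules over $\bigotimes_i J(H_{e(i,\sS)}(q^{f(i,\sS)}))$. The first step is to observe that Lusztig's ring $J(e)$ of type $\GL_e$ does not depend on the parameter. It is defined over $\Z$ with basis $\sett{t_w}_{w\in\Waff(\GL_e)}$, and its structure constants are the integers $\gamma_{x,y,z}$, which depend only on the Coxeter group $\Waff(\GL_e)$. Hence $J(H_e(q^f))\simeq J(e)\otimes_\Z\C$ for every $f$. So the category attached to $\sS$ depends only on the multiset of integers $e(i,\sS)$; the torsion numbers $f(i,\sS)$ and the residue field size drop out. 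Consequently
\[
\J(G)-\Mod\simeq\prod_{\sS}\Big(\bigotimes_i J(e(i,\sS))\Big)-\Mod,
\]
and it suffices to show that the indexing data, namely the number of blocks $\sS$ with each given multiset $\{e(i,\sS)\}_i$, depends only on $m$ and $d$.

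The second step is this combinatorial count. Write $\sS=[\prod_i\GL_{n_i}(D)^{e_i},\boxtimes_i\sigma_i^{\boxtimes e_i}]$ with the $\sigma_i$ pairwise inertially inequivalent. The multiset $\{e_i\}$ together with the $n_i$ satisfies $\sum_i n_ie_i=m$. For fixed $(n_i,e_i)$, the blocks are indexed by choices of distinct inertial classes of supercuspidals of the $\GL_{n_i}(D)$, up to permutation of equal labels. For every $n$ and every $F$ the set of inertial classes of supercuspidals of $\GL_n(D)$ is countably infinite: for instance, varying depth already produces infinitely many classes, and the whole set is countable. Hence for each admissible datum $(n_i,e_i)_i$ the set of blocks with that datum is countably infinite. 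This holds equally for each fixed shape, since choosing finitely many distinct elements from countably infinite sets still gives a countably infinite set. So the isomorphism type of the product category is determined by the finite set of shapes $\{(n_i,e_i)\}$ with $\sum n_ie_i=m$, each shape contributing countably many copies of $\bigotimes_iJ(e_i)-\Mod$. This finite set depends only on $m$. The role of $d$ is only to specify which group we are talking about; the count does not see it further, but we keep it in the statement since $G$ is determined by $m$ and $D$.

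The main obstacle is the infinite cardinality claim: we must check that for every shape the set of blocks is exactly countably infinite, never finite, independently of $F$ and $D$. I would argue nonemptiness and infinitude via Jacquet--Langlands together with the local Langlands correspondence. Supercuspidals of $\GL_n(D)$ correspond to discrete series of $\GL_{nd}(F)$, and these include, for example, $\St_k(\tau)$ for supercuspidal $\tau$ of $\GL_{nd/k}(F)$ with $k$ suitable. Infinitely many inertially distinct $\tau$ exist, for instance by twisting by ramified characters of unbounded conductor. Countability follows because there are countably many types. A secondary point is that the Morita equivalences of Proposition \ref{prop J Morita direct sum} are equivalences of module categories compatible with the product over blocks; this is immediate from the block decomposition. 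Note also that the result is stated for categories rather than rings, so the factors $\End_\C(\rho_\sS)$, which do depend on $F$, are harmless.
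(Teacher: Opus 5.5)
Your proposal is correct and follows essentially the paper's route: you pass through the Morita description of Proposition~\ref{prop J Morita direct sum}(b), observe that each block contributes a factor depending only on the $e(i,\sS)$, and use that inertial classes are countably infinite independently of $F$. Your per-shape count (every realizable multiset of $e_i$'s occurs for countably infinitely many blocks) spells out a step the paper's one-line proof leaves implicit, and it indeed shows that the category does not even depend on $d$.
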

\begin{proof}
The number of inertial classes is countably (by Harish-Chandra's finiteness theorem) infinite independently of $F$. Hence the RHS of
Proposition \eqref{prop J Morita direct sum} does not depend on $D$ except via $m$ and $d$.
\end{proof}
Of course, the independence in Proposition \ref{prop J indep of F GL(n)} cannot in general be made compatible with the inclusion of the Hecke algebra; c.f. \cite[Ex. 4.5, Prop. 4.6]{Yan}.


%
%

\subsection{Cell poset and monodromy}
By construction, if $H=H(X^*,\Phi, X_*,\Phi^\vee)$ is an equal-parameter affine Hecke algebra, the corresponding asymptotic Hecke algebra $J(H)$ admits a direct sum decomposition $J(H)=\bigoplus_\cc J_\cc$ indexed by two-sided Kazhdan-Lusztig cells. Lusztig showed in \cite{cellsIV} that this decomposition may be equivalently indexed as $J(H)=\bigoplus_{N} J_N$ by nilpotent conjugacy classes for the complex group $G^\vee$ dual to (thanks to our conventions) $(X^*,\Phi, X_*,\Phi^\vee)$.

Keeping $G^\vee$ as above, let temporarily $G$ be the split group over $F$ corresponding to $(X^*,\Phi, X_*,\Phi^\vee)$. For the principal block of $G(F)$,
the Payley-Wiener realization of $J(H)$ given by \cite{BK} implies that the $G(F)$-representations corresponding to simple $J_N$-modules are precisely the standard representations whose Langlands quotients have $L$-parameter with monodromy $N$. 

We now return to $G$ being an inner form of $\GL_n$, and generalize the above decomposition to $\J(G)$. By Theorem \ref{thm kenta} and in the notation of Definition \ref{dfn H(s) J(s) C(c)} and Section \ref{subsubsubsection reducible root data},
\begin{equation}
\label{eqn J direct sum}
\J=\bigoplus_{\sS}J(\sS)\otimes\End_\C(\rho_\sS)=\bigoplus_\sS\bigoplus_{\cc\in\cc(\sS)} J_{\cc}\otimes\End_\C\rho_\sS).
\end{equation}
In this section, we will re-index this direct sum in the image of the previous paragraph.

Let $L=\prod_{i}\GL_{n_i}(D)^{\times e_i}$ and consider a Bernstein block with  
$\sS=[L, \otimes_i\sigma^{\otimes e_i}]_G$. 
By the compatibility of the types for $G$ and for $L$ with parabolic induction, for every intermediate Levi subgroup $L\subset M=\prod_{j}\GL_{m_j}^{r_j}\subset G$, essentially discrete series representation $\omega$ of $M(F)$, and non-strictly positive unramified character $\nu$  of $M(F)$, the standard $G(F)$-representation
\begin{equation}
\label{eqn standard for monodromy}
\pi(P, \omega,\nu)=i_{P_M}^G(\omega\otimes\nu)
\end{equation}
corresponds (thanks to Theorem \ref{thm SolleveldCompletion}) a standard $H(\sS)$-module
\begin{equation}
\label{eqn standard HM for monodromy}
\pi(H(\sS), P,\omega, \nu)=\bigotimes_i i_{H_{P_i}}^{H(e_i)}\left(\bigotimes_j\St_{i,j}(\nu_{i,j}\right),
\end{equation}
induced from the affine Hecke algebra $H(\sS_M)=\bigotimes_i H_{P_i}$, again a tensor product of type $\GL$ affine Hecke algebras, with each $H_{P_i}$ a parabolic subalgebra of $H(e_i)$. Note that we can define standard modules as certain parabolic inductions of essentially discrete-series representations as opposed to general essentially-tempered representations because unitary inductions of discrete series representations are irreducible for inner forms of $\GL_n$ \cite{DKV, Bad}.

As $\nu$ varies, the $L$-parameters of the Langlands quotients $J(\pi(P,\omega,\nu))$ all have fixed monodromy, say $N_{M,\omega}$, by the compatibility of the Langlands classification and the local Langlands correspondence \cite{HT, Henniart, DKV,  Bad02, Bad08}. On the other hand, $\pi(H(\sS), P,\omega, \nu)$ extends to a simple $J(\sS)$-module, annihilated by all two-sided cells except
\[
J_{\cc_1}\otimes\cdots\otimes J_{\cc_k}
\]
where $\cc_j$ is the summand of $J(H(e_i))$ acting (in particular) on tempered modules induced from the Steinberg module of $H_{P_j}$ (this correspondence is the series of bijections \cite[(2.2), (2.3), (2.4)]{Plancherel}). This shows (c.f. \ref{rem sc H=J})
\begin{prop}
\begin{enumerate}
\item 
There is a bijection between nilpotent conjugacy classes for $S_{\phi_\sS}$, two-sided cells for $J(\sS)$, and the monodromies of simple representations in $\Rep(G)_\sS$.
\item
We have a direct sum decomposition
\[
\J(G)=\bigoplus_N \J(G)_N
\]
indexed by nilpotent conjugacy classes for $G^\vee$.
\end{enumerate}
\end{prop}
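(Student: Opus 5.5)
The plan is to work one Bernstein block at a time, using the decomposition \eqref{eqn J direct sum} together with Lusztig's cell/nilpotent bijection, which is available for each tensor factor $J(H(e_i))$ of type $\GL_{e_i}$. For part (1), fix $\sS$ and write $S_{\phi_\sS}\simeq\prod_i\GL_{e_i}$ as in Section \ref{subsection Formulas for GL(n,D)}. Nilpotent classes of $S_{\phi_\sS}$ are tuples $(N_i)_i$ of nilpotent classes of the $\GL_{e_i}$, hence tuples of partitions of the $e_i$. By Section \ref{subsubsubsection reducible root data}, two-sided cells of $\Waff(\sS)=\prod_i\Waff(\GL_{e_i})$ are products $\prod_i\cc_i$. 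By Lusztig \cite{cellsIV} (equivalently Shi/Xi in type $\tilde A$), the two-sided cells of each factor are in bijection with nilpotent classes of $\GL_{e_i}$, i.e. with partitions of $e_i$. Taking products gives the first bijection.

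For the second bijection I would use Corollary \ref{cor simple J modules} and the discussion preceding the statement. Each irreducible $\pi\in\Rep(G)_\sS$ is the Langlands quotient of a unique standard module $\pi(P,\omega,\nu)$ as in \eqref{eqn standard for monodromy}. By Theorem \ref{thm SolleveldCompletion}, this corresponds to the standard module \eqref{eqn standard HM for monodromy}. That module is simple over $J(\sS)$ and is acted on only by $J_{\cc_1}\otimes\cdots\otimes J_{\cc_k}$, where $\cc_j$ is the cell attached to the Steinberg module of the parabolic subalgebra $H_{P_j}$, i.e. to the partition of $e_i$ recording the block sizes of $M$ in the $i$-th factor. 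On the Galois side, the monodromy of $\phi_\pi$ is obtained from $\omega$ by the Zelevinsky/Langlands--Jacquet recipe, via \cite{HT, Henniart, DKV, Bad02, Bad08}: a segment of length $m$ built on $\sigma_i$ contributes a Jordan block of size $m$ in the $\GL_{e_i}$-factor of $S_{\phi_\sS}$, after identifying $\phi_{\sigma_i}^{\oplus e_i}$ with $\C^{e_i}\otimes\phi_{\sigma_i}$. So the monodromy, viewed in $\Lie S_{\phi_\sS}$, is the nilpotent class with the same tuple of partitions. I would check that the three bijections are compatible, i.e. that the cell acting on the Steinberg-induced module is the one labelled by the same partition; this is \cite[(2.2)--(2.4)]{Plancherel}. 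Surjectivity onto all monodromies is then clear, since every tuple of partitions is realized by some $M\supset L$ and some discrete $\omega$.

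For part (2), I would group the summands of \eqref{eqn J direct sum}. For each $\sS$ and each $\cc\in\cc(\sS)$, part (1) gives a nilpotent class $N_\cc$ of $S_{\phi_\sS}$. Its image under $S_{\phi_\sS}\subset G^\vee=\GL_n(\C)$ is a $G^\vee$-class $N$: each Jordan block of size $m$ in the $i$-th factor becomes $n_i$ Jordan blocks of size $m$, after tensoring with $\phi_{\sigma_i}$ of dimension $n_i d$ up to the inner-form normalization. Set $\J(G)_N:=\bigoplus_{(\sS,\cc)\,:\,N_\cc\mapsto N}J_\cc\otimes\End_\C(\rho_\sS)$. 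These are two-sided ideals summing directly to $\J$, because \eqref{eqn J direct sum} is already a decomposition into ideals and we only regroup it. The main obstacle is making part (1) intrinsic and independent of choices: the identification of $\omega$'s segments with Jordan blocks of $S_{\phi_\sS}$ must agree with the partition labelling cells through the types. I would handle this by checking that both sides are determined by the essentially discrete support $(M,\omega)$, which Theorem \ref{thm SolleveldCompletion} and Theorem \ref{thm BushnellKutzko}(b) transport faithfully. For the supercuspidal case $e=1$, consistency is Example \ref{rem sc H=J}: there is a single cell and $N=0$.
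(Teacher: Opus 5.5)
Your proposal is correct and follows essentially the paper's argument. Standard $G(F)$-representations are matched via Theorem \ref{thm SolleveldCompletion} with the Steinberg-induced $H(\sS)$-modules \eqref{eqn standard HM for monodromy}, which are attached to two-sided cells via \cite[(2.2)--(2.4)]{Plancherel}; monodromy is constant along each such family by compatibility of the Langlands classification with the local Langlands correspondence; and part (2) regroups the ideal decomposition \eqref{eqn J direct sum}, where your version (summing over all pairs $(\sS,\cc)$ with a given image) is slightly cleaner than the paper's. The only caveat is your explicit Jordan-block count for $d>1$: it is not right as stated, since the parameter of a supercuspidal of $\GL_{n_i}(D)$ can itself carry monodromy $\mathrm{Sp}(s)$, but nothing depends on it, because (2) only needs each cell to determine a well-defined $G^\vee$-class.
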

\begin{proof}
The bijection in the first statement was shown above. 

For the second statement, define,
\[
\J(G)_N=\bigoplus_{\sS} \End_\C(\rho_\sS)\otimes\bigotimes_i J(\sS)_{\cc_i},
\]
where if $H(\sS)=\bigotimes_iH(e_i)$ as above, $J(\sS)_{\cc_i}$ is a two-sided cell summand in the tensor factor $J(e_i)$, such that the simple $\bigotimes_i J(\sS)_{\cc_i}$-modules \eqref{eqn standard HM for monodromy} correspond to standard modules \eqref{eqn standard for monodromy} with (simple quotients having) monodromy $N$. This gives a decomposition of $\J(G)$ as a direct sum of two-sided ideals; if $j$ and $j'$ belong to different summands, then by the bijection of (1), any of their components under \eqref{eqn J direct sum} corresponding to a common block are spanned by disjoint sets of two-sided cells of $J(\sS)$. (Note that every $N$ is achieved already for the principal block.)
\end{proof}

\begin{rem}
A key feature of Lusztig's bijection of two-sided cells in $W_G\ltimes X^*(T^\vee)$ and nilpotent conjugacy classes for $G^\vee$ for the principal block is that if $\cc$ corresponds to $N(\cc)$, then $a(\cc)=\dim_\C\Bb_{N(\cc)}$, where $a$ is Lusztig's $a$-function and $B_{N(\cc)}$ is the corresponding Springer fibre. We plan to treat this phenomenon in general in future work.
\end{rem}

\bibliography{J_for_GLn_biblio.bib}

\end{document}

%% file: preamble.tex
\newcommand{\pair}[2]{\left\langle #1 , #2\right\rangle}

\def\into{\mathrel{\hookrightarrow}}

\newcommand{\sets}[2]{\left\{#1\,\middle|\,#2\right\}}

\newcommand{\bangles}[1]{\left\langle #1 \right\rangle}
\newcommand{\sett}[1]{\left\{#1\right\}}

\newcommand{\stab}[2]{\mathrm{Stab}_{#1}(#2)}

\newcommand{\rquotient}{\backslash}

\DeclareMathOperator{\End}{End}

\newcommand{\HomOver}[3]{\mathrm{Hom}_{#1}(#2, #3)}

\DeclareMathOperator{\Mat}{Mat}

\newcommand{\id}{\mathrm{id}}
\newcommand{\opp}{^\mathrm{opp}}
\DeclareMathOperator{\Ind}{Ind}

\newcommand{\Res}[2]{\mathrm{Res}_{#2}^{#1}}

\newcommand{\Rep}{\mathbf{Rep}}
\newcommand{\Mod}{\mathbf{Mod}}
\newcommand{\Perf}{\mathrm{Perf}}
\newcommand{\Coh}{\mathrm{Coh}}
\newcommand{\QCoh}{\mathrm{QCoh}}
\newcommand{\IndCoh}{\mathrm{IndCoh}}

\newcommand{\git}{\mathbin{
  \mathchoice{/\mkern-6mu/}
    {/\mkern-6mu/}
    {/\mkern-5mu/}
    {/\mkern-5mu/}}}

\newcommand{\Bb}{\mathcal{B}}

\newcommand{\pt}{\mathrm{pt}}

\newcommand{\Bun}{\mathrm{Bun}}

\newcommand{\X}{\mathcal{X}}
\newcommand{\HH}{\mathbf{H}}

\newcommand{\Ff}{\mathcal{F}}

\newcommand{\Ee}{\mathcal{E}}

\newcommand{\norm}[1]{\left\| #1 \right\|}
\newcommand{\dInt}[2]{\int {#1} \,\mathrm{d} {#2}}
\newcommand{\dIntOver}[3]{\int_{#1} {#2} \,\mathrm{d} {#3}}

\DeclareMathOperator{\supp}{supp}
\DeclareMathOperator{\vol}{vol}

\newcommand{\trace}[2]{\mathrm{trace}\left({#1}\,,{#2}\right)}

\newcommand{\R}{\mathbb{R}}

\newcommand{\Z}{\mathbb{Z}}
\newcommand{\C}{\mathbb{C}}
\newcommand{\Q}{\mathbb{Q}}

\newcommand{\F}{\mathbb{F}}
\newcommand{\Oo}{\mathcal{O}}

\newcommand{\g}{\mathfrak{g}}

\newcommand{\ssl}{\mathfrak{sl}}

\newcommand{\Nn}{\tilde{\mathcal{N}}}

\newcommand{\sS}{\mathfrak{s}}

\newcommand{\Waff}{W_{\mathrm{aff}}}

\newcommand{\cc}{\mathbf{c}}

\newcommand{\GL}{\mathrm{GL}}

\newcommand{\SL}{\mathrm{SL}}

\newcommand{\Gm}{{\mathbb{G}_\mathrm{m}}}

\newcommand{\Aut}{\mathrm{Aut}}
\newcommand{\Sn}{\mathfrak{S}}

\newcommand{\St}{\mathrm{St}}
\newcommand{\triv}{\mathrm{triv}}
\newcommand{\sgn}{\mathrm{sgn}}

\newcommand{\bq}{\mathbf{q}}

\newtheorem{theorem}{Theorem}
\newtheorem*{theorem*}{Theorem}
\newtheorem{cor}{Corollary}
\newtheorem{prop}{Proposition}
\newtheorem{lem}{Lemma}

\theoremstyle{definition}
\newtheorem{dfn}{Definition}
\newtheorem*{dfn*}{Definition}
\theoremstyle{remark}
\newtheorem{ex}{Example}
\newtheorem*{ex*}{Example}
\newtheorem{rem}{Remark}